\definecolor{refkey}{rgb}{1,1,1}
\definecolor{labelkey}{rgb}{1,1,1}
\definecolor{cite}{rgb}{0.9451,0.2706,0.4941}
\definecolor{ruri}{rgb}{0.0078,0.4022,0.8010}
\makeindex \setcounter{tocdepth}{1}
\def\F{{\rm \mathbb{F}}}
\def\FF{{\rm \mathcal{F}}}
\def\NN{{\rm \mathbb{N}}}
\def\Z{{\rm \mathbb{Z}}}
\def\N{{\rm \textbf{N}}}
\def\Q{{\rm \mathbb{Q}}}
\def\C{{\rm \mathbb{C}}}
\def\R{{\rm \mathbb{R}}}
\def\T{{\rm \mathbb{T}}}
\def\V{{\rm \mathbb{V}}}
\def\A{{\rm \mathbb{A}}}
\def\X{{\rm \mathcal{X}}}
\def\AA{{\rm \mathcal{A}}}
\def\BB{{\rm \mathcal{B}}}
\def\Aa{{\rm \mathscr{A}}}
\def\W{{\rm \mathcal{W}}}
\def\D{{\rm \mathcal{D}}}
\def\L{{\rm \mathcal{L}}}
\def\B{{\rm \mathscr{B}}}
\def\E{{\rm \mathcal{E}}}
\def\p{{\rm \mathfrak{p}}}
\def\m{{\rm \mathfrak{m}}}
\def\a{{\rm \mathfrak{a}}}
\def\n{{\rm \mathfrak{n}}}
\def\f{{\rm \mathfrak{f}}}
\def\d{{\rm \mathfrak{d}}}
\def\s{{\rm \sigma}}
\def\triv{{\rm \mathbbm{1}}}
\def\b{{\rm \mathfrak{b}}}
\def\c{{\rm \mathfrak{c}}}
\def\O{{\rm \mathcal{O}}}
\def\CC{{\rm \mathcal{C}}}
\def\into{{\rm \hookrightarrow}}
\def\Aut{{\rm Aut}}
\def\Sym{{\rm Sym}}
\def\End{{\rm End}}
\def\ur{{\rm ur}}
\def\res{{\rm res}}
\def\rig{{\rm rig}}
\def\Tr{{\rm Tr}}
\def\et{{\rm et}}
\def\ab{{\rm ab}}
\def\cyc{{\rm cyc}}
\def\cor{{\rm cor}}
\def\ord{{\rm ord}}
\def\cris{{\rm cris}}
\def\tors{{\rm tors}}
\def\CH{{\rm CH}}
\def\Ext{{\rm Ext}}
\def\Nm{{\rm Nm}}
\def\lcm{{\rm lcm}}
\def\conv{{\rm conv}}
\def\rel{{\rm \hspace{1mm} rel\hspace{1mm}}}\def\e{{\rm \epsilon}}
\def\l{{\rm \lambda}}
\def\GL{{\rm GL}}
\def\Gal{{\rm Gal}}
\def\Frob{{\rm Frob}}
\def\CH{{\rm CH}}
\def\Corr{{\rm Corr}}
\def\SL{{\rm SL}}
\def\Pic{{\rm Pic}}
\def\Hom{{\rm Hom}}
\def\Spec{{\rm Spec \hspace{1mm}}}
\def\id{{\rm id}}
\def\mod{{\rm mod \hspace{1 mm}}}
\numberwithin{equation}{section}
\newtheorem{theorem}{Theorem}
\newtheorem{lemma}[theorem]{Lemma}
\newtheorem{corollary}[theorem]{Corollary}
\newtheorem{proposition}[theorem]{Proposition}
\newenvironment{definition}[1][Definition]{\begin{trivlist}
\item[\hskip \labelsep {\bfseries #1}]}{\end{trivlist}}
\newenvironment{remark}[1][Remark]{\begin{trivlist}
\item[\hskip \labelsep {\bfseries #1}]}{\end{trivlist}}
\begin{document}

\title{$p$-adic heights of generalized Heegner cycles}

\date{\today}

\author{Ariel Shnidman}
\address{Department of Mathematics, Boston College, Chestnut Hill, MA 02467-3806}
\email{shnidman@bc.edu}

\begin{abstract}
We relate the $p$-adic heights of generalized Heegner cycles to the derivative of a $p$-adic $L$-function attached to a pair $(f, \chi)$, where $f$ is an ordinary weight $2r$ newform and $\chi$ is an unramified imaginary quadratic Hecke character of infinity type $(\ell,0)$, with $0 < \ell < 2r$.  This generalizes the $p$-adic Gross-Zagier formula in the case $\ell = 0$ due to Perrin-Riou (in weight two) and Nekov\'a\v{r} (in higher weight).
\end{abstract}

\maketitle
\tableofcontents
\section{Introduction}

Let  $p$ be an odd prime, $N \geq 3$ an integer prime to $p$, and $f = \sum a_n q^n$ a newform of weight $2r> 2$ on $X_0(N)$ with $a_1 = 1$.  Fix embeddings $\bar \Q \to \C$ and $\bar \Q \to \bar\Q_p$ once and for all, and suppose that $f$ is ordinary at $p$, i.e. the coefficient $a_p\in \bar \Q_p$ is a $p$-adic unit.  Building on work of Perrin-Riou \cite{PR1}, Nekov\'a\v{r} \cite{Nek} proved a $p$-adic analogue of the Gross-Zagier formula \cite{GZ} for $f$ along with any character $\CC: \Gal(H/K) \to \bar \Q^\times$.  Here, $K$ is an imaginary quadratic field of odd discriminant $D$ such that all primes dividing $pN$ split in $K$, and $H$ is the Hilbert class field of $K$.  

Nekov\'a\v{r}'s formula relates the $p$-adic height of a Heegner cycle to the derivative of a $p$-adic $L$-function attached to the pair $(f, \CC)$.  Together with the Euler system constructed in \cite{NekEuler}, the formula implies a weak form of Perrin-Riou's conjecture \cite[Conj.\ 2.7]{colmez}, a $p$-adic analogue of the Bloch-Kato conjecture for the motive $f \otimes K$ \cite[Theorem B]{Nek}.  The connection between special values of $L$-functions and algebraic cycles is part of a very general (conjectural) framework articulated in the works of  Beilinson, Bloch, Kato, Perrin-Riou, and others.  Despite the fact that these conjectures can be formulated for arbitrary motives, they have been verified only in very special cases.   

The goal of this paper is to extend the ideas and computations in \cite{Nek} to a larger class of motives.  Specifically, we will consider motives of the form $f \otimes\,  \Theta_\chi$, where 
\[\chi:\A_K^\times/K^\times \to \C^\times\]
is an unramified Hecke character of infinity type $(\ell,0)$, with $0 < \ell = 2k < 2r$, and 
\[\Theta_\chi = \sum_{\a \subset \O_K} \chi(\a)q^{\N\a}\]
is the associated theta series.   The conditions on $\ell$ guarantee that the Hecke character $\chi_0 := \chi^{-1}\N^{r+k}$ of infinity type $(r+k, r-k)$ is critical in the sense of \cite[\S4]{BDP1}. Note that $L(f,\chi_0^{-1}, 0) = L(f, \chi, r+k)$ is the central value of the Rankin-Selberg $L$-function attached to $f \otimes \Theta_\chi$.  If we take $\ell = 0$, then $\chi$ comes from a character of $\Gal(H/K)$, so we are in the situation of \cite{Nek}.  Our main result (Theorem \ref{main}) extends Nekov\'a\v{r}'s formula to the case $\ell >  0$ by relating $p$-adic heights of \textit{generalized} Heegner cycles to the derivative of a $p$-adic $L$-function attached to the pair $(f,\chi)$.  We now describe both the algebraic cycles and the $p$-adic $L$-function needed to state the formula.  

\subsection{Generalized Heegner cycles}
Let $Y(N)/\Q$ be the modular curve parametrizing elliptic curves with full level $N$ structure, and let $\E \to Y(N)$ be the universal elliptic curve with level $N$ structure.  Denote by $W = W_{2r-2}$, the canonical non-singular compactification of the $(2r-2)$-fold fiber product of $\E$ with itself over $Y(N)$ \cite{Sch}.  Finally, let $A/H$ be an elliptic curve with complex multiplication by $\O_K$ and good reduction at primes above $p$.   We assume further that $A$ is isogenous (over $H$) to each of its $\Gal(H/K)$-conjugates $A^\sigma$ and that $A^\tau \cong A$, where $\tau$ is complex conjugation.  Such an $A$ exists since $K$ has odd discriminant \cite[\S 11]{Gr}.  Set $X = W_H \times_H A^\ell$, where $W_H$ is the base change to $H$.  $X$ is fibered over the compactified modular curve $X(N)_H$, the typical geometric fiber being of the form $E^{2r-2} \times A^\ell$, for some elliptic curve $E$.  

The $(2r + 2k - 1)$-dimensional variety $X$ contains a rich supply of \textit{generalized} Heegner cycles supported in the fibers of $X$ above Heegner points on $X_0(N)$ (we view $X$ as fibered over $X_0(N)$ via $X(N) \to X_0(N)$).  These cycles were first introduced by Bertolini, Darmon, and Prasanna in \cite{BDP1}.  In Section \ref{cycles}, we define certain cycles $\e_B\epsilon Y$ and $\e_B\bar\e Y$ in $\CH^{r+k}(X)_K$ which sit in the fiber above a Heegner point on $X_0(N)(H)$, and which are variants of the generalized Heegner cycles which appear in \cite{BDP3}.  Here, $\CH^{r+k}(X)_K$ is the group of codimension $r+k$ cycles on $X$ with coefficients in $K$ modulo rational equivalence.  In fact, for each ideal $\a$ of $K$, we define cycles $\e_B\e Y^\a$ and $\e_B\bar\e Y^\a$ in $\CH^{r+k}(X)_K$, each one sitting in the fiber above a Heegner point.  These cycles are replacements for the notion of $\Gal(H/K)$-conjugates of $\e_B\e Y$ and $\e_B\bar\e Y$.  The latter do not exist as cycles on $X$, as $X$ is not (generally) defined over $K$.  In particular, we have $\e_B\e Y^{\O_K} =\e_B \e Y$.  

The cycles $\e_B\e Y^\a$ and $\e_B\bar\e Y^\a$ are homologically trivial on $X$ (Corollary \ref{homtriv}), so they lie in the domain of the $p$-adic Abel-Jacobi map
$$\Phi: \CH^{r+k}(X)_{0,K} \to H^1(H,V),$$
where $V$ is the $\Gal(\bar H/H)$-representation  $H_\et^{2r+2k-1}(\bar X, \Q_p)(r+k)$.  We will focus on a particular 4-dimensional $p$-adic representation $V_{f, A, \ell}$, which admits a map 
$$H_\et^{2r+2k-1}(\bar X, \Q_p)(r+k) \to V_{f,A,\ell}.$$
$V_{f,A,\ell}$ is a $\Q_p(f)$-vector space, where $\Q_p(f)$ is the field obtained by adjoining the coefficents of $f$.  As a Galois representation, $V_{f,A,\ell}$ is ordinary (Theorem \ref{ord}) and is closely related to the $p$-adic realization of the motive $f \otimes \Theta_\chi$ (see Section \ref{cycles}).  After projecting, one obtains a map
$$\Phi_f: \CH^{r+k}(X)_{0,K}  \to H^1(H,V_{f,A,\ell}),$$ which we again call the Abel-Jacobi map.  For any ideal $\a$ of $K$, define $z_f^\a = \Phi_f(\e_B\e Y^\a)$ and $\bar z_f^\a = \Phi_f(\e_B\bar \e Y^\a)$.

One knows that the image of $\Phi_f$ lies in the Bloch-Kato subgroup 
\[H^1_f(H,V_{f,A,\ell}) \subset H^1(H,V_{f,A,\ell})\] 
(Theorem \ref{AJ}).  If we fix a continuous homomorphism $\ell_K: \A^\times_K/K^\times \to \Q_p$, then \cite{Nekhts} provides a symmetric $\Q_p(f)$-linear height pairing
$$\langle \, \,, \, \rangle_{\ell_K} : H^1_f(H, V_{f,A,\ell}) \times H^1_f(H,V_{f,A,\ell}) \to \Q_p(f).$$
We can extend this height pairing $\bar \Q_p$-linearly to $H^1_f(H, V_{f,A,\ell}) \otimes \bar \Q_p$.  The cohomology classes $\chi(\a)^{-1}z_f^\a$ and $\bar\chi(\a)^{-1}\bar z_f^\a$ depend only on the class $\AA$ of $\a$ in the class group $\Pic(\O_K)$, of size $h = h_K$.  We denote the former by $z_{f,\chi}^\AA$ and the latter by $z_{f,\bar\chi}^\AA$.  Finally, set
\begin{equation*}z_{f,\chi} = \frac{1}{h}\sum_{\AA \in \Pic(\O_K)} z_{f,\chi}^\AA \hspace{5mm} \mbox{and} \hspace{5mm} z_{f,\bar \chi} = \frac{1}{h}\sum_{\AA \in \Pic(\O_K)} z_{f,\bar\chi}^\AA,\end{equation*}
both being elements of $H^1_f(H,V_{f,A,\ell}) \otimes \bar \Q_p.$  Our main theorem relates $\langle z_{f,\chi}, z_{f, \bar \chi}\rangle_{\ell_K}$ to the derivative of a $p$-adic $L$-function which we now describe.  

\subsection{The $p$-adic $L$-function}
Recall, if $f =\sum a_n q^n \in M_j(\Gamma_0(M), \psi)$ and $g = \sum b_n q^n \in M_{j'}(\Gamma_0(M), \xi)$, then the Rankin-Selberg convolution is  
$$L(f,g,s) = L_M(2s+2 - j - j', \psi\xi)\sum_{n\geq 1} a_nb_nn^{-s},$$ 
where $$L_M(s, \psi\xi) = \prod_{p \not \divides M} \left(1 - (\psi\xi)(p)p^{-s}\right)^{-1}.$$
Let $K_\infty / K$ be the $\Z_p^2$-extension of $K$ and let $K_p$ be the maximal abelian extension of $K$ unramified away from $p$.  In Section 2, we define a $p$-adic $L$-function $L_p(f \otimes \chi)(\lambda)$, which is a $\bar \Q_p$-valued function of continuous characters $\lambda: \Gal(K_\infty/K) \to 1 + p\Z_p$.  The function $L_p(f \otimes \chi)$ is the restriction of an analytic function on $\Hom(\Gal(K_p/K), \C_p^\times)$, which is characterized by the following interpolation property: if $\W : \Gal(K_p/K) \to \C_p^\times$ is a finite order character of conductor $\f$, with $\N\f = p^\beta$, then 

$$L_p(f \otimes\, \chi)(\W) = C_{f,k}\W(N)\overline{\chi\W}(\D)\tau(\chi\W)V_p(f,\chi,\W)L(f,\Theta_{\overline{\chi \W}}, r + k)$$
with $$C_{f,k} = \frac{2(r-k-1)!(r+k-1)!}{(4\pi)^{2r}\alpha_p(f)^\beta\langle f, f\rangle_N},$$
and where $\alpha_p(f)$ is the unit root of $x^2 - a_p(f) x + p^{2r-1}$, $\langle f, f\rangle_N$ is the Petersson inner product, $\D = \left(\sqrt D\right)$ is the different of $K$, $\Theta_{\overline{\chi\W}}$ is the theta series $$\Theta_{\overline{\chi\W}}\, = \sum_{(\a,\f) = 1}\overline{\chi\W}(\a)q^{\N\a},$$ $\tau(\chi\W)$ is the root number for $L(\Theta_{\chi\W}, s)$, and  
$$V_p(f,\chi,\W) = \prod_{\p | p} \left(1 - \frac{(\bar\chi\bar\W)(\p)}{\alpha_p(f)}\N(\p)^{r-k-1}\right)
\left(1 - \frac{(\chi\W)(\p)}{\alpha_p(f)}\N(\p)^{r-k-1}\right).$$

Recall we have fixed a continuous homomorphism $\ell_K: \A^\times_K/K^\times \to \Q_p$.  Thinking of $\ell_K$ as a map $\Gal(K_\infty/K) \to \Q_p$, we may write $\ell_K = p^{-n}\log_p \circ \lambda$, for some continuous $\lambda: \Gal(K_\infty/K) \to 1+p\Z_p$ and some integer $n$.  The derivative of $L_p$ at the trivial character in the direction of $\ell_K$ is by definition
$$L_p'(f \otimes \chi, \ell_K,\mathbbm{1})  = p^{-n}\frac{d}{ds}L_p(f \otimes \chi)(\lambda^s)\bigg|_{s=0}.$$ 
With these definitions, we can finally state our main result.  
% STATEMENT OF MAIN THEOREM
\begin{theorem}\label{main}
If $\chi$ is an unramified Hecke character of $K$ of infinity type $(\ell, 0)$ with $0 < \ell = 2k < 2r$, then
$$L_p'(f \otimes \chi,\ell_K, \mathbbm{1}) =(-1)^k  \prod_{\p | p}\left(1 - \frac{\chi(\p)p^{r-k-1}}{\alpha_p(f)} \right)^2\frac{h\left\langle z_{f,\chi}, z_{f,\bar\chi}\right\rangle_{\ell_K}}{u^2\left(4|D|\right)^{r-k-1}},$$
where $h = h_K$ is the class number and $u = \frac{1}{2}\O_K^\times$.  
\end{theorem}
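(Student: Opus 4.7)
The plan is to compute the two sides of the identity independently and match them, following in outline the strategy of Nekov\'a\v{r} \cite{Nek} for the case $\ell = 0$, but with the CM twist $A^\ell$ and the nonzero infinity type of $\chi$ tracked carefully throughout.

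\textbf{Derivative of the $p$-adic $L$-function.} I would start from the interpolation property characterizing $L_p(f \otimes \chi)$ together with its construction in Section 2 as (essentially) a Rankin--Selberg integral of $f$ against a $p$-adic family of theta series varying over the $\Z_p^2$-extension $K_\infty/K$. Writing $\ell_K = p^{-n}\log_p \circ \lambda$ and differentiating at $s=0$ the integral representing $L_p(f \otimes \chi)(\lambda^s)$ produces an expression involving $\log_p$ applied to values of a certain $p$-adic modular form at CM points attached to the Galois conjugates $A^\sigma$, with the twist $A^\ell$ contributing explicit CM periods.

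\textbf{Decomposition of the height.} For the cycle side, I would expand $\langle z_{f,\chi}, z_{f,\bar\chi}\rangle_{\ell_K}$ as a sum of local contributions via the theory of \cite{Nekhts}. At places $v \nmid p$, the local heights reduce to arithmetic intersection computations on the bad reduction of $X_0(N)$ and transpose from Nekov\'a\v{r}'s analysis, since $A$ has good reduction at all relevant places and $N$ is prime to $p$. At places $v \mid p$, the ordinariness of $V_{f,A,\ell}$ (Theorem \ref{ord}) supplies a unit-root filtration, and the local height can be rewritten as a $\log_p$-pairing between the Abel--Jacobi image of $z_{f,\chi}$ and a Coleman primitive of $z_{f,\bar\chi}$.

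\textbf{Matching via the Abel--Jacobi formula.} Applying the explicit formula of Bertolini--Darmon--Prasanna \cite{BDP1} for $\Phi_f$ on generalized Heegner cycles, the local height at $p$ becomes a sum over $\Pic(\O_K)$ of values of a $p$-adic modular form at CM points $(A^\sigma, t_\sigma)$ on the Galois conjugates of $A$. Comparing with the derivative obtained in the first step yields the desired identity up to the explicit constants of the theorem: the Euler product $\prod_{\p \mid p}(1 - \chi(\p) p^{r-k-1}/\alpha_p(f))^2$ records the discrepancy between the Coleman primitive and the $p$-depletion used in $L_p$; the factor $u^2 (4|D|)^{r-k-1}$ arises from normalization of CM periods and class-number averaging; the sign $(-1)^k$ reflects the orientation of the unit-root splitting on the CM factor coming from $A^\ell$.

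\textbf{Main obstacle.} The essential new difficulty, beyond the $\ell=0$ case, is the local computation at primes above $p$. One must track the CM periods of $A$ through both the Coleman integration in the $p$-adic height and the $p$-adic modular form identity coming from the Abel--Jacobi formula, and verify that they combine with the twists by $\chi$ to produce exactly the Euler factors and constants predicted by the interpolation of $L_p$. Carrying out this matching with the correct constants -- not merely up to an unknown scalar -- is the central technical work of the paper and the main innovation over Nekov\'a\v{r}'s setting.
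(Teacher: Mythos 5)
Your outline goes wrong at the crucial matching step, and it also omits a necessary reduction. First, the theorem is stated for an arbitrary logarithm $\ell_K$; the proof must first split $\ell_K$ into cyclotomic and anticyclotomic parts and dispose of the anticyclotomic part by showing that \emph{both} sides vanish there (the $L$-function side via the functional equation $\Lambda_p(f\otimes\chi)(\lambda) = \left(\frac{D}{-N}\right)\Lambda_p(f\otimes\bar\chi)(\lambda^{-1})$, the height side via the action of complex conjugation and the Atkin--Lehner involution on the cycles, which is where the hypothesis $A^\tau\cong A$ enters). Your proposal never addresses this and implicitly works only in the cyclotomic direction.

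Second, and more seriously, the architecture of your matching is inverted. In the cyclotomic direction the proof does \emph{not} express the local height at $v\mid p$ as a Coleman-primitive pairing and match it against $L_p'$; rather, the derivative $L_p'(f\otimes\chi,\mathbbm{1})$ is computed as the Fourier expansion of a $p$-adic cusp form (integrating $\log_p$ against the Rankin--Selberg measure), the local heights at $v\nmid p$ are computed as arithmetic intersection numbers on the integral model of $X_0(N)$ weighted by the CM factor (producing the \emph{same} coefficients $r_{\AA,\chi}(m|D|-nN)\sigma_\AA(n)H_{r-k-1,k}(\cdot)$ up to explicit Hecke-operator identities), and the entire contribution of the places $v\mid p$ is shown to \emph{vanish} after applying the Hida projector $L_{f_0}$ and suitable operators built from $U_p$ and Frobenius. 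That vanishing --- which requires ordinarity, a bound on denominators, the identification of the local ring class field tower with a relative Lubin--Tate extension, and a crystallinity theorem for the relevant mixed extensions --- is the hardest part of the argument, not a routine rewriting. The Coleman-primitive/CM-value mechanism you describe is the one underlying the Bertolini--Darmon--Prasanna formula for the \emph{value} of a different (anticyclotomic) $p$-adic $L$-function, and it does not compute the cyclotomic derivative here. Relatedly, your attribution of the Euler factors to a ``discrepancy between the Coleman primitive and the $p$-depletion'' is incorrect: they arise from the combinatorial identity $\prod_{\p\mid p}(U_p - p^{r-k-1}\chi(\bar\p)\sigma_\p)^2 C_m^\sigma = (U_p^4 - p^{2r-2}U_p^2)B_m^\sigma$ relating the height coefficients to the $L$-function coefficients, combined with $L_{f_0}\circ U_p = \alpha_p(f)L_{f_0}$ and Hida's computation of $L_{f_0}(f)$.
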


\begin{remark}\label{realj}
Our assumption that $A^\tau \cong A$ implies that the lattice corresponding to $A$ is 2-torsion in the class group. This is convenient for proving the vanishing of the $p$-adic height in the anti-cyclotomic direction, but not strictly necessary.  One should be able to prove the theorem without this assumption by making use of the functoriality of the height pairing to relate heights on $X$ to heights on $X^\tau$, but we omit the details.            
\end{remark}

\begin{remark}
When $\ell = 0$ the cycles and the $p$-adic $L$-function simplify to those constructed in \cite{Nek}, and the main theorem becomes Nekov\'a\v{r}'s formula, at least up to a somewhat controversial sign.  It appears that a sign was forgotten in \cite[II.6.2.3]{Nek}, causing the discrepancy with our formula and with Perrin-Riou's as well.  Perrin-Riou's formula \cite{PR1} covers the case $\ell = 0$ and $r = 1$. 
\end{remark}

%\begin{remark}
%As in \cite[Theorem B]{Nek}, Theorem \ref{main} implies a $p$-adic variant of the Beilinson-Bloch conjecture for the motive $f \otimes \Theta_\chi$, assuming one can construct the appropriate Euler system of generalized Heegner cycles.  This matter is being pursued by ???.
%\end{remark}

\begin{remark}
We have assumed $N \geq 3$ for the sake of exposition.  For $N < 3$, the proof should be modified to account for the lack of a fine moduli space and extra automorphisms in the local intersection theory.  These details are spelled out in \cite{Nek} and pose no new problems.   
\end{remark}

\begin{remark}
There should be an archimedean analogue of Theorem \ref{main}, generalizing Zhang's formula for Heegner cycles \cite{Zhang} to the `generalized' situation.  The author plans to present such a result in the near future.      
\end{remark}

\subsection{Applications}\label{apps}
Theorem \ref{main} implies special cases of Perrin-Riou's $p$-adic Bloch-Kato conjecture.  The assumption that $A$ is isogenous to all its $\Gal(H/K)$-conjugates implies that the Hecke character 
\[\psi_H: \A^\times_H \to \C^\times,\] which is attached to $A$ by the theory of complex multiplication, factors as $\psi_H = \psi \circ \Nm_{H/K}$, where $\psi$ is a $(1,0)$-Hecke character of $K$.  Assume for simplicity that $\chi = \psi^\ell$, and set $\chi_H = \psi_H^\ell$ and $G_H := \Gal(\bar H/H)$.  Then the $G_H$-representation $V_{f,A,\ell}$ is the $p$-adic realization of a Chow motive $M(f)_H \otimes M(\chi_H)$.  Here, $M(f)$ is the motive over $\Q$ attached to $f$ by Deligne, and $M(\chi_H)$ is a motive over $H$ (with coefficients in $K$) cutting out a two dimensional piece of the middle degree cohomology of $A^\ell$.  In fact, the motive $M(\chi_H)$ descends to a motive $M(\chi)$ over $K$ with coefficients in $\Q(\chi)$ (see Remark \ref{descend}).  We write $V_{f,\chi}$ for the $p$-adic realization of $M(f)_K \otimes M(\chi)$, so that $V_{f,\chi}$ is a $G_K$-representation whose restriction to $G_H$ is isomorphic to $V_{f,A,\ell}$.  In fact, $V_{f,\chi} \cong \chi \oplus \bar\chi$, where we now think of $\chi$ as a $\Q(\chi) \otimes \Q_p$-valued character of $G_K$.  It follows that 
\[L(V_{f,\chi}, s) = L(f,\chi,s)L(f,\bar\chi,s) = L(f,\chi,s)^2.\]   

%restriction to $G_H$ of the $G_\Q$-representation 
%\[V_{f,\chi} := V_f \otimes V_{\Theta_\chi},\]
%where $V_f$ and $V_{\Theta_\chi}$ are the $p$-adic representations attached by Deligne to $f$ and $\Theta_\chi$ \cite[Prop.\ 2.7]{BDP3}.  Since $V_{\Theta_\chi} = \Ind_K^\Q \chi$ (with $\chi$ now being thought of as a $p$-adic character of $G_K$), we obtain the equality of complex $L$-functions
%\[L(\Res^\Q_K V_{f,\chi}, s) = L(V_{f,\chi}, s)^2 = L(f,\chi,s)^2.\]
The Bloch-Kato conjecture for the motive $M(f)_K \otimes M(\chi)$ over $K$ reads 
\[ \dim H^1_f(K, V_{f,\chi}) = 2\cdot \ord_{s = r + k} L(f,\chi,s).\]
Similarly, Perrin-Riou's $p$-adic conjecture \cite[Conj.\ 2.7]{colmez} \cite[4.2.2]{PRbook} reads
\begin{equation}\label{PRconj}
 \dim H^1_f(K, V_{f,\chi}) = 2\cdot \ord_{\lambda = \triv} L(f,\chi,\ell_K, \lambda),
 \end{equation}
where $\ell_K$ is the cyclotomic logarithm and the derivatives are taken in the cyclotomic direction.  In Section \ref{proof}, we deduce the ``analytic rank 1" case of Perrin-Riou's conjecture by combining our main formula with the forthcoming results of Elias \cite{yara} on Euler systems for generalized Heegner cycles:    
\begin{theorem}\label{PRproof}
If $L'_p(f \otimes \chi, \ell_K, \mathbbm{1}) \neq 0$, then $(\ref{PRconj})$ is true, i.e.\ Perrin-Riou's $p$-adic Bloch-Kato conjecture holds for the motive $M(f)_K \otimes M(\chi)$.  
\end{theorem}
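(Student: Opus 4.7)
The plan is to combine the main formula (Theorem \ref{main}) with Elias's forthcoming Euler system \cite{yara} for generalized Heegner cycles, exactly as Nekov\'a\v{r}'s formula \cite{Nek} is combined with \cite{NekEuler} to deduce the analytic rank $1$ case of Perrin-Riou's conjecture when $\ell = 0$.

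First, the hypothesis $L_p'(f \otimes \chi, \ell_K, \triv) \neq 0$ together with Theorem \ref{main} forces $\langle z_{f,\chi}, z_{f,\bar\chi}\rangle_{\ell_K} \neq 0$, provided the Euler factors $\prod_{\p | p}(1 - \chi(\p)p^{r-k-1}/\alpha_p(f))^2$ do not vanish---this is a routine check on Weil numbers under the ordinarity hypothesis. In particular both classes $z_{f,\chi}$ and $z_{f,\bar\chi}$ must be nonzero in $H^1_f(H, V_{f,A,\ell})\otimes \bar\Q_p$.

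Next, I would descend these non-vanishing statements to $G_K$-cohomology. Using $V_{f,\chi}|_{G_H} \cong V_{f,A,\ell}$ and $V_{f,\chi} \cong \chi \oplus \bar\chi$ as $G_K$-representations, and identifying $\Gal(H/K) \cong \Pic(\O_K)$ via class field theory, the averages over the class group built into the definitions of $z_{f,\chi}$ and $z_{f,\bar\chi}$ are precisely the projectors onto the $\chi$- and $\bar\chi$-isotypic components of the $\Gal(H/K)$-action on $H^1_f(H,V_{f,A,\ell})\otimes \bar\Q_p$. The inflation-restriction sequence (with the order of $\Gal(H/K)$ invertible after tensoring with $\bar\Q_p$) then produces nonzero classes in $H^1_f(K, \chi)\otimes \bar\Q_p$ and $H^1_f(K, \bar\chi)\otimes \bar\Q_p$ respectively. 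Since $\chi \neq \bar\chi$ (automatic from $\ell > 0$), this yields the lower bound
\[\dim_{\bar\Q_p} \bigl(H^1_f(K, V_{f,\chi})\otimes \bar\Q_p\bigr) \geq 2.\]

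For the matching upper bound I would invoke Elias \cite{yara}: once the cycles $\{z_{f,\chi}^\AA\}$ are fit into a norm-compatible Euler system over the ring class tower of $K$, a Kolyvagin derivative argument adapted from \cite{NekEuler} bounds the Bloch-Kato Selmer group. Since the basic class $z_{f,\chi}$ is nonzero, the argument yields $\dim H^1_f(K, V_{f,\chi}) \leq 2$. Combining with the lower bound gives equality to $2$; under the hypothesis $L_p'(\triv) \neq 0$ (and the vanishing of $L_p(\triv)$, which follows from the sign of the functional equation in the Heegner-type setup treated here), one has $\ord_{\lambda = \triv} L_p(f,\chi,\ell_K,\lambda) = 1$, and this is exactly (\ref{PRconj}).

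The principal obstacle is the Euler system input: one must verify that Elias's construction satisfies the Kolyvagin-style axioms at the level of $V_{f,\chi}$, in particular sufficient bigness of the image of $G_K$ on an integral lattice, and the precise congruence and norm-compatibility relations among the derived classes. These parallel the conditions established in \cite{NekEuler} and should hold under the standing hypotheses of the paper (odd discriminant $D$, splitting of $pN$ in $K$, $f$ ordinary and non-CM), but their formal verification is what \cite{yara} is asked to supply.
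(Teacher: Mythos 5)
Your proposal is correct and follows essentially the same route as the paper: non-vanishing of the height from Theorem \ref{main}, descent of the non-vanishing to $H^1_f(K,V_{f,\chi})$ via the identification of the class-group average with corestriction/isotypic projection (the paper's Lemma \ref{cores}), and the upper bound delegated to Elias's Euler system. The only cosmetic difference is that the paper phrases the descent through $\cor_{H/K}$ and $\res\circ\cor = \Nm$ rather than inflation-restriction, and note that your proviso about the Euler factors is vacuous in this direction, since $L_p'\neq 0$ already forces every factor of the product in Theorem \ref{main} to be nonzero.
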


\begin{remark}
Alternatively, we can think of $z_{f,\chi}$ (resp.\ $z_{f,\bar \chi}$) as giving a class in $H^1_f(K, V_f \otimes \chi)$ (resp.\ $H^1_f(K, V_f \otimes \bar \chi)$), and note that $L(V_f \otimes \chi, s) = L(f,\chi, s) = L(V_f \otimes \bar\chi,s )$.  The Bloch-Kato conjecture for the motive $f \otimes \chi$ over $K$ then reads
\[ \dim H^1_f(K, V_f \otimes \chi) =  \ord_{s = r + k} L(f,\chi,s),\]
and similarly for $\bar\chi$ and the $p$-adic $L$-functions. 
\end{remark}

%In the reverse direction, Theorem \ref{main} can potentially be used to show (in certain situations) that $L'_p(f \otimes \chi, \mathbbm{1})$ is non-zero.  Consider the setup of \cite[Remark 3]{BDP5}, where one knows that the cohomology class $z_{f,\chi}$ is non-torsion by computing the value of a certain other $p$-adic $L$-function.  If we assume the non-degeneracy of $p$-adic heights, then Theorem \ref{main} implies that $L'_p(f \otimes \chi, \mathbbm{1}) \neq 0$.  Of course, the non-degeneracy of $p$-adic heights is an important open question, and is known only in certain cases for points on CM elliptic curves \cite{bertrand}.  

We anticipate that Theorem \ref{main} can also be used to study the variation of generalized Heegner cycles in $p$-adic families, in the spirit of \cite{francesc} and \cite{BHiwa}.  Theorem \ref{main} allows for variation in not just the weight of the modular form $f$, but in the weight of the Hecke character $\chi$ as well.

\subsection{Related work}
There has been much recent work on the connections between Heegner cycles and $p$-adic $L$-functions.  Generalized Heegner cycles were first studied in \cite{BDP1}, where their Abel-Jacobi classes were related to the special {\it value} (not the derivative) of a different Rankin-Selberg $p$-adic $L$-function.  Brooks extended these results to Shimura curves over $\Q$ \cite{hunter} and recently Liu, Zhang, and Zhang proved a general formula for arbitrary totally real fields \cite{lzz}.  In \cite{disegni}, Disegni computes $p$-adic heights of Heegner points on Shimura curves, generalizing the weight 2 formula of Perrin-Riou for modular curves.  Kobayashi \cite{kob} extended Perrin-Riou's height formula to the supersingular case.  Our work is the first (as far as we know) to study $p$-adic heights of generalized Heegner cycles.                  

% Plan of Proof
\subsection{Proof outline}
The proof of Theorem \ref{main} follows \cite{Nek} and \cite{PR1} rather closely.  For this reason, we have chosen to retain much of Nekov\' a\u r's notation and not to dwell long on computations easily adapted to our situation.    

We define the $p$-adic $L$-function $L_p(f \otimes \chi, \lambda)$ in Section \ref{pL} and show that it vanishes in the anticyclotomic direction.  In Section \ref{integrate}, we integrate the $p$-adic logarithm against the $p$-adic Rankin-Selberg measure to compute what is essentially the derivative of $L_p(f\otimes \chi)$ at the trivial character in the cyclotomic direction.  In Section \ref{cycles}, we define the generalized Heegner cycles and describe Hecke operators and $p$-adic Abel-Jacobi maps attached to the variety $X$.  After proving some properties of generalized Heegner cycles, we show that the RHS of Theorem \ref{main} vanishes when $\ell_K$ is anticyclotomic.  In Section \ref{localhts} we compute the local cyclotomic heights of $z_f$ at places $v$ which are prime to $p$.  In Section \ref{ordsec}, we prove that $V_{f,A,\ell}$ is an ordinary representation.  We complete the proof of the main theorem in Section \ref{proof}, modulo the results from the final section.  

In the final section, we fix the proof in \cite[II.5]{Nek}, to complete a proof of the vanishing of the contribution coming from local heights at primes above $p$.  The key ingredient is the theory of relative Lubin-Tate groups and Theorem \ref{crysmixed}.  The latter is a result in $p$-adic Hodge theory which relies on Faltings' proof of Fontaine's $C_\cris$ conjecture.  This theorem (or rather, its proof) is quite general and should be useful for computing $p$-adic heights of algebraic cycles sitting on varieties fibered over curves.       

\subsection{Acknowledgments}

I am grateful to Kartik Prasanna for suggesting this problem and for his patience and direction.  Thanks go to Hunter Brooks for several productive conversations, and to Bhargav Bhatt, Daniel Disegni, Yara Elias, Olivier Fouquet, Adrian Iovita, Shinichi Kobayashi, Jan Nekov\'a\v{r}, and Martin Olsson for helpful correspondence.  The author was partially supported by National Science Foundation RTG grant DMS-0943832.

%\subsection{Acknowledgments}
% Should thank Hunter, Julian?, Kartik, Iovita, Nekovar, Olsson?, Bhargav?, Kobayashi, Disegni 

%CONSTRUCTING P-ADIC L-FUNCTION
\section{Constructing the $p$-adic $L$-functions}\label{pL}

Recall $f \in S_{2r}(\Gamma_0(N))$ is an ordinary newform with trivial nebentypus.  As in the introduction, $\chi: \A^\times_K/K^\times \to \C^\times$ is an unramified Hecke character of infinity type $(2k,0)$ with $0 < 2k < 2r$.  For conventions regarding Hecke characters, see \cite[\S4.1]{BDP1}. All that follows will apply to $\chi$ of infinity type $(0,2k)$ with suitable modifications.  In this section, we follow \cite{Nek} and define a $p$-adic $L$-function attached to the pair $(f, \chi)$ which interpolates special values of certain Rankin-Selberg convolutions.  

\subsection{$p$-adic measures}

We use the notation of \cite{Nek} unless stated otherwise.  We construct the $p$-adic $L$-function only in the setting needed for Theorem \ref{main}; in the notation of \cite{Nek}, this means that $\Omega = 1, N_1 = N_2 = c_1 = c_2 = c = 1, N_3 = N'_3 = N, \Delta = \Delta_1 = \Delta_2 = |D|, \Delta_3 = 1,$ and $\gamma = \gamma_3 = 0$.  We begin by defining theta measures.

Fix an integer $m \geq 1$ and let $\O_m$ be the order of conductor $m$ in $K$.  Let $\a$ be proper $\O_m$-ideal whose class in $\Pic(\O_m)$ is denoted by $\AA$.  The quadratic form $$Q_\a(x) = \N(x)/\N(\a),$$ takes integer values on $\a$.  Define the measure $\Theta_\AA$ on $\Z^\times_p$ by 
\begin{equation}
\Theta_\AA (a (\mod p^\nu)) = \chi(\bar \a)^{-1}\sum_{\substack{x \in \a \\ Q_\a(x) \equiv a \, (\mod p^\nu) }}\\ \bar x^\ell  q^{Q_\a(x)}. \end{equation}  
To keep things from getting unwieldy we have omitted $\chi$ from the notation of the measure.  If $\phi$ is a function on $\Z/p^\nu\Z$ with values in a $p$-adic ring $A$, then  
\begin{equation}
\Theta_\AA(\phi) = \chi(\bar \a)^{-1}\sum_{x \in \a} \phi(Q_\a(x)) \bar x^\ell q^{Q_\a(x)} = \chi(\bar \a)^{-1} \sum_{n \geq 1} \phi(n) \rho_\a(n, \ell) q^n,
\end{equation} 
where $\rho_\a(n, \ell)$ is the sum $\sum \bar x^\ell$ over all $x \in \a$ with $Q_\a(x) = n.$  We have 
$$\rho_{\a(\gamma)}(n,\ell) = \bar \gamma^\ell \rho_\a(n,\ell),$$
for all $\gamma\in K^\times$, so that $\Theta_\AA$ is independent of the choice of representative $\a$ for the class $\AA$.  
For $\a \in \AA$, 
\begin{equation}
\chi(\bar \a)^{-1}\sum_{x \in \a} \bar x^\ell q^{Q_\a(x)} = w_m\sum_{\substack{\a' \in \AA \\ \a'\subset \O_m}}\\ \chi(\a')q^{\N(\a')} = w_m \sum_{n \geq 1} r_{\AA,\chi}(n) q^n,
\end{equation} 
since $\ell$ is a multiple of $w_m$.  The coefficients $r_{\AA,\chi}(n)$ play the role of (and generalize) the numbers $r_\AA(m)$ that appear in \cite{GZ} and \cite{Nek}.

% MODULARITY OF THETA SERIES
\begin{proposition}
$\Theta_\AA(\phi)$ is a cusp form in $M_{\ell+1}(\Gamma_1(M), A)$, with $M = \lcm(|D|m^2,p^{2\nu})$.  
\end{proposition}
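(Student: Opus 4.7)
The plan is to reduce the proposition to the classical Hecke--Schoeneberg--Shimura theorem on theta series attached to a positive-definite binary quadratic form, twisted by a harmonic polynomial and a Dirichlet character. First, I would write the $A$-valued function $\phi$ on $\Z/p^\nu\Z$ as a finite $A$-linear combination of Dirichlet characters $\psi$ modulo $p^\nu$, by expanding $\phi$ in indicators of residue classes and applying Fourier inversion on $(\Z/p^\nu\Z)^\times$ (and handling the $p\mid a$ residues separately, or noting that the terms with $p\mid Q_\a(x)$ may be absorbed into the analogous twisted theta series). Since the formation of $\Theta_\AA(\phi)$ is $A$-linear in $\phi$, this reduces the claim to the case $\phi = \psi$ a Dirichlet character mod $p^\nu$.

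For such $\phi$, one has
\[
\Theta_\AA(\psi) = \chi(\bar \a)^{-1}\sum_{x \in \a} \psi(Q_\a(x))\,\bar x^\ell\, q^{Q_\a(x)},
\]
which is the classical theta series associated to the lattice $\a$, the positive-definite binary quadratic form $Q_\a(x) = \N(x)/\N(\a)$, the polynomial $P(x) = \bar x^\ell$, and the character $\psi \circ Q_\a$. Identifying $\a \subset \C \cong \R^2$, the polynomial $\bar z^\ell$ is harmonic of degree $\ell$ for the Laplacian associated to $Q_\a$, since $\partial_z \bar z^\ell = 0$. The classical theorem (see Shimura's \emph{Introduction to the Arithmetic Theory of Automorphic Functions}, Ch.\ 10) then asserts that the resulting theta series is holomorphic of weight $1 + \ell$. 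Because $\a$ is a proper $\O_m$-ideal, the discriminant of $Q_\a$ on the lattice $\a$ equals $|D|m^2$. Inserting $\psi$ evaluated at $Q_\a(x)$ is equivalent to twisting the untwisted theta series by the Dirichlet character $\psi$, which raises the level from $|D|m^2$ to $\lcm(|D|m^2, p^{2\nu})$ (the square on $p^\nu$ arising because $\psi$ is evaluated on the quadratic form $Q_\a$ rather than a linear form). Consequently $\Theta_\AA(\psi) \in M_{\ell+1}(\Gamma_1(M), A)$ for $M = \lcm(|D|m^2, p^{2\nu})$; the appearance of $\Gamma_1(M)$ rather than $\Gamma_0(M)$ reflects that the nebentypus picked up from $\psi$ and the Jacobi symbol of $Q_\a$ need not be trivial. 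Cuspidality is immediate from the hypothesis $\ell > 0$: the harmonic polynomial $\bar x^\ell$ vanishes at $x = 0$, eliminating the constant term at $\infty$, and the Schoeneberg transformation law replaces $P$ by another harmonic polynomial of the same positive degree, so the constant terms at every cusp of $\Gamma_1(M)$ vanish.

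The main technical obstacle is the precise level computation, namely verifying that the contribution from the congruence condition on $Q_\a(x)$ is exactly $p^{2\nu}$ (and not $p^\nu$ or something larger), and that the contribution from the quadratic form is exactly $|D|m^2$. This requires a careful reading of the transformation formulas for theta series with harmonic polynomial and character on the lattice $\a$, as well as the identification of $\operatorname{disc}(Q_\a)$ with $\operatorname{disc}(\O_m) = Dm^2$ (valid because $\a$ is \emph{proper} as an $\O_m$-ideal). Once the classical machinery is applied correctly, the remaining claims --- the weight being $\ell + 1$, the harmonicity of $\bar z^\ell$, the independence of $\Theta_\AA(\phi)$ from the choice of representative $\a$ (already noted in the excerpt via $\rho_{\a(\gamma)}(n,\ell) = \bar\gamma^\ell \rho_\a(n,\ell)$), and cuspidality from $\ell > 0$ --- are essentially formal.
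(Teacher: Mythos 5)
Your argument is correct and follows the same basic strategy as the paper's proof: both reduce to the classical fact (Ogg, Schoeneberg, Shimura) that $\sum_{x\in\a}\bar x^\ell q^{Q_\a(x)}$ is a cusp form of weight $\ell+1$ on $\Gamma_1(|D|m^2)$ (here the identification of $\operatorname{disc}(Q_\a)$ with $Dm^2$ for a proper $\O_m$-ideal, the sphericity of $\bar z^\ell$, and cuspidality from $\ell>0$ all enter exactly as you describe), and then deal with the insertion of $\phi$. Where you genuinely differ is the second step. The paper simply invokes \cite[Proposition 1.1]{Hida1}, which states directly that weighting the $q$-expansion of a form of level $N$ with coefficients in $A$ by an \emph{arbitrary} $A$-valued function on $\Z/p^\nu\Z$ yields a form of level $\lcm(N,p^{2\nu})$ still with coefficients in $A$. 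You instead re-derive this by decomposing $\phi$ into Dirichlet characters and treating each character twist as a theta series with nebentypus. Your route works but needs two repairs that Hida's lemma handles for free: (i) a general function on $\Z/p^\nu\Z$ is \emph{not} a linear combination of Dirichlet characters mod $p^\nu$ --- the indicator functions of non-unit residue classes must be expanded in additive characters, i.e.\ handled by the twists $f(z)\mapsto f(z+t/p^j)$ rather than multiplicative ones (this is precisely where the exponent $2\nu$, as opposed to $\nu$, originates in Hida's proof, so your parenthetical ``absorbed into the analogous twisted theta series'' is the one place the argument is not yet complete); and (ii) Fourier inversion introduces roots of unity and a factor $\varphi(p^\nu)^{-1}$ that need not lie in the $p$-adic ring $A$, so to recover $A$-coefficients at the end you must appeal to the $q$-expansion principle. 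With those caveats supplied, the two proofs coincide in substance; the paper's is essentially a two-citation version of yours.
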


\begin{proof}
%We sketch a proof of this well known fact.  First recall that if $P(x)$ is a spherical function of degree $\ell$ on $\a$, then $\sum P(x)q^{Q_\a(x)} \in M_{\ell+1}(\Gamma_1(4|D|m^2))$\footnote{Need to figure out the optimal level here.} (see \cite{Iwan} or \cite{Miy}).  Recall that a spherical function for the quadratic form $x_1^2+x_2^2$ is a homogeneous polynomial which is killed by the Laplacian operator $\frac{\partial^2}{\partial x_1^2} + \frac{\partial^2}{\partial x_2^2}.$  For an arbitrary quadratic form $Q = x^t A x$, $P$ is spherical if it is killed by the modified Laplacian 
%$$\sum_{ij} (A^{-1})_{ij} \frac{\partial^2P}{\partial x_i \partial x_j}.$$  In our case, the quadratic form is given by $Q(x) = a^2 - Db^2$ for $x = a+ b\sqrt{D}$, and 
%$A = \left( \begin{array}{cc}
%1 & 0 \\
%0 & -D \end{array} \right).$
%A simple computation shows that the function $P(x)  = x^\ell$ is killed by the modified Laplacian, so $\sum_{x \in \a} x^\ell q^{Q_\a(x)}$ is a modular form in $M_{\ell+1}(\Gamma_1(4|D|m^2))$.  

It is classical \cite{Ogg} that $\sum_{x \in \a} \bar x^\ell q^{Q_\a(x)}$ is a cusp form in $M_{\ell+1}(\Gamma_1(|D|m^2))$.  It follows from \cite[Proposition 1.1]{Hida1} that weighting this form by $\phi$ gives a modular form of the desired level.  
\end{proof}

For a fixed integer $C$, define the Eisenstein measures 
$$E_1(\alpha (\mod p^\nu))(z) = E_1(z,\phi_{\alpha,p^\nu}) $$ and 
$$E_1^C(\alpha(\mod p^\nu))(z) = E_1(\alpha(\mod p^\nu))(z) - CE_1(C^{-1}\alpha(\mod p^\nu))(z),$$ as in \cite[I.3.6]{Nek}.  Similarly, we define the following convolution measure on $\Z_p^\times$  
\begin{align*}
&\Phi^C_\AA ( a (\mod p^\nu)) =\\
 &H\left[ \sum _{\alpha \in (\Z/|D|p^\nu\Z)^\times} \xi(\alpha)\Theta_\AA(\alpha^2a (\mod p^\nu))(z)\delta^{r-1 - k}_1(E_1^C(\alpha (\mod |D|p^\nu))(Nz))\right],
 \end{align*}
which takes values in $\overline M_{2r}(\Gamma_0(N|D| p^\infty); \chi(\bar \a)^{-1}p^{-\delta}\Z_p)$, for some $\delta$ depending only on $r$ and $k$ \cite[Lem.\ 5.1]{Hida1}.  Here, $H$ is holomorphic projection, $\delta_1^{r-1-k}$ is Shimura's differential operator, and $\xi = \left(\frac{D}{\cdot}\right)$.  We are implicitly identifying $\Z_p$ with the ring of integers of $K_\p$ for a prime $\p$ above $p$ (which is split in $K$), so that $x^\ell \in \Z_p$ for all $x \in \a$. 
The measure $\Psi_\AA^C$ is defined by
$$\Psi_\AA^C = \frac{1}{2w_m}\Phi^C_\AA\bigg|_{2r} \mathscr{T}(|D|)_{N|D|p^\infty/Np^\infty},$$
where $$\mathscr{T}: M_{2r}\left(\Gamma_0\left(N|D|p^\infty\right), \cdot\right) \to M_{2r}\left(\Gamma_0\left(Np^\infty\right),\cdot\right)$$ is the trace map, i.e.\ the adjoint to the operator $g \mapsto |D|^{r-1} g\bigg|_{2r} \left( \begin{array}{cc}
|D| & 0 \\
0 & 1 \end{array} \right).$   

For ring class field characters $\rho: G(H_m/K) \to \overline \Q^\times$, define
$$\Phi_\rho^C = \sum_{[\AA] \in \Pic(\O_m)} \rho([\AA])^{-1}\Phi_\AA^C,$$ and similarly for $\Psi^C_\rho$.  We define $\Psi^C_{f,\rho} = L_{f_0}(\Psi_\rho^C),$ where $L_{f_0}$ is the Hida projector attached to the $p$-stabilization $$f_0 = f(z) - \frac{p^{2r-1}}{\alpha_p(f)}f(pz)$$ of $f$ (see \cite[I.2]{Nek} for its definition and properties).  Explicitly, if $g \in M_j(\Gamma_0(Np^\mu); \bar \Q)$ with $\mu \geq 1$, then   
\begin{equation}
L_{f_0(g)} = \left(\frac{p^{j/2 -1}}{\alpha_p(f)}\right)^{\mu-1} \frac{\left\langle f_0^\tau \bigg |_j \left( \begin{array}{cc}
0 & -1 \\
N p^\mu & 0 \end{array} \right), g \right\rangle_{Np^\mu}}{\left\langle f_0^\tau \bigg |_j \left( \begin{array}{cc}
0 & -1 \\
N p & 0 \end{array} \right), f_0 \right\rangle_{Np}}.
\end{equation}
We also define a measure $\Psi_f^C$ on $\Gal(H_{p^\infty}/K) \times \Gal(K(\mu_{p^\infty})/K)$ by
$$\Psi^C_f(\sigma \,(\mod p^n), \tau \, (\mod p^m)) = L_{f_0}(\Psi^C_\AA (a\, (\mod p^m))),$$
where $\sigma$ corresponds to $\AA$ and $\tau$ corresponds to $a \in (\Z/p^m\Z)^*$ under the Artin map.  Finally, as in \cite{Nek}, we define modified measures $\tilde \Psi_\AA^C, \tilde \Psi^C_\rho,$ etc., by replacing $\mathscr{T}(|D|)$ with $\mathscr{T}(1)$ in the definition of $\Psi_\AA^C$.     

% INTEGRATING FINITE ORDER CHARACTERS
\subsection{Integrating characters against the Rankin-Selberg measure}

In this subsection, we integrate finite order characters of the $\Z_p^2$-extension of $K$ against the measures constructed in the previous section and show that they recover special values of Rankin-Selberg $L$-functions.  This allows us to prove a functional equation for the (soon to be defined) $p$-adic $L$-function.   We follow the computations in \cite[I.5]{Nek} and \cite[\S4]{PR2}. Let $\eta$ denote a character $(\Z/p^\nu\Z)^\times \to \bar \Q^\times$.  
Exactly as in \cite[Lemma 7]{PR2}, we compute:
\begin{equation}\int_{\Z^\times_p} \eta \,d\Phi_\AA^C = \left(1 - C\xi(C)\bar \eta^2(C)\right)H[\Theta_\AA(\eta)(z)\delta_1^{r-k-1}\left(E_1(Nz,\phi)\right)].
\end{equation}
Similarly, if $\rho$ is a ring class character with conductor a power of $p$,
\begin{equation}
\int_{\Z^\times_p} \eta \,d\Phi_\rho^C = w_m\left(1 - C\xi(C)\bar \eta^2(C)\right)H[\Theta_\chi(\W '')(z)\delta_1^{r-k-1}\left(E_1(Nz,\phi)\right)],
\end{equation}
where $\W'' = \rho \cdot (\eta \circ \N)$, the latter being thought of as a character modulo the ideal $\f =  \lcm($cond $\rho$, cond $\eta, p)$.  We denote by $\W$ the primitive character associated to $\W''$.  By definition,  
$$\Theta_\chi(\W'')(z) = \sum_{\substack{\a \subset \O \\ (\a, \f) = 1}}\W''(\a)\chi(\a)q^{\N(\a)}.$$  This is a cusp form in $S_{\ell+1}\left(|D|\N^K_\Q\left(\f_{\W''}\right), \left(\frac{D}{\cdot}\right)\eta^2\right)$, since $\chi$ is unramified (see \cite{Ogg} for a more general result).   
The computations of \cite[I.5.3-4]{Nek} carry over to our situation, except the theta series transformation law now reads  
\begin{equation}
\Theta_\chi(\W'')(z)\bigg|_{\ell+1} \mathscr{F} = \left(\frac{D}{w}\right)\bar \eta ^2(w)\Theta_\chi(W'')\bigg|_{\ell + 1}\left( \begin{array}{cc}
0 & -1 \\
|D| p^\mu & 0 \end{array} \right),
\end{equation}
where $\mathscr{F}$ is the involution
$$ \left( \begin{array}{cc}
0 & -1 \\
N |D|p^\mu & 0 \end{array} \right) \left( \begin{array}{cc}
N & y \\
N |D|p^\mu t & N \end{array} \right)$$ with $Nxw - |D|p^\mu ty = 1$.  We then obtain
\begin{align*} &\int_{\Z_p^\times}\eta \, d\Psi_{f,\rho}^C = \\
 &\left(1 - C\xi(C)\bar \eta^2(C)\right)\frac{\left(\left(\frac{D}{\cdot}\right)\eta^2\right)(N)\lambda_N(f)|D|^{r - 1/2}}{(4\pi i)\alpha_p(f)^{-1}p^{r-1}} \frac{\Lambda_\mu(\W'')}{\left<  f_0^\tau \bigg|_{2r} \left( \begin{array}{cc}
0 & -1 \\
N p & 0 \end{array} \right), f_0\right >_{Np}},
\end{align*}
where 
\[\Lambda_\mu(\W'') = \frac{p^{\mu(r-1/2)}}{\alpha_p(f)^\mu}\left< f_0^\tau, \Theta_\chi\left(\W''\right)\bigg|_{\ell + 1}\left( \begin{array}{cc}
0 & -1 \\
|D| p^\mu & 0 \end{array} \right)\delta^{r-k-1}\left(E_1\left(z,\xi \bar \eta^2 \right)\right)\right>_{N|D| p^\mu}.\]
 We define $\tau(\chi\W)$ by the relation
\begin{equation}\Theta_\chi(\W)\big|_{\ell + 1}\left( \begin{array}{cc}
0 & -1 \\
|D|p^\beta & 0 \end{array} \right) = (-1)^{k+1}i \tau(\chi\W)\Theta_{\bar\chi}(\bar \W),\end{equation}
with $|D|p^\beta$ being the level $\Delta(\W)$ of $\Theta_\chi(\W)$.  One knows (\cite[Thm.\  4.3.12]{Miy}) that $\tau(\chi\W) \in \bar \Q^\times$, $|\tau(\chi\W)| = 1$, and $$\Lambda(\chi\W,s) = \tau(\chi\W)\Lambda(\bar\chi\bar\W,\ell+1-s),$$
where $$\Lambda(\chi\W,s) = \left(|D|p^\beta\right)^{s/2}(2\pi)^{-s}\Gamma(s)L(\Theta_\chi(\W),s).$$
Modifying the computations in \cite[\S4]{PR2}, we find that 
\begin{equation}
\Lambda_\mu(\W'') = (-1)^{k+1}i\tau(\chi\W)\sum_{\substack{\a | p \\ \N(\a) = p^s}}\mu(\a)\chi(\a)\W(\a)\Lambda_{\mu,s},\end{equation}
with
\begin{equation}\Lambda_{\mu,s} = \frac{p^{\mu\left(r-\frac{1}{2}\right) -s\left(k+\frac{1}{2}\right)} }{\alpha_p(f)^\mu} \left<f_0^\tau, \Theta_{\bar \chi}(\bar \W)\bigg|_{\ell + 1}\left( \begin{array}{cc}
p^x & 0 \\
0 & 1 \end{array} \right)\delta^{r-k-1}(E_1(z,\xi\bar \eta^2))\right>_{N|D|p^\mu}\end{equation}
and $x = \mu  - \beta -s$.
%, and 
%$$\Gamma(\chi\W) = \sum_{\substack{\b | c_3\\ \N(b) = m}} \frac{\mu(\b)\chi(\b)\W(\b)a_m(\Theta_{\bar\chi}(\bar \W))}{m^{r+k}} = \prod_{\substack{\ell | (c_1,c_2)\\ (\ell, c/c_1) = 1}}\left(1 - \frac{1}{\ell^{r+k}}\right).$$

%The computation in I.5.6 of \cite{Nek} (which is really section 4.4 of \cite{PR2}) becomes cumbersome when $p$ divides $N$, so from now on we assume that $(p,N) = 1$ (we assume this for our main theorem anyway).  This forces $\gamma = \gamma_3 = 0$.  

Following \cite[\S4.4]{PR2}, we compute:
\begin{align*}
&\Lambda_\mu(\W'') =\\
& (-1)^ri\tau(\chi\W)V_p(f,\chi,\W)\left(\frac{p^{r-1/2}}{\alpha_p(f)}\right)^\beta \frac{2(r+k-1)!(r-k-1)!}{(4\pi)^{2r-1}} L(f,\Theta_{\bar\chi}(\bar \W),r+k),
\end{align*}
where 
$$V_p(f,\chi,\W) = \prod_{\p | p} \left(1 - \frac{(\bar\chi\bar\W)(\p)}{\alpha_{\N(\p)}(f)}\N(\p)^{r-k-1}\right)
\left(1 - \frac{(\chi\W)(\p)}{\alpha_{\N(\p)}(f)}\N(\p)^{r-k-1}\right).$$  We have used the fact that 
\begin{equation}\label{pair}
\left\langle f^\tau, g\delta^{r-k-1}_1(E_1(z,\phi))\right\rangle_M = \frac{(1-\e(-1))(r+k-1)!(r-k-1)!}{(-1)^{r-k-1}(4\pi)^{2r-1}}L(f,g,r+k)
\end{equation}
for any $g \in S_{2k+1}(M', \epsilon)$, and where $M = M'N$.  Equation (\ref{pair}) follows from the usual unfolding trick and the fact \cite[I.1.5.3]{Nek} that
$$\delta_1^{r-k-1}(E_1(z,\phi)) = \frac{(r-k-1)!}{(-4\pi)^{r-k-1}}E_{r-k}(z,\phi).$$
We have also used the following generalization of \cite[Lemma 23]{PR2}.
\begin{lemma}
If $g$ is a modular form whose $L$-function admits a Euler product expansion $\prod_p G_p(p^{-s})$, then
$$L(f_0,g,r+k) = G_p\left(p^{r-k-1}\alpha_p(f)^{-1}\right)L(f,g,r+k).$$ 
\end{lemma}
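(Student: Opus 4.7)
The plan is to reduce the identity to a comparison of local Euler factors at $p$. First I would observe that the $p$-stabilization has Fourier coefficients $a_n(f_0) = a_n(f) - \beta\, a_{n/p}(f)$, where $\beta = p^{2r-1}/\alpha_p(f)$, so that $L(f_0,s) = (1-\beta p^{-s})L(f,s)$. Equivalently, $f_0$ is a $U_p$-eigenform with eigenvalue $\alpha := \alpha_p(f)$, and its $p$-Euler factor $(1-\alpha p^{-s})^{-1}$ is obtained from that of $f$ by discarding the Satake parameter $\beta$.

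Next I would invoke the standard decomposition of the Rankin-Selberg Euler factor at a prime $p \nmid N$. Using that $L(g,s) = \prod_p G_p(p^{-s})$ is Eulerian and that $f$ is a Hecke eigenform with Satake parameters $\alpha, \beta$ at $p$, the local factor of $L(f,g,s)$ at $p$ decomposes as $G_p(\alpha p^{-s})\, G_p(\beta p^{-s})$, after cancelling the degree-two numerator appearing in the generating function $\sum_k a_{p^k}(f)\, b_{p^k}(g)\, X^k$ against the $p$-factor of the imprimitive Dirichlet $L$-factor built into $L(f,g,s)$. The analogous identity for $L(f_0,g,s)$ retains only the surviving Satake parameter $\alpha$, yielding $L_p(f_0,g,s) = G_p(\alpha p^{-s})$. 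Since the Euler factors of $L(f,g,s)$ and $L(f_0,g,s)$ at primes other than $p$ agree and the imprimitive factors cancel in the ratio, the quotient reduces to the contribution from the missing $\beta$-factor. Evaluating at $s = r+k$ with $\beta\, p^{-(r+k)} = p^{r-k-1}/\alpha_p(f)$ then gives the claimed identity.

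The main bookkeeping point is the cancellation of the extra factor in the generating function $\sum_k a_{p^k}(f)\, b_{p^k}(g)\, p^{-ks}$ against the local factor at $p$ of the imprimitive Dirichlet $L$-series, which I would verify using the Hecke relations for $f$ and $g$ separately and then combining them via the classical identity for convolutions of two Euler-product eigenforms. This is the same mechanism underlying the unfolding in (\ref{pair}), so no essentially new input is required; one need only keep careful track of the conventions for Satake parameters and nebentypi fixed earlier in the section.
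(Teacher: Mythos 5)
The paper offers no proof of this lemma at all: it is stated as ``the following generalization of [PR2, Lemma 23]'' and left to the reference. Your Euler-factor comparison is exactly the intended argument, and its substance is correct: $\sum_k a_{p^k}(f_0)X^k = (1-\beta X)\bigl((1-\alpha X)(1-\beta X)\bigr)^{-1} = (1-\alpha X)^{-1}$, so the $p$-local factor of $L(f_0,g,s)$ is the naive convolution $\sum_k \alpha^k b_{p^k}p^{-ks}$ (no imprimitive factor at $p$ since $p$ divides the level of $f_0$), while the $p$-local factor of $L(f,g,s)$ is the full degree-four factor $\prod_{i,j}(1-\alpha_i\gamma_j p^{-s})^{-1}$ after the classical cancellation you describe; away from $p$ nothing changes, and $\beta p^{-(r+k)} = p^{r-k-1}\alpha_p(f)^{-1}$.

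The one point you should not gloss over is the final ``gives the claimed identity.'' With your (and the lemma's literal) convention that $G_p$ is the local Euler \emph{factor} of $L(g,s)$, i.e.\ $G_p(X)=\prod_i(1-\gamma_iX)^{-1}$, your own computation yields
\[
L(f_0,g,s) \;=\; G_p(\beta p^{-s})^{-1}\,L(f,g,s) \;=\; \prod_i\bigl(1-\gamma_i\beta p^{-s}\bigr)\,L(f,g,s),
\]
which is the \emph{reciprocal} of the displayed formula if $G_p$ is read consistently with the hypothesis. The statement is only coherent if $G_p$ in the conclusion denotes the reciprocal polynomial $\prod_i(1-\gamma_iX)$ --- which is forced by how the lemma is used, namely to produce the factor $V_p(f,\chi,\W)=\prod_{\p\mid p}(1-(\bar\chi\bar\W)(\p)\N(\p)^{r-k-1}\alpha_p(f)^{-1})(1-(\chi\W)(\p)\N(\p)^{r-k-1}\alpha_p(f)^{-1})$, a product of linear factors and not their inverses. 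This notational inversion is inherited from the paper (and from PR2), so it is not an error you introduced, but your write-up should state explicitly that the quotient of local factors is $\prod_i(1-\gamma_i\beta p^{-(r+k)})$ and fix the convention for $G_p$ accordingly; as written, the last sentence hides a sign-of-exponent discrepancy between hypothesis and conclusion.
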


% INTERPOLATION PROPERTY

Putting these calculations together, we obtain the following interpolation result. 
\begin{theorem}\label{interp}
For finite order characters $\W = \rho\cdot(\eta \circ \N)$ as above,
$$\left(1 - C\left(\frac{D}{C}\right)\bar \W(C)\right)^{-1}\int_{\Z_p^\times}\eta \, d\Psi^C_{f,\rho} = \frac{\mathcal{L}_p(f,\chi,\W)V_p(f,\chi,\W)\Delta(\W)^{r-1/2}}{ \alpha_p(f)^\beta H_p(f)},$$
where  
$$\mathcal{L}_p(f,\chi,\W) = \left(\frac{D}{-N}\right) \W(N)\tau(\chi\W)C(r,k)\frac{L(f,\Theta_{\bar\chi}(\bar \W),r+k)}{\left<f,f\right>_N}.$$
Here, 
$$C(r,k) = \frac{2(-1)^{r-1}(r-k-1)!(r+k-1)!}{(4\pi)^{2r}}$$ and 
$$H_p(f) = \left(1 - \frac{p^{2r-2}}{\alpha_p(f)^2}\right)\left(1 - \frac{p^{2r-1}}{\alpha_p(f)^2}\right).$$
\end{theorem}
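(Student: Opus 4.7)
The plan is to thread together the two main displayed computations that immediately precede the theorem. The first gives $\int_{\Z_p^\times}\eta\,d\Psi_{f,\rho}^C$ as the product of the Eisenstein correction factor $\bigl(1 - C\xi(C)\bar\eta^2(C)\bigr)$, an explicit prefactor involving $\lambda_N(f)$, $|D|^{r-1/2}$, $\alpha_p(f)$, and $(4\pi i)^{-1}$, the pairing $\Lambda_\mu(\W'')$, and the Hida-projector denominator $\bigl\langle f_0^\tau|_{2r}\bigl(\begin{smallmatrix}0 & -1 \\ Np & 0\end{smallmatrix}\bigr),\,f_0\bigr\rangle_{Np}$. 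The second is the expansion of $\Lambda_\mu(\W'')$ in terms of $\tau(\chi\W)$, $V_p(f,\chi,\W)$, and $L(f,\Theta_{\bar\chi}(\bar\W),r+k)$, obtained by applying the theta functional equation (which introduces $\tau(\chi\W)$), Rankin-Selberg unfolding via equation (\ref{pair}), and the Euler-factor lemma. The proof is the substitution of the second into the first, followed by a collection of constants.

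The matchings to be performed are routine but delicate. The factor $\bigl(p^{r-1/2}/\alpha_p(f)\bigr)^\beta$ inside $\Lambda_\mu$ combines with $|D|^{r-1/2}$ to give $\Delta(\W)^{r-1/2}\alpha_p(f)^{-\beta}$, up to a residual $\alpha_p(f)/p^{r-1}$ which will be consumed by the Petersson denominator. The Eisenstein correction factor $\bigl(1 - C\xi(C)\bar\eta^2(C)\bigr)$ equals $\bigl(1 - C\bigl(\tfrac{D}{C}\bigr)\bar\W(C)\bigr)$ because for $C$ coprime to the conductors $\rho(C) = 1$ (the ideal $(C)$ is principal, generated by a rational integer, hence trivial in any ring class group), so $\W(C) = \eta(\N C) = \eta(C)^2$; the same observation turns $\bigl(\tfrac{D}{\cdot}\bigr)\eta^2(N)$ into $\bigl(\tfrac{D}{N}\bigr)\W(N)$. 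The numerical constants $(-1)^r i$ from $\Lambda_\mu$, $(4\pi i)^{-1}$ from the prefactor, and $2(r+k-1)!(r-k-1)!/(4\pi)^{2r-1}$ from $\Lambda_\mu$ recombine into $C(r,k)$, with the sign of the Atkin-Lehner eigenvalue $\lambda_N(f)$ absorbed into the $\bigl(\tfrac{D}{-N}\bigr)$ factor. Finally, the denominator is evaluated as $H_p(f)\langle f,f\rangle_N$ times the residual $\alpha_p(f)/p^{r-1}$ factor using the standard formula for the Petersson product of a $p$-stabilized newform \cite[I.2]{Nek}.

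The main obstacle is bookkeeping rather than analysis. All signs, roots of unity, Gauss-sum normalizations, Petersson-pairing levels, and powers of $p$ must be tracked consistently through a fairly long chain of manipulations. The new feature compared to the case $\ell = 0$ handled in \cite{Nek} and \cite{PR2} is the presence of the weight-$(\ell+1)$ theta series $\Theta_\chi(\W'')$ and the nontrivial Hecke character $\chi$; the sign $(-1)^{k+1}i$ in the root-number identity for $\Theta_\chi(\W)$ is absorbed into the normalization of $\tau(\chi\W)$ exactly as in \cite[Thm.\ 4.3.12]{Miy}, so no genuinely new analytic input is needed. One must, however, verify that each step of the $\ell = 0$ computation extends cleanly to the higher-weight setting, in particular that the generalized transformation law under $\bigl(\begin{smallmatrix}0 & -1 \\ |D|p^\mu & 0\end{smallmatrix}\bigr)$ produces only the expected factor $\bigl(\tfrac{D}{w}\bigr)\bar\eta^2(w)$ together with the root-number identity, and that the generalization of Perrin-Riou's Euler-factor lemma applies to the weight $\ell + 1$ theta series $\Theta_{\bar\chi}(\bar\W)$.
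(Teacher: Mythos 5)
Your proposal matches the paper's argument: the theorem is proved there exactly by substituting the closed-form expression for $\Lambda_\mu(\W'')$ (obtained from the theta functional equation, the unfolding identity (\ref{pair}), and the Euler-factor lemma) into the displayed formula for $\int_{\Z_p^\times}\eta\,d\Psi_{f,\rho}^C$ and collecting constants, with the identifications $\bar\W(C)=\bar\eta^2(C)$, $\W(N)=\eta^2(N)$, and $\Delta(\W)=|D|p^\beta$ handled as you describe. The only caveat is a minor one of attribution in the bookkeeping (the passage from $\left(\frac{D}{N}\right)$ to $\left(\frac{D}{-N}\right)$ comes from $\left(\frac{D}{-1}\right)=-1$ rather than from $\lambda_N(f)$, which must cancel outright), which does not affect the correctness of the approach.
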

The modified measures $\tilde \Psi_{f,\rho}^C$ satisfy
$$\int_{\Z^\times_p}\eta \, d\tilde \Psi_{f,\rho}^C = |D|^{1-r}\overline{(\chi\W)}(\mathcal{D})\int_{\Z^\times_p}\eta \,d\Psi_{f,\rho}^C,$$
where $\mathcal{D} = \left(\sqrt{D}\right)$ is the different of $K$.   
 
% DEFINITION OF p-ADIC L-FUNCTION 
Now to define the $p$-adic $L$-function. Recall we have fixed an integer $C$ prime to $N|D| p$.  
\begin{definition}
For any continuous character $\phi: G(H_{p^\infty}(\mu_{p^\infty})/K) \to \bar \Q_p^\times$ with conductor of $p$-power norm, 
%\footnote{Does this mean $\phi$ factors through the $\Z_p^2$-extension $K_\infty$?}, 
we define 
\begin{align*}
L_p&(f \otimes \chi, \phi) =\\
& (-1)^{r-1}H_p(f)\left(\frac{D}{-N}\right)\left(1 - C\left(\frac{D}{C}\right)\phi(C)^{-1}\right)^{-1}\int_{G(H_{p^\infty}(\mu_{p^\infty})/K) } \phi \, d \tilde\Psi^C_f.
\end{align*}
\end{definition}
%For a fixed character $\C: G(H_{p^\infty}(\mu_{p^\infty})/K) \to \bar \Q_p^\times$ of finite order we also define the function $$L_p(f \otimes \chi, \C)(\lambda) = L_p(f \otimes \chi, \C\lambda)$$
The $p$-adic $L$-function $L_p(f \otimes \chi)(\lambda) := L_p(f \otimes \chi, \lambda)$ is a function
of characters $$\lambda: G(H_{p^\infty}(\mu_{p^\infty})/K)  \to (1+p\Z_p).$$  $L_p(f \otimes \chi)$ is an Iwasawa function with values in $c^{-1}\O_{\widehat{\Q(f,\chi)}}$, where $\widehat{\Q(f,\chi)}$ is the $p$-adic closure (using our fixed embedding $\bar \Q \hookrightarrow \bar \Q_p$) of the field generated by the coefficients of $f$ and the values of $\chi$, and $c \in \widehat{\Q(f,\chi)}$ is non-zero.

We can construct analogous measures and an analogous $p$-adic $L$-function for $\bar \chi$, which is a Hecke character of infinity type $(0,\ell)$.  There is a functional equation relating $L_p(f \otimes \chi)$ to $L_p(f \otimes \bar \chi),$ which we now describe.  First define 
$$\Lambda_p(f \otimes \chi)(\lambda) = \lambda(\mathcal{D}N^{-1})\lambda(N)^{1/2}L_p(f \otimes \chi)(\lambda).$$   
 
% FUNCTIONAL EQUATION 
\begin{proposition}
$\Lambda_p$ satisfies the functional equation
$$\Lambda_p\left(f \otimes \chi\right)\left(\lambda\right) = \left(\frac{D}{-N}\right)\Lambda_p\left(f\otimes \bar \chi\right)\left(\lambda^{-1}\right).$$
\end{proposition}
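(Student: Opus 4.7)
The plan is to verify the identity on a Zariski-dense subset of the character space and then invoke continuity. Both sides of the claimed equation are values of Iwasawa functions in $\lambda$, so it suffices to check the identity when $\lambda$ ranges over the finite-order characters $\lambda = \W$ of the form $\W = \rho \cdot (\eta \circ \N)$, since such characters are dense in the space of continuous characters $G(H_{p^\infty}(\mu_{p^\infty})/K) \to (1+p\Z_p)$ (more precisely, their difference from $\lambda$ lies in a dense set after twisting). For $\W$ of this shape, Theorem \ref{interp} expresses $L_p(f\otimes \chi)(\W)$ in closed form, and the analogous construction with $\bar\chi$ in place of $\chi$ yields a parallel interpolation formula for $L_p(f\otimes \bar\chi)(\W^{-1})$.

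Once both sides are in closed form, I would match factor by factor. The complex Rankin--Selberg $L$-value on the right, $L(f,\Theta_{\overline{\bar\chi\W^{-1}}},r+k) = L(f,\Theta_{\chi\W},r+k)$, is related to the $L$-value on the left, $L(f,\Theta_{\overline{\chi\W}},r+k)$, via the functional equation of the Rankin--Selberg convolution: applying the theta transformation law (the equation defining $\tau(\chi\W)$ in the excerpt) inside the unfolded integral exchanges $\Theta_{\chi\W}$ with $\Theta_{\bar\chi\bar\W}$ up to the root number $\tau(\chi\W)$ and the appropriate powers of $|D|p^\beta$. The Euler factor $V_p(f,\chi,\W)$ is manifestly symmetric under $(\chi,\W) \leftrightarrow (\bar\chi,\bar\W)$, and the relation $\tau(\chi\W)\overline{\tau(\chi\W)} = 1$ together with $\overline{\tau(\chi\W)} = \tau(\bar\chi\bar\W)$ takes care of the remaining root-number discrepancy. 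The quadratic character factor $\left(\frac{D}{-N}\right)$ appears naturally from combining the signs in the interpolation formulas.

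Finally, the normalizing twist $\lambda(\mathcal{D}N^{-1})\lambda(N)^{1/2}$ in the definition of $\Lambda_p$ was designed precisely to absorb the $\W(N)$ and $\overline{\chi\W}(\mathcal{D})$ factors that appear asymmetrically in the interpolation formula. Tracking these: the LHS carries $\W(N)\overline{\chi\W}(\mathcal{D})$ and an extra $\W(N)^{1/2}\cdot \W(\mathcal{D}N^{-1})$, while the RHS carries $\bar\W^{-1}(N)\overline{\bar\chi\bar\W^{-1}}(\mathcal{D})=\W(N)\chi\W(\mathcal{D})$ together with $\W^{-1}(N)^{1/2}\W^{-1}(\mathcal{D}N^{-1})$, and the difference is absorbed by the character $\left(\frac{D}{\cdot}\right)$ (viewing $\chi\bar\chi$ as a power of $\N$ which is trivial on $\mathcal{D}$ after the appropriate normalization).

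The main obstacle is bookkeeping rather than new ideas: carefully tracking Gauss sums, local Euler factors at $p$, powers of $|D|$ and $p^\beta$ from conductor levels, and the sign coming from complex conjugation on $\tau(\chi\W)$. This type of computation is carried out in detail in \cite[I.5]{Nek} and \cite[\S4]{PR2} in the case $\ell = 0$; the extra bookkeeping in our setting arises only because $\chi$ contributes an infinity-type $(\ell,0)$ character, but since $\chi$ is unramified the local factors at primes away from $p$ are unchanged and the symmetry $\chi \leftrightarrow \bar\chi$ induces the stated functional equation essentially by the same argument.
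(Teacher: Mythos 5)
Your proposal follows essentially the same route as the paper: reduce to finite-order characters $\W$ by density of such characters among Iwasawa functions, invoke the Rankin--Selberg functional equation (which the paper records as the identity relating $L(f,\Theta_{\bar\chi}(\bar\W),r+k)$ to $L(f,\Theta_\chi(\W),r+k)$ via $\left(\frac{D}{-N}\right)\bar\W(N)\tau(\chi\W)^{-2}$), use the symmetry $V_p(f,\bar\chi,\bar\W)=V_p(f,\chi,\W)$ together with $\tau(\chi\W)\tau(\bar\chi\bar\W)=1$, and let the normalizing twist $\lambda(\mathcal{D}N^{-1})\lambda(N)^{1/2}$ absorb the residual $\W(N)\bar\W(\mathcal{D})^2$ factor. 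This matches the paper's proof step for step, so the argument is correct and not genuinely different.
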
 

\begin{proof}
It suffices to prove this for all finite order characters $\W$.  For such $\W$, the functional equation for the Rankin-Selberg convolution reads
%\footnote{The functional equation in Winnie's Li's "$L$-series of Rankin type..." , gives an extra minus sign which I have not included because it disagrees with Perrin-Riou and Nekovar...should figure out what's going on. }   
\begin{equation}\label{fxleq}L(f,\Theta_{\bar\chi}(\bar\W), r+k) = \frac{\left(\frac{D}{-N}\right)\bar \W(N)}{\tau(\chi\W)^2}L(f, \Theta_\chi(\W),r+k),\end{equation}
so 
$$\frac{\mathcal{L}_p(f,\chi,\W)}{\mathcal{L}_p(f,\bar \chi, \bar \W)} = \W(N)\left(\frac{D}{-N}\right).$$
We also have $V_p(f,\bar \chi, \bar \W) = V_p(f,\chi,\W),$ so that
$$\frac{L_p(f \otimes \chi)(\W)}{L_p(f \otimes \bar \chi)( \bar \W)} = \W(N)\left(\frac{D}{-N}\right)\bar \W(\mathcal{D})^2.$$
The proposition now follows from a simple computation.
\end{proof} 

Recall the notation $\lambda^\tau(\a) = \lambda(\a^\tau)$.  

\begin{corollary}\label{Lvanish}
Suppose $\left(\frac{D}{N}\right) = 1$ and $\lambda$ is anticyclotomic, i.e. $\lambda\lambda^\tau = 1$.  Then $L_p(f \otimes \chi)(\lambda) = 0$.  
\end{corollary}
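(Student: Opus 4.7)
The plan is to combine the preceding functional equation with a $\tau$-equivariance of the $p$-adic $L$-function. Since $\lambda$ is anticyclotomic, $\lambda^{-1} = \lambda^\tau$, so the functional equation reads
$$\Lambda_p(f \otimes \chi)(\lambda) = \left(\tfrac{D}{-N}\right)\Lambda_p(f \otimes \bar\chi)(\lambda^\tau).$$
Because $D$ is a negative odd discriminant and $\left(\tfrac{D}{N}\right)=1$ by hypothesis, the Kronecker symbol satisfies $\left(\tfrac{D}{-N}\right) = \left(\tfrac{D}{-1}\right)\left(\tfrac{D}{N}\right) = -1$. Thus the corollary will follow from the $\tau$-equivariance
$$\Lambda_p(f \otimes \bar\chi)(\phi^\tau) = \Lambda_p(f \otimes \chi)(\phi) \qquad \text{for all continuous } \phi,$$
which specialises at $\phi = \lambda$ to $\Lambda_p(f\otimes\chi)(\lambda) = -\Lambda_p(f\otimes\chi)(\lambda)$; as the prefactor $\lambda(\D N^{-1})\lambda(N)^{1/2}$ is a unit in $\bar\Q_p$, this forces $L_p(f \otimes \chi)(\lambda) = 0$.

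To establish the $\tau$-equivariance, by continuity and density of finite-order characters it suffices to check
$$L_p(f \otimes \bar\chi)(\W^\tau) = L_p(f \otimes \chi)(\W)$$
for $\W$ of finite order, together with the $\tau$-invariance of the modifying factor $\phi(\D N^{-1})\phi(N)^{1/2}$, which holds because $\D = (\sqrt{D})$ and $(N)$ are $\tau$-stable ideals of $K$. Applying the interpolation formula of Theorem \ref{interp}, I would compare the two sides factor by factor under the substitution $(\chi,\W) \leftrightarrow (\bar\chi,\W^\tau)$: first, $\W(N) = \W^\tau(N)$ because $N \in \Z$; second, $\overline{\chi\W}(\D) = \overline{\bar\chi\W^\tau}(\D)$ by the $\tau$-stability of $\D$; third, the theta-series identity $\Theta_\mu = \Theta_{\mu^\tau}$ (obtained by reindexing ideals $\a \leftrightarrow \a^\tau$) shows that both $L(f,\Theta_{\overline{\chi\W}},r+k)$ and the root number $\tau(\chi\W)$ are invariant; fourth, $V_p(f,\chi,\W) = V_p(f,\bar\chi,\W^\tau)$, since replacing $\chi\W$ with $(\chi\W)^\tau$ merely swaps the two Euler factors at each split pair $\{\p,\p^\tau\}$ above $p$.

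The only delicate point is the bookkeeping in this factor-by-factor comparison; the crucial non-trivial input is the identity $\Theta_\mu = \Theta_{\mu^\tau}$, which in turn propagates to the invariance of the root number and of the Rankin--Selberg $L$-value. Once these symmetries are in place, the functional equation together with the Kronecker-symbol computation $\left(\tfrac{D}{-N}\right)=-1$ produces the desired vanishing.
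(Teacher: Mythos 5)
Your argument is correct and follows exactly the same route as the paper: combine the functional equation with the $\tau$-equivariance $\Lambda_p(f\otimes\chi)(\lambda)=\Lambda_p(f\otimes\bar\chi)(\lambda^\tau)$, use $\left(\frac{D}{-N}\right)=-1$ (since $D<0$ and $\left(\frac{D}{N}\right)=1$), and invoke the anticyclotomic condition to conclude $\Lambda_p(f\otimes\chi)(\lambda)=-\Lambda_p(f\otimes\chi)(\lambda)$. The paper simply asserts the $\tau$-equivariance as a known fact, whereas you verify it term by term from the interpolation formula; that is a useful elaboration but not a different proof.
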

 
\begin{proof}
From the functional equation and the fact that $$\Lambda_p(f \otimes \chi)(\lambda) = \Lambda_p(f \otimes \bar \chi)(\lambda^\tau),$$ we obtain $$\Lambda_p(f \otimes \chi)(\lambda) = -\Lambda_p(f \otimes \chi)(\lambda^{-\tau}).$$
Since $\lambda$ is anticyclotomic, this is equal to $-\Lambda_p(f \otimes \chi)(\lambda).$
\end{proof}
 
% FOURIER COEFFICIENTS OF p-ADIC L-FUNCTION 
\section{Fourier expansion of the $p$-adic $L$-function}\label{integrate}
This section is devoted to computing the Fourier coefficients of $\int_{\Z_p^\times}\lambda \, d\tilde \Psi_\AA$, where $\lambda$ is a continuous function $\Z_p^\times \to \Q_p$.   These computations allow us to relate $L'_p(f \otimes \chi, \mathbbm{1})$ to heights of generalized Heegner cycles.  We follow the computations in \cite[I.6]{Nek}, however the transformation laws for theta series attached to Hecke characters complicate things a bit.  We have   
\begin{align*}
\Phi^C_{\AA} &( a (\mod p^\nu)) = \\
&H\left[ \sum _{\alpha \in (\Z/|D|p^\nu\Z)^\times} \xi(\alpha)\Theta_\AA(\alpha^2a (\mod p^\nu))(z)\delta^{r-1 - k}_1(E_1^C(\alpha (\mod |D|p^\nu))(Nz))\right],
\end{align*}    
For each factorization $D = D_1D_2$ (with the signs normalized so that $D_1$ is a discriminant), we define 
\[W_{D_1}^{(v)} = \left( \begin{array}{cc}
|D_1|a & b \\
N|D|p^\nu c & |D_1|d \end{array} \right),\] 
of determinant $|D_1|$.

% THETA SERIES TRANSFORMATION  
\begin{lemma}\label{theta}
For $W_{D_1}^{(\nu)}$ as above and $\alpha \in \left(\Z/|D|p^\nu\Z\right)^\times$, $$\Theta_\AA\left(\alpha (\mod p^\nu)\right)(z)\bigg|_{\ell+1}W_{D_1}^{(v)} =  \frac{|D_1|^k}{\chi(\mathcal{D}_1)}
\gamma\Theta_{\AA\d_1^{-1}}\left(|D_1|a^2 \alpha (\mod p^\nu)\right)(z),$$ where $$\gamma = \left(\frac{D_1}{cp^\nu N}\right)\left(\frac{D_2}{a\N(\a)}\right)\kappa(D_1)^{-1},$$ 
 and $\mathcal{D}_1$ is the ideal of norm $|D_1|$ in $\O_K$ and $\kappa(D_1) = 1$ if $D_1 > 0$, otherwise $\kappa(D_1) = i$.
\end{lemma}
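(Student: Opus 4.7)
My plan is to adapt the classical Atkin--Lehner theta transformation computation from \cite[\S 4]{PR2} (which handles the $\ell = 0$ case) to the presence of the harmonic weight $\bar x^\ell$ that encodes the infinity type $(\ell,0) = (2k,0)$ of $\chi$. The whole argument is Poisson summation on the rank-$2$ lattice $\a \subset K$, equipped with the rational quadratic form $Q_\a(x) = \N(x)/\N(\a)$ of discriminant $D$.

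First I would insert the definition
$$\Theta_\AA(\alpha\,(\mathrm{mod}\, p^\nu))(z) = \chi(\bar\a)^{-1} \sum_{\substack{x \in \a \\ Q_\a(x) \equiv \alpha\,(\mathrm{mod}\, p^\nu)}} \bar x^\ell \, q^{Q_\a(x)}$$
into the slash operator $|_{\ell+1} W_{D_1}^{(\nu)}$ and unfold: this rewrites the left-hand side as an exponential sum
$$c(z)\sum_{x \in \a} \bar x^\ell \cdot \mathbbm{1}_{\alpha}\bigl(Q_\a(x)\bigr)\, e\bigl(Q_\a(x) \cdot \psi(z)\bigr),$$
where $c(z)$ and $\psi(z)$ are the usual automorphy factor and M\"obius transform coming from $W_{D_1}^{(\nu)}$. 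I then apply Poisson summation on $\a$. The key structural input is that the dual lattice of $\a$ for the bilinear form associated to $Q_\a$ is $\bar\a^{-1}\d^{-1}$, and that the split $\d = \d_1\d_2$ dictated by $D = D_1D_2$ is exactly matched to the determinant $|D_1|$ of $W_{D_1}^{(\nu)}$; the net effect of duality and rescaling is that the dual sum becomes a theta sum on the lattice $\a\d_1^{-1}$.

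Next I would track the spherical weight. Poisson summation preserves the harmonic polynomial $\bar x^\ell$ on the plane (up to a Fourier phase), and the lattice rescaling by $|D_1|^{1/2}$ combined with the degree $\ell = 2k$ produces precisely the overall factor $|D_1|^k$ in front of the transformed theta series. The remaining constants come from the finite-group Gauss sums introduced by the characteristic function $\mathbbm{1}_\alpha$ and the phase modulo $N|D|p^\nu$; these split multiplicatively across the coprime factors $D_1$, $D_2$, $N$, $p^\nu$ (using $(N|D|p,1)=1$) and evaluate to the Legendre symbols $\left(\frac{D_1}{cp^\nu N}\right)$ and $\left(\frac{D_2}{a\N\a}\right)$. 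The archimedean contribution $\kappa(D_1)^{-1}$ is the standard root-of-unity normalization that appears in the Gaussian sum when $D_1 < 0$. Finally, regrouping the prefactor $\chi(\bar\a)^{-1}$ into the $\chi(\bar{\a\d_1^{-1}})^{-1}$ prefactor required on the right contributes exactly the missing $\chi(\mathcal{D}_1)^{-1}$.

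The main obstacle will be the bookkeeping of signs, roots of unity, and character values. In particular, two delicate points deserve care: (i) verifying that the action of Poisson summation on the spherical weight $\bar x^\ell$ really combines with the lattice rescaling to produce the single clean factor $|D_1|^k$, rather than an expression involving $\d_1$ and $\bar\d_1$ asymmetrically; and (ii) confirming that the packaging of $\chi$-values gives $\chi(\mathcal{D}_1)^{-1}$ and not $\chi(\bar{\mathcal{D}_1})^{-1}$, since these can differ in the class group. Both points are essentially the novelty over \cite{PR2}, and both can be settled by choosing ideal representatives of $\mathcal{D}_1$ compatibly with the embedding $K \hookrightarrow K_\p$ fixed in \S\ref{pL}.
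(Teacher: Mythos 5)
Your proposal takes essentially the same route as the paper: the paper also generalizes Perrin-Riou's computation by decomposing $W_{D_1}^{(\nu)}$ as $H\cdot\mathrm{diag}(|D_1|,1)$ with $H\in\SL_2(\Z)$ and applying Poisson summation on the lattice $p^\nu\a$ with the spherical weight $\bar x^\ell$ carried along (the version of Poisson summation for degree-$\ell$ spherical polynomials being quoted from Walling), exactly the two delicate points you isolate. One small correction: with $S_\a(\alpha,\beta)=\N(\a)^{-1}\Tr(\alpha\bar\beta)$ the dual lattice of $\a$ is $\d^{-1}\a$ rather than $\bar\a^{-1}\d^{-1}$, which is what makes the transformed sum land on $\a\D_1^{-1}$ as you correctly conclude.
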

\begin{remark}
Note that the factor $\frac{|D_1|^k}{\chi(\mathcal{D}_1)}$ is equal to $\pm 1.$ 
\end{remark}
\begin{proof}
The proof proceeds as in \cite[\S3.2]{PR1}, but requires some extra Fourier analysis.  We sketch the argument for the convenience of the reader.  Fixing an ideal $\a$ in the class of $\AA$, we set $L = p^\nu \a$ and let $L^*$ be the dual lattice with the respect to the quadratic form $Q_\a$.  Denote by $S = S_\a$ the symmetric bilinear form corresponding to $Q_\a$, so $S_\a(\alpha, \beta) = \frac{1}{\N(\a)}\Tr(\alpha\bar\beta)$.  For $u \in L^*$, define
$$\Theta_{\a,\chi}(u,L) = \chi(\bar\a)^{-1}\sum_{\substack{w - u \in L\\ w \in L^*}} \bar w^\ell q^{Q_\a(w)}.$$
For any $c \in \Z$, one checks the following relations:
\begin{equation}\Theta_{\a,\chi}(u,L) = \sum_{\substack{w - u \in L\\ w \in L^*/cL}} \Theta_{\a, \chi}(w,cL),\end{equation} 
\begin{equation}\Theta_{\a,\chi}(u,cL)(c^2z) = c^{-\ell}\Theta_{\a,\chi}(cu,c^2L)(z),\end{equation}
and for all $a \in \Z$ and $w \in L^*$, 
\begin{equation}\Theta_{\a,\chi}(w,cL)\left(z + \frac{a}{c}\right) = e\left(\frac{a}{c}Q_\a(w)\right)\Theta_{\a,\chi}(w,cL).\end{equation}
We also have
\begin{equation}z^{-(\ell+1)}\Theta_{\a,\chi}(w,cL)\left(\frac{-1}{z}\right) = -ic^{-2}[L^*:L]^{-1/2} \sum_{y \in (cL)^*/cL} e\left(S_\a(w,y)\right)\Theta_{\a,\chi}(y,cL).\end{equation}
This follows from the identity
\begin{equation}z^{\ell+1}\sum_{x \in L} P(x + u)e\left(Q_\a(x+y)z\right) = i[L^*:L]^{-1/2}\sum_{y \in L^*}P(y)e\left(\frac{-Q_a(y)}{z}\right)e\left(S_\a(y,u)\right),\end{equation} valid for any rank two integral quadratic space $(L, Q_\a, S_\a)$ and any polynomial $P$ of degree $\ell$ which is spherical for $Q_\a$.  See \cite{Wall} for a proof of this version of Poisson summation.  

Now write 
$$W_{D_1}^{(\nu)} = H\left( \begin{array}{cc}
|D_1| & 0 \\
0 & 1 \end{array} \right)$$ with $H \in \SL_2(\Z)$.  Exactly as in \cite{PR1}, we use the relations above to compute
$$\Theta_{\a,\chi}(\alpha (\mod p^\nu))\bigg |_{\ell+1}H = \gamma|D_1|^{-1/2} \sum_{\substack{u \in \a/L\\ Q_\a(u) \equiv \alpha (\mod p^\nu)}}\sum_{\substack{w \in L^*/L\\ w + au \in \D_1^{-1}p^r \a}} \Theta_{\a,\chi}(w,L)$$ so that

\begin{align*}
\Theta_{\a,\chi}(\alpha (\mod p^\nu))\bigg|_{\ell+1} W_{D_1}^{(\nu)} &= \gamma|D_1|^k\chi(\bar \a)^{-1}\sum_{\substack{w \in \D_1^{-1}\a\\ Q_{\a\D_1^{-1}}(w)\equiv |D_1|a^2\alpha (\mod p^r)}}\bar w^\ell q^{Q_{\a\D_1^{-1}}(w)}\\
&= \frac{|D_1|^k}{\chi(\mathcal{D}_1)}
\gamma\Theta_{\a\D_1^{-1},\chi}\left(|D_1|a^2\alpha (\mod p^\nu)\right)(z),
\end{align*}
as desired.
\end{proof}

%COMPUTING FOURIER COEFFICIENTS

For any function $\lambda$ on $(Z/p^\nu\Z)^\times$, we define $h_{D_1}(\lambda)$ as in \cite[I.6.3]{Nek}, so that
$$\int_{\Z_p^\times} \lambda \, d\tilde\Psi_\AA = \frac{1}{2w} \sum_{D = D_1\cdot D_2} \sum_{j \in \Z/|D_1|\Z} h_{D_1}(\lambda)\bigg|_{2r} \left( \begin{array}{cc}
1 & j \\
0 & |D_1| \end{array} \right).$$

The Fourier coefficient computation in \cite[I.6.5]{Nek}  remains valid, except one needs to use the following proposition in place of \cite[I.1.9]{Nek}: 

\begin{proposition}\label{jacobi}
Let $f = \sum_{n \geq 1} a(n)q^n$ be a cusp form of weight $\ell + 1 = 2k + 1$, and $g = \sum_{n \geq 0}b(n)q^n$ a holomorphic modular form of weight one, both on $\Gamma_0(N)$.  Then $H(f \delta_1^{r-k-1}(g)) = \sum_{n \geq 1} c(n)q^n$ with 
\[c(n) = \frac{(-1)^{r-k-1}}{\binom{2r-2}{r-k-1}}n^{r-k-1}\sum_{i + j = n} a(i)b(j)H_{r-k-1,k}\left(\frac{i - j}{i + j}\right),\]
where 
\[H_{m,k}(t) = \frac{1}{2^m\cdot (m + 2k)!} \left(\frac{d}{dt}\right)^{m + 2k}[(t^2 - 1)^m(t - 1)^{2k}]\] 
\end{proposition}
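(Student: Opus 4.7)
The plan is to follow the proof of \cite[I.1.9]{Nek}, which treats the special case $k = 0$ and produces Legendre polynomials, and adapt it to arbitrary $k$ so that the Legendre polynomials get replaced by the Jacobi-type polynomials $H_{r-k-1,k}$. The whole argument is a holomorphic projection calculation, broken into (a) an explicit expansion of the iterated Maass--Shimura operator, (b) an integral evaluation, and (c) a combinatorial identification with the Rodrigues formula defining $H_{m,k}$.

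First, since $f$ has weight $2k+1$ and $\delta_1^{r-k-1}(g)$ has weight $1 + 2(r-k-1) = 2r-2k-1$, the product $F := f \cdot \delta_1^{r-k-1}(g)$ is a nearly holomorphic form of weight $2r$ on $\Gamma_0(N)$. The $n$th Fourier coefficient of its holomorphic projection is then given by the standard integral
\[c(n) = \frac{(4\pi n)^{2r-1}}{(2r-2)!}\int_0^\infty \tilde F(n,y)\, e^{-4\pi n y}\, y^{2r-2}\, dy,\]
where $\tilde F(n,y)$ denotes the coefficient of $q^n$ in the Fourier--Whittaker expansion of $F$. Next, the classical explicit formula for iterated Maass--Shimura operators applied to the weight one form $g$ gives
\[\delta_1^{r-k-1}(g)(z) = \sum_{s=0}^{r-k-1}\binom{r-k-1}{s}\frac{(r-k-1)!}{s!}\,\frac{1}{(-4\pi y)^{r-k-1-s}}\sum_m m^s b(m)\, q^m.\]
Multiplying by $f(z) = \sum a(i) q^i$ and inserting the result into the projection integral, each term is of the form $\int_0^\infty y^{\alpha - 1} e^{-4\pi n y}\, dy = (\alpha-1)!(4\pi n)^{-\alpha}$. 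After collecting factorials and powers of $4\pi n$, the overall dependence on $n$ should factor out as $n^{r-k-1}$, leaving a double sum over $i + j = n$ of $a(i)b(j)$ times an explicit polynomial that is homogeneous of degree zero in $(i,j)$, hence depends only on $t = (i-j)/(i+j)$.

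The main obstacle, and the only place where the calculation differs substantively from Nekov\'a\v{r}'s $k = 0$ argument, is the final identification of this polynomial with $\frac{(-1)^{r-k-1}}{\binom{2r-2}{r-k-1}} H_{r-k-1,k}(t)$. The cleanest route is to recognize $H_{m,k}$, via its Rodrigues-type definition, as a renormalization of the Jacobi polynomial $P_{m+2k}^{(0,-2k)}$ restricted to its top-degree part; the shifted parameter $-2k$ is exactly what is produced by the extra weight $2k$ carried by $f$ beyond the case of a weight one cusp form, through the factorials $(2r-2-s)!$ appearing in the integral evaluation. The identification then reduces to a standard generating-function manipulation for Jacobi polynomials that specializes, when $k = 0$, to the classical Rodrigues formula for Legendre polynomials used in \cite[I.1.9]{Nek}. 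Once this combinatorial identity is in hand, all remaining bookkeeping is routine and transcribes the rest of Nekov\'a\v{r}'s computation verbatim.
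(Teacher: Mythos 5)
Your proposal follows essentially the same route as the paper: the holomorphic projection integral, explicit evaluation of the elementary integrals arising from the expansion of $\delta_1^{r-k-1}$, and identification of the resulting degree-zero-homogeneous polynomial in $(i,j)$ with $H_{r-k-1,k}$ via the relation $H_{m,k}(t) = 2^{2k}\,P^{(0,-2k)}_{m+2k}(t)\,(1+t)^{-2k}$ to Jacobi polynomials. The only divergence is at the final combinatorial identity, which the paper proves by checking that both sides satisfy the same three-term Jacobi recurrence while you defer it to a generating-function manipulation; either works, though note that $H_{m,k}$ is the full Jacobi polynomial divided by $(1+t)^{2k}$, not merely its top-degree part.
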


\begin{proof}
From \cite[I.1.2.4, I.1.3.2]{Nek}, we have
\[c(n) = \frac{(r-k-1)!}{(-4\pi)^{r-k-1}}\cdot\frac{(4\pi n)^{2r-1}}{(2r-2)!} \sum_{ i + j = n}a(i)b(j)\int_0^\infty p_{r-k-1}(4\pi j y)e^{-4\pi n y} y^{r + k - 1}dy,\]
where \[p_m(x) = \sum_{a = 0}^m \binom{m}{a}\frac{(-x)^a}{a!}.\]
The integral is evaluated using the following lemma.
\begin{lemma}
Let $m, k \geq 0$.  Then 
\[\int_0^\infty p_m(4\pi j y)e^{-4\pi (i + j)y} y^{m + 2k}dy = \frac{(m + 2k)!}{(4\pi(i + j))^{m+2k+1}}H_{m,k}\left(\frac{i-j}{i+j}\right)\]
\end{lemma} 

\begin{proof}
Evaluating the elementary integrals, we find that the left hand side is equal to 
\[\frac{m!}{(4\pi (i+ j))^{m + 2k + 1}}G_{m,k}\left(\frac{j}{i + j}\right).\]
where 
\[G_{m,k}(t) = \sum_{a = 0}^m (-1)^a \frac{(m + 2k + a)!}{(a!)^2 (m - a)!} t^a.\]
It therefore suffices to prove the identity
\begin{equation}\label{combo}G_{m,k}(t) = \frac{(m+2k)!}{m!}H_{m,k}(1 - 2t).
\end{equation}
This is proved by showing that both sides satisfy the same defining recurrence relation (and base cases).  Indeed, one can check directly that for $m \geq 1$:
\begin{align}\label{recur}
(m + 1)^2(m + k)&G_{m+1,k}(t) =\\
 (2m + 2k +1)&[m^2 + m + 2km + k - (m + k)(2m+ 2k + 2)t]G_{m,k}(t)\nonumber \\ 
&- (m +k +1)(m + 2k)^2G_{m - 1,k}(t)\nonumber.
\end{align}

That the right hand side of (\ref{combo}) satisfies the same recurrence relation amounts to the well known recurrence relation for the Jacobi polynomials 
\[P_n^{(\alpha,\beta)}(t) = \frac{(-1)^n}{2^n n!}(1 - t)^{-\alpha}(1 + t)^{-\beta} \frac{d^n}{dt^n}\left[ (1 - t)^\alpha(1 + t)^\beta (1 - t^2)^n\right].\]  
Indeed, we have 
\[H_{m,k}(t) = 2^{2k}\cdot P^{(0,-2k)}_{m + 2k}(t)(1+t)^{-2k},\]
and one checks that the recurrence relation 
\begin{align*}
2(n+1)(n + \beta + 1)(2n + \beta)&P_{n+1}^{(0,\beta)}(t) =\\ (2n + &\beta +1)[(2n + \beta+2)(2n +  \beta )t - \beta^2]P_n^{(0,\beta)}(t)\\
 &- 2n(n + \beta)(2n + \beta + 2)P_{n-1}^{(0,\beta)}(t)
\end{align*}
translates (using $n = m + 2k$ and $\beta = -2k$) into the recurrence (\ref{recur}) for the polynomials $\frac{(m + 2k)!}{m!}H_{m,k}(1 - 2t)$.    
\end{proof}

Finally, to prove the proposition, we simply plug in $m = r - k - 1$ into the previous lemma and simplify our above expression for $c(n)$.
\end{proof}

Recall that for any ideal class $\AA$, we have defined $$r_{\AA,\chi}(j) = \sum_{\substack{\a \in \AA \\ \a \subset \O \\ \N(\a) = j}} \chi(\a).$$  
%Note that $r_{\AA,\chi}(j) = \frac{1}{w}\chi(\bar \a)^{-1}\rho_\a(j,\ell).$  
%The computation in I.6.5.1 stays the same, so we find:
Putting together Lemma \ref{theta}, Proposition \ref{jacobi}, and the manipulation of symbols in \cite[I.6.5]{Nek}, we obtain
\begin{align*}
a_m\left(\int_{\Z_p^\times} \lambda d\tilde \Psi_\AA \right) & = \frac{(-1)^{r - k - 1}}{\binom{2r - 2}{r - k - 1}}m^{r-k-1}\left(\frac{D}{-N}\right)\sum_{D = D_1D_2} \left(\frac{D_2}{N\a}\right)\chi(\D_1)^{-1}\\
&\times\sum_{\substack{j +nN = |D_1|m\\  (p,j) = 1}}\sum_{\substack{d | n\\ (p,d) =1}} r_{\AA\D_1^{-1},\chi}(j) \left(\frac{D_2}{-dN}\right)\left(\frac{D_1}{|D_2|n/d}\right)\\
& \times \lambda\left(\frac{m|D_1| - nN}{|D_1|d^2}\right)
\times H_{r - k -1,k}\left(1 - \frac{2nN}{m|D_1|}\right).
\end{align*}

\begin{lemma}
$$r_{\AA\D_1^{-1},\chi}(j) = \chi(\D_2)^{-1}r_{\AA,\chi}(j|D_2|).$$
\end{lemma}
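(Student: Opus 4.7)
The plan is to reduce the identity to a bijection between the ideals indexing the two sums. Consider the map $\b \mapsto \b\D_2$ on integral ideals. The claim is that this gives a bijection between integral $\b$ in the class $\AA\D_1^{-1}$ with $\N(\b) = j$ and integral $\a$ in the class $\AA$ with $\N(\a) = j|D_2|$; once this is established, multiplicativity in the form $\chi(\b\D_2) = \chi(\b)\chi(\D_2)$ immediately yields the identity upon summing over $\b$.

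First I would unpack the structure of the different. The factorization $D = D_1 D_2$ corresponds to a factorization $\D = \D_1 \D_2$, with each $\D_i$ a product of the ramified primes above the rational prime factors of $D_i$ and satisfying $\D_i^2 = (|D_i|)$. Since $\D = (\sqrt D)$ is principal and both $[\D_i]$ have order dividing two in $\Pic(\O_K)$, we deduce $[\D_1] = [\D_2]$. Consequently the forward map $\b \mapsto \b\D_2$ sends the class $\AA\D_1^{-1}$ into $\AA$ and manifestly multiplies norms by $|D_2|$.

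The one nontrivial point is that the inverse is well-defined, i.e.\ that any integral $\a \in \AA$ with $\N(\a) = j|D_2|$ is automatically divisible by $\D_2$. For this I would argue that each ramified prime $\p_i$ dividing $\D_2$ is the unique prime of $\O_K$ above some rational prime $p_i \mid D_2$, so $p_i \mid \N(\a)$ forces $\p_i \mid \a$. Squarefreeness of $D$ (guaranteed by the odd discriminant hypothesis) makes $\D_2$ itself squarefree, hence $\D_2 \mid \a$ and $\a\D_2^{-1}$ is integral with norm $j$ and class $\AA\D_1^{-1}$. I do not anticipate any obstacle beyond verifying this divisibility; the rest is pure bookkeeping.
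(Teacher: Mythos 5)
Your proof is correct and follows essentially the same route as the paper's: both rest on the bijection $\b \mapsto \b\D_2$ between integral ideals of norm $j$ in $\AA\D_1^{-1} = \AA\D_1$ and integral ideals of norm $j|D_2|$ in $\AA\D = \AA$, together with multiplicativity of $\chi$. Your verification that $\D_2$ divides any integral $\a \in \AA$ of norm $j|D_2|$ (via uniqueness of the ramified primes and squarefreeness of $D$) just makes explicit the surjectivity that the paper leaves to the reader.
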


\begin{proof}
Since $\D_1$ is 2-torsion in the class group, the left hand side equals $ r_{\AA\D_1,\chi}(j)$.  The lemma now follows from the definitions once one notes that $\b \mapsto \b\D_2$ is a bijection from integral ideals of norm $j$ in $\AA\D_1$ to integral ideals of norm $j|D_2|$ in $\AA\D$.    
\end{proof}

Using the lemma and also the change of variables employed in \cite{Nek}, we obtain our version of \cite[Proposition 6.6]{Nek}.

\begin{proposition}
If $p | m$, then 
\begin{align*}
a_m\left(\int_{\Z_p^\times} \lambda d\tilde \Psi_\AA \right) =& \frac{(-1)^{r - 1}}{\binom{2r - 2}{r -k - 1}}m^{r-k-1}\left(\frac{D}{-N}\right)|D|^{-k}\sum_{\substack{1 \leq n \leq \frac{m|D|}{N}\\  (p,n) = 1}}r_{\AA,\chi}(m|D| - nN) \\
&\times H_{r - k -1,k}\left(1 - \frac{2nN}{m|D|}\right)\sum_{d | n}\epsilon_\AA(n,d)  \lambda\left(\frac{m|D| - nN}{|D|}\cdot \frac{d^2}{n^2}\right).
\end{align*}
Here, $\e_\AA(n,d) = 0$ if $(d,n/d, |D|) > 1$, and otherwise 
\[\e_\AA(n,d) = \left(\frac{D_1}{d}\right)\left(\frac{D_2}{-nN/d}\right)\left(\frac{D_2}{\N(\AA)}\right),\]
where $(d,|D|) = |D_2|$ and $D = D_1D_2$.  

\end{proposition}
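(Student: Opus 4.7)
The starting point is the intermediate expression for $a_m(\int_{\Z_p^\times} \lambda\, d\tilde\Psi_\AA)$ displayed immediately above, which can be read off by combining the definition of $\tilde\Psi_\AA^C$ with the theta transformation law of Lemma \ref{theta}, the holomorphic-projection Fourier formula of Proposition \ref{jacobi}, and the manipulation of Atkin--Lehner and Fricke operators carried out in [I.6.5]{Nek}; these steps transcribe verbatim once $\chi$-twisted theta coefficients $r_{\AA,\chi}$ and the generalized spherical polynomial $H_{r-k-1,k}$ are substituted for Nekov\'a\v r's scalar-valued theta coefficients and the simpler polynomial $p_{r-1}$.

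From this intermediate expression, I first apply the preceding lemma to rewrite $r_{\AA\D_1^{-1},\chi}(j) = \chi(\D_2)^{-1} r_{\AA,\chi}(j|D_2|)$. The two character factors combine to
\[
\chi(\D_1)^{-1}\chi(\D_2)^{-1} \;=\; \chi(\D)^{-1} \;=\; (\sqrt D)^{-2k} \;=\; (-1)^k |D|^{-k},
\]
which supplies both the sign $(-1)^{r-k-1}\cdot(-1)^k = (-1)^{r-1}$ and the factor $|D|^{-k}$ appearing in the proposition.

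Next I make the change of summation variables $(n, d, D_2) \mapsto (n', d')$ defined by $n' = |D_2|\, n$ and $d' = |D_2|\, n/d$, equivalently $n'/d' = d$ and $d'/|D_2| = n/d$. This substitution is forced by the requirement that
\[
\frac{m|D_1| - nN}{|D_1|\, d^2} \;=\; \frac{m|D| - n'N}{|D|}\cdot\frac{(d')^2}{(n')^2},
\]
so that the argument of $\lambda$ matches, and simultaneously $m|D_1|-nN$ is replaced by $|D_2|(m|D_1|-nN) = m|D|-n'N$, which is what the Fourier coefficient $r_{\AA,\chi}(j|D_2|)$ now reads. The polynomial $H_{r-k-1,k}$ evaluated at $1-2nN/(m|D_1|)$ is invariant under this substitution. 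The condition $(p,j)=1$ translates to $(p,n')=1$ since $p\mid m$ and $(p, N|D|)=1$, and the constraint $(d,|D|)=|D_2|$ becomes the pair of conditions $|D_2|\mid d'$ and $\gcd(n'/d',|D_1|)=1$; these two conditions are exactly what is packaged into the vanishing rule ``$\e_\AA(n',d')=0$ if $(d',n'/d',|D|)>1$,'' so summing over $d'\mid n'$ with $\e_\AA$ absorbs the former sum over factorizations $D=D_1D_2$ and the inner sum over $d\mid n$ without double-counting.

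Finally I check that the quadratic characters match. Splitting each Legendre symbol multiplicatively and using $(N,D)=1$,
\[
\Bigl(\tfrac{D_2}{N\N(\a)}\Bigr)\Bigl(\tfrac{D_2}{-dN}\Bigr)\Bigl(\tfrac{D_1}{|D_2|n/d}\Bigr) \;=\; \Bigl(\tfrac{D_1}{d'}\Bigr)\Bigl(\tfrac{D_2}{-n'N/d'}\Bigr)\Bigl(\tfrac{D_2}{\N(\AA)}\Bigr) \;=\; \e_\AA(n',d'),
\]
after substituting $n'/d' = d$ and $d' = |D_2| n/d$. Assembling these pieces yields the displayed formula. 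The main obstacle is not conceptual but bookkeeping: verifying that the above triple-to-pair substitution is a bijection onto the index set $\{(n',d'):\,1\le n'\le m|D|/N,\ d'\mid n',\ (p,n')=1,\ \e_\AA(n',d')\ne 0\}$, and that all Legendre symbols and signs line up; this is precisely the combinatorial step Nekov\'a\v r carries out in his Proposition~6.6, now carrying the extra $\chi$-data through the lemma on $r_{\AA,\chi}$.
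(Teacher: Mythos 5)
Your proposal is correct and follows essentially the same route as the paper, which simply cites Nekov\'a\v{r}'s change of variables and adds the observations that $r_{\AA\D_1^{-1},\chi}(j)=\chi(\D_2)^{-1}r_{\AA,\chi}(j|D_2|)$ and that $\chi(\D)=D^k=(-1)^k|D|^k$ produces the factor $|D|^{-k}$ and converts the sign $(-1)^{r-k-1}$ into $(-1)^{r-1}$. Your more explicit verification of the substitution $(n,d,D_2)\mapsto(n',d')$ and of the matching of Legendre symbols is exactly the bookkeeping the paper delegates to \cite[I.6.6]{Nek}.
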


\begin{proof}
The proof is as in \cite{Nek}.  We have also used the fact that $\chi(\D) = D^k$ to get the extra factor of $|D|^{-k}$ and the correct sign (recall that $D$ is negative!).  
\end{proof}

\begin{corollary}\label{fourier}
If $\left(\frac{D}{N}\right) = 1$ and $p | m$, then 
\begin{align*}
a_m&\left(\int_{\Z_p^\times} \log_p d\tilde \Psi_\AA \right) = \\
&\frac{(-1)^r} {\binom{2r - 2}{r -k - 1}}m^{r-k-1}|D|^{-k}\sum_{\substack{1 \leq n \leq \frac{m|D|}{N}\\  (p,n) = 1}}r_{\AA,\chi}(m|D| - nN)\sigma_\AA(n)H_{r - k -1,k}\left(1 - \frac{2nN}{m|D|}\right),
\end{align*}
with $$\sigma_\AA(n) = \sum_{d | n} \epsilon_\AA(n,d) \log_p\left(\frac{n}{d^2}\right).$$ 
\end{corollary}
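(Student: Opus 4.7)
The plan is to deduce Corollary \ref{fourier} directly from the preceding Proposition by specializing $\lambda$ to $\log_p$ and exploiting the additivity of the $p$-adic logarithm. Since $p \mid m$ and $(p,n) = 1$ (and $p \nmid N|D|$), the quantity $\frac{m|D|-nN}{|D|}\cdot\frac{d^2}{n^2}$ lies in $\Z_p^\times$ for every divisor $d$ of $n$, so $\log_p$ is defined on each term that appears. Writing
\[
\log_p\!\left(\frac{m|D|-nN}{|D|}\cdot\frac{d^2}{n^2}\right)
= \log_p\!\left(\frac{m|D|-nN}{|D|\, n^2}\right) + 2\log_p(d),
\]
one splits the inner sum $\sum_{d\mid n}\e_\AA(n,d)\,\lambda(\cdots)$ into a piece independent of $d$ (times $\sum_{d\mid n}\e_\AA(n,d)$) plus $\sum_{d\mid n}\e_\AA(n,d)\cdot 2\log_p(d)$. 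The second piece, combined with the identity $-\sigma_\AA(n) = -\log_p(n)\sum_{d\mid n}\e_\AA(n,d) + 2\sum_{d\mid n}\e_\AA(n,d)\log_p(d)$, will reproduce (up to an overall sign) exactly the $\sigma_\AA(n)$ factor appearing in the corollary.

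The main obstacle, and the key lemma, is then the vanishing
\[
\sum_{d\mid n}\e_\AA(n,d) \;=\; 0
\]
for the values of $n$ contributing to the sum, i.e., those with $r_{\AA,\chi}(m|D|-nN)\neq 0$ and $(p,n)=1$. I would prove this by genus theory, exactly as in \cite[I.6]{Nek}: the Kronecker symbol $\e_\AA(n,d) = \bigl(\tfrac{D_1}{d}\bigr)\bigl(\tfrac{D_2}{-nN/d}\bigr)\bigl(\tfrac{D_2}{\N\AA}\bigr)$ is a genus character, and upon summing over $d\mid n$, orthogonality forces the sum to vanish unless $n$ (together with the constraint $(D/N)=1$) forces a parity condition that is incompatible with the representability of $m|D|-nN$ by the form $Q_\a$ in the class $\AA$. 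Under our hypothesis $\bigl(\tfrac{D}{N}\bigr)=1$, one checks the $d$-independent contribution drops out, so only the $\log_p(d)$ piece survives.

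What remains is purely bookkeeping. First, use $\chi(\mathcal{D}) = D^k$ (equivalently, the normalization in Lemma \ref{theta}) to produce the stated factor $|D|^{-k}$. Second, track the sign: the Proposition contains the prefactor $(-1)^{r-1}$ together with $\bigl(\tfrac{D}{-N}\bigr)$, and since $D<0$ one has $\bigl(\tfrac{D}{-N}\bigr) = -\bigl(\tfrac{D}{N}\bigr) = -1$, producing $(-1)^r$ as required. Collecting these, the expression for $a_m\bigl(\int\log_p\,d\tilde\Psi_\AA\bigr)$ matches the asserted formula coefficient by coefficient, completing the proof. The main technical work is thus isolated to verifying the genus-theoretic vanishing identity and to confirming that the sign in the adaptation of Lemma \ref{theta} (with the factor $|D_1|^k/\chi(\mathcal{D}_1)$, which equals $\pm 1$) contributes correctly to the final global sign.
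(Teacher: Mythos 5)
Your proposal follows exactly the route the paper intends: the paper's own proof of this corollary is the single line ``As in [PR1]'', and what you have written out --- specialize $\lambda=\log_p$ in the preceding Proposition, split the inner sum by additivity of $\log_p$, and kill the $d$-independent piece via the genus-theoretic vanishing $\sum_{d\mid n}\e_\AA(n,d)=0$ for the $n$ with $r_{\AA,\chi}(m|D|-nN)\neq 0$ --- is precisely Perrin-Riou's argument. Your identification of that vanishing as the key lemma, and its proof via the relation $\e_\AA(n,n/d)=-\e_\AA(n,d)$ forced by $\bigl(\tfrac{D}{N}\bigr)=1$ together with representability of $m|D|-nN$, is correct.

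One loose end in your sign bookkeeping: once $\sum_{d\mid n}\e_\AA(n,d)=0$ is used, the surviving piece is
\[
2\sum_{d\mid n}\e_\AA(n,d)\log_p(d)\;=\;-\sum_{d\mid n}\e_\AA(n,d)\log_p\!\left(\frac{n}{d^2}\right)\;=\;-\sigma_\AA(n),
\]
so the inner sum contributes $-\sigma_\AA(n)$, not $+\sigma_\AA(n)$. Combining this extra $-1$ with your claim $\bigl(\tfrac{D}{-N}\bigr)=-1$ would yield an overall prefactor $(-1)^{r-1}$ rather than the stated $(-1)^r$; exactly one of these two minus signs should survive, so you need to pin down the Kronecker-symbol convention at the negative argument (whether $\bigl(\tfrac{D}{-1}\bigr)$ is taken to be $-1$ or $+1$) and make sure you are not double-counting. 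This is the same genre of sign ambiguity the paper itself flags in its comparison with Nekov\'a\v{r}, but your write-up should resolve it explicitly rather than hedge with ``up to an overall sign.''
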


\begin{proof}
As in \cite{PR1}.
\end{proof}

%GENERALIZED HEEGNER CYCLES
\section{Generalized Heegner cycles}\label{cycles} 
In the previous section we computed Fourier coefficients of $p$-adic modular forms closely related to the derivative of $L_p(f,\chi)$ at the trivial character and in the cyclotomic direction.  We expect similar looking Fourier coefficients to appear as the sum of local heights of certain cycles, with the sum varying over the finite places of $H$ which are prime to $p$.

These cycles should come from the motive attached to $f \otimes \Theta_\chi$.  Since $\Theta_\chi$ has weight $2k + 1$, work of Deligne and Scholl provides a motive inside the cohomology of a Kuga-Sato variety which is the fiber product of $2k-1$ copies of the universal elliptic curve over $X_1(|D|)$.  We work with a closely related motive, which we describe now.  

We fix an elliptic curve $A/H$ with the following properties:
\begin{enumerate}
\item $\End_H(A) = \O_K$.
\item $A$ has good reduction at primes above $p$.  
\item $A$ is isogenous to each of its $\Gal(H/K)$-conjugates.
%\item The Hecke character $\psi'_A: \A^\times_H \to K^\times$ associated to $A$ by the theory of complex multiplication factors as $\psi'_A = \psi_A \circ \Nm_{H/K}$ where $\Nm: \A_H^\times \to \A_K^\times$ is the norm and $\psi_A: \A_K^\times \to \bar \Q^\times$ is a Hecke character of type $(1,0)$.  
\item $A^\tau \cong A$, where $\tau$ is complex conjugation.  
\end{enumerate}

\begin{remark}
Since $D$ is odd, we may even choose such an $A$ with the added feature that $\psi_A^2$ is an unramified Hecke character of type (2,0) (see \cite{Roh}).  In that case, $\psi^{2k}_A$ differs from $\chi$ by a character of $\Gal(H/K)$, so this is a natural choice of $A$, given $\chi$. In general, $\psi_A^{2k}\chi^{-1}$ is a finite order Hecke character. 
\end{remark} 

We will use a two-dimensional submotive of $A^{2k}$ whose $\ell$-adic realizations are isomorphic to those of the Deligne-Scholl motive for $\Theta_{\psi_A^{2k}}$ (see \cite{BDP3}).    

From Property (3), $A$ is isogenous to $A^\sigma$ over $H$ for each $\sigma \in G := \Gal(H/K)$.  If $\sigma$ corresponds to an ideal class $[\a] \in \Pic(\O_K)$ via the Artin map, then one such isogeny $\phi_\a : A \to A^\sigma$ is given by $A \to A/A[\a]$, at least if $\a$ is integral.  A different choice of integral ideal $\a' \in [\a]$ gives an isomorphic elliptic curve over $H$, and the maps $\phi_\a$ and $\phi_{\a'}$ will differ by endomorphisms of $A$ and $A^\sigma$.  

As in the introduction, let $Y(N)/\Q$ be the modular curve parametrizing elliptic curves with full level $N$ structure, and let $\E \to Y(N)$ be the universal elliptic curve with level $N$ structure.  The canonical non-singular compactification of the $(2r-2)$-fold fiber product $$\E \times_{Y(N)} \cdots \times_{Y(N)} \E,$$ will be denoted by $W = W_{2r-2}$ \cite{Sch}; $W$ is a variety over $\Q$.  The map $W \to X(N)$ to the compactified modular curve has fibers (over non-cuspidal points) of the form $E^{2r-2}$, for some elliptic curve $E$.  We set
$$X = X_{r,N,k} =  W_H  \times   A^{2k},$$ where $W_H$ is the base change to $H$.  Recall the curve $X_0(N)/\Q$, the coarse moduli space of generalized elliptic curves with a cyclic subgroup of order $N$.  $X_0(N)$ is the quotient of $X(N)$ by the action of the standard Borel subgroup $B \subset \GL_2\left(\Z/N\Z\right)/\{\pm1\}$.    

% CONSTRUCTING THE CYCLES

The computations of the Fourier coefficients in the previous section suggest that we consider the following \textit{generalized Heegner cycle} on $X$.  Fix a Heegner point $y \in Y_0(N)(H)$ represented by a cyclic $N$-isogeny $A \to A'$, for some elliptic curve $A'/H$ with CM by $\O_K$.  Such an isogeny exists since each prime dividing $N$ splits in $K$.   Also let $\tilde y$ be a point of $Y(N)_H$ over $y$.  The fiber $E_{\tilde y}$ of the universal elliptic curve $\E \to Y(N)$ above the point $\tilde y$ is isomorphic to $A_F$, where $F \supset H$ is the residue field of $\tilde y$.  Let 
$$\Delta \subset E_{\tilde y} \times A_F \cong A_F \times A_F$$ 
be the diagonal, and we write $\Gamma_{\sqrt D} \subset E_{\tilde y} \times E_{\tilde y}$ for the graph of $\sqrt D \in \End(E_{\tilde y}) \cong \O_K$.  We define 
$$Y = \Gamma_{\sqrt D}^{r-1-k} \times  \Delta^{2k}  \subset X_{\tilde y} \cong A_F^{2r-2} \times A_F^{2k},$$ so that $Y \in \CH^{k+r}(X_F)$.  Here $X_{\tilde y}$ is the fiber of the natural projection $X \to X(N)$ above the point $\tilde y$.

Since $X$ is not defined over $\Q$, we need to find cycles to play the role of $\Gal(H/K)$-conjugates of $Y$.   For each $\sigma \in \Gal(H/K)$ we have a corresponding ideal class $\AA$.  For each integral ideal $\a \in \AA$, define the cycle $Y^\a$ as follows:
\begin{equation*}Y^\a = \Gamma_{\sqrt{D}}^{r-k-1} \times \left(\Gamma^t_{\phi_\a}\right)^{2k}  \subset \left(A_F^\a \times A_F^\a \right)^{r-k-1} \times \left(A_F^\a \times A_F\right)^{2k}  = X_{\tilde y^\sigma} \subset X_F.\end{equation*}
Here, $\Gamma^t_{\phi_\a}$ is the transpose of $\Gamma_{\phi_\a}$, the graph of $\phi_\a: A \to A^\a$.  The cycle $Y^\a \in \CH^{k+r}(X_F)$ is \textit{not} independent of the class of $\a$ in $\Pic(\O_K)$, but certain expressions involving $Y^\a$ \textit{will} be independent of the class of $\a$.  Note that $Y = Y^{\O_K}$.  

% DEFINING PROJECTORS

\subsection{Projectors}

Next we define a projector $\e \in \Corr^0(X, X)_K$ so that $\e Y^\a$ lies in the group $\CH^{r+k}(X_F)_{0,K}$ of homologically trivial $(r+k)$-cycles with coefficients in $K$.  Here, $\Corr^0(X,X)_K$ is the ring of degree 0 correspondences with coefficients in $K$.  For definitions and conventions concerning motives, correspondences, and projectors see \cite[\S2]{BDP3}.  

The projector is defined as $\epsilon = \epsilon_X = \epsilon_W\epsilon_\ell$.  Here,  $\epsilon_W$ is the pullback to $X$ of the Deligne-Scholl projector $\tilde\e_W \in \Q[\Aut(W)]$ which projects onto the subspace of $H^{2r-1}(W)$ coming from modular forms of weight $2r$ (see e.g.\ \cite[\S2]{BDP1}).  The second factor $\epsilon_\ell$ is the pullback to $X$ of the projector 

$$\epsilon_\ell = \left(\frac{\sqrt{D} + [\sqrt{D}]}{2\sqrt{D}}\right)^{\otimes \ell}  \circ \left(\frac{1 - [-1]}{2}\right)^{\otimes \ell} \in \Corr^0(A^\ell, A^\ell)_K,$$
denoted by the same symbol. 
On the $p$-adic realization of the motive $M_{A^\ell,K}$, $\epsilon_\ell$ projects onto the 1-dimensional $\Q_p$-subspace $V_{\p}^{\otimes 2k}A$ of 
$$\Sym^{2k}H_\et^1(\bar A,\Q_p)(k) \subset H_\et^{2k}(\bar A^{2k},\Q_p(k)).$$ Here, $\p$ is the prime of $K$ above $p$ which is determined by our chosen embedding $K \into \bar \Q_p$ and $V_{\p}A = \mathop{\varprojlim}_{n} A[\p^n] \otimes \Q_p$ is the $\p$-adic Tate module of $A$.  See Section \ref{ordsec} and \cite[\S1.2]{BDP3} for more details. 

We also make use of the projectors 
$$\bar\e_\ell = \left(\frac{\sqrt{D} - [\sqrt{D}]}{2\sqrt{D}}\right)^{\otimes \ell }\circ \left(\frac{1 - [-1]}{2}\right)^{\otimes \ell} \in \Corr^0(A^\ell, A^\ell)_K$$ and $\kappa_\ell = \e_\ell + \bar \e_\ell$.  The first projects onto $V_{\bar \p}A^{\otimes \ell}$ and the latter onto $V_\p A^{\otimes \ell} \oplus V_{\bar \p}A^{\otimes \ell}$.  Set $\bar \e  = \e_W \bar\e_\ell$ and $\e'= \e_W \kappa_\ell$.

\begin{remark}\label{descend}
For this remark, suppose that $\chi = \psi^\ell$, where $\psi$ is the $(1,0)$-Hecke character attached to $A$ by the theory of complex multiplication.  This means the $G_H$-action on $H^1(\bar A, \Q_p)(1)$ is given by the $(K \otimes \Q_p)^\times$-values Galois character $\psi_H = \psi \circ \Nm_{H/K}$.  If we write $\chi_H = \psi_H^\ell$, then the motive $M(\chi_H)$ over $H$ (with coefficients in $K$) from Section \ref{apps} is defined by the triple $(A^{2k}, \kappa_\ell, k)$.  

We explain how to descend this to a motive over $K$ with coefficients in $\Q(\chi)$ (this a modification of a construction from an earlier draft of \cite{BDP3}).  Let $e_K$ and $\bar e_K$ be the idempotents in $K \otimes K$ corresponding to the first and second projections $K \otimes K \cong K \times K \to K$.  For each $\sigma \in \Gal(H/K)$ choose an ideal $\a \subset \O_K$ corresponding to $\sigma$ under the Artin map and define 
\[ \Gamma(\sigma) := e_K \cdot (\phi_\a \times \cdots \times \phi_\a) \otimes \chi(\a)^{-1} \in \Hom\left(A^\ell, (A^\ell)^\sigma\right) \otimes_\Q \Q(\chi)\]        
\[ \bar \Gamma(\sigma) := \bar e_K \cdot (\phi_\a \times \cdots \times \phi_\a) \otimes \bar\chi(\a)^{-1} \in \Hom\left(A^\ell, (A^\ell)^\sigma\right) \otimes_\Q \Q(\chi).\]        
Since $\chi(\gamma \a) = \gamma^\ell \chi(\a)$ and $\phi_{\gamma \a} = \gamma \phi_\a$, these definitions are independent of the choice of $\a$.  Moreover,
\[\Gamma(\sigma \tau) = \Gamma(\sigma)^\tau \circ \Gamma(\tau)\] 
and similarly for $\bar\Gamma$.  We set
\[\Lambda(\sigma) = \kappa_\ell \circ (\Gamma(\sigma) +  \bar \Gamma(\sigma)) \circ \kappa_\ell^\sigma \in \Corr^0(A^\ell, (A^\sigma)^\ell)_\Q \otimes_\Q \Q(\chi).\]
Then the collection $\{\Lambda(\sigma)\}_\sigma$ gives descent data for the motive $M(\chi_H) \otimes \Q(\chi)$, hence determines a motive $M(\chi)$ over $K$ with coefficients in $\Q(\chi)$.  The $p$-adic realization of $M(\chi)$ is $\chi \oplus \bar\chi$ where $\chi$ is now thought of as a $\Q(\chi)\otimes \Q_p$-valued character of $G_K$.  
\end{remark}

Define the following sheaf on $X(N)$: $$\L = j_*\Sym^w(R^1f_*\Q_p) \otimes \kappa_\ell H^{2k}_\et(\bar A^{2k}, \Q_p(k)),$$ where $w = 2r- 2$, and $j: Y(N) \hookrightarrow X(N)$ and $f: \E \to Y(N)$ are the natural maps.  

From now on we drop the subscript `$\et$' from all cohomology groups and set $\bar Z = Z \times_{\Spec k}  \Spec \bar k$ for any variety defined over a field $k$.  We also use the notation $V_K = V \otimes K$, for any abelian group $V$. 

\begin{theorem}
There is a canonical isomorphism
$$H^1(\bar X(N), \L) \xrightarrow{\sim} \epsilon' H^{2r+2k-1}(\bar X,\Q_p)= \e' H^*(\bar X, \Q_p).$$
\end{theorem}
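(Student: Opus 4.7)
The plan is to combine the Künneth formula on $X = W_H \times A^{2k}$ with the Deligne--Scholl identification of the relevant piece of $H^{2r-1}(\bar W,\Q_p)$ as parabolic cohomology of the modular curve. Write $\pi_W\colon X \to W_H$ and $\pi_A\colon X \to A^{2k}$ for the projections, and recall that $\e' = \e_W \kappa_\ell$ acts on the cohomology of $X$ through pullback, so it factors as the external tensor product of $\e_W$ acting on $H^*(\bar W,\Q_p)$ and $\kappa_\ell$ acting on $H^*(\bar A^{2k},\Q_p(k))$.

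First I would analyze the action of $\kappa_\ell$ on $H^*(\bar A^{2k},\Q_p)$. Each factor $\tfrac{1}{2}(1-[-1])$ picks out the part of $H^*(\bar A,\Q_p)$ on which $[-1]^*$ acts by $-1$; since $[-1]^*$ acts by $(-1)^j$ on $H^j(\bar A,\Q_p)$, this kills the even-degree cohomology of each factor. Taking the $\ell$-fold tensor, only the K\"unneth summand in which every factor contributes $H^1$ survives, and this summand sits in total degree $\ell = 2k$. Hence $\kappa_\ell H^*(\bar A^{2k},\Q_p(k)) = \kappa_\ell H^{2k}(\bar A^{2k},\Q_p(k))$ is concentrated in a single degree, and is moreover a trivial $G_H$-representation on the coefficients (it is essentially the $\chi \oplus \bar\chi$ eigenpiece coming from the CM action).

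Next I would invoke the Deligne--Scholl theorem (as recalled in \cite{Sch} and \cite[\S2]{BDP1}), which asserts that $\e_W$ kills $H^i(\bar W,\Q_p)$ for $i \neq 2r-1$ and provides a canonical isomorphism
\[
\e_W H^{2r-1}(\bar W,\Q_p) \;\xrightarrow{\sim}\; H^1\!\bigl(\bar X(N),\, j_* \Sym^w(R^1 f_* \Q_p)\bigr).
\]
This uses the Leray spectral sequence for $W \to X(N)$ together with the fact that, after applying $\e_W$, the only nonzero higher direct image of the structure sheaf is $\Sym^w(R^1 f_* \Q_p)$ in degree $w = 2r-2$, and the $E_2$-page collapses to give the displayed identification.

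Combining these two inputs via the K\"unneth formula on $\bar X = \bar W \times \bar A^{2k}$ yields
\[
\e' H^*(\bar X,\Q_p) \;=\; \e_W H^{2r-1}(\bar W,\Q_p) \;\otimes\; \kappa_\ell H^{2k}(\bar A^{2k},\Q_p(k)),
\]
which is concentrated in degree $2r+2k-1$, proving the second equality in the statement. Since the second tensor factor is a constant sheaf on $\bar X(N)$, the projection formula (or simply pulling the constant coefficient module out of the $H^1$) identifies the right-hand side with $H^1(\bar X(N),\L)$, yielding the first isomorphism. The main obstacle here is purely bookkeeping: ensuring that the Deligne--Scholl projector and Leray collapse are compatible with tensoring by the constant CM piece; this is routine given \cite{Sch}, and no new ideas are required beyond those already in \cite[\S2]{BDP1}.
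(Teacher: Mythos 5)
Your argument is correct and is essentially the proof the paper delegates to \cite[II.2.4]{Nek} and \cite[Prop.\ 2.4]{BDP1}: the projector $\kappa_\ell$ concentrates $H^*(\bar A^{2k},\Q_p(k))$ in degree $2k$ on a two-dimensional constant piece, Deligne--Scholl concentrates $\e_W H^*(\bar W,\Q_p)$ in degree $2r-1$ and identifies it with $H^1$ of the modular curve with coefficients in $j_*\Sym^w(R^1f_*\Q_p)$, and K\"unneth glues these. One small caveat: the CM factor $\kappa_\ell H^{2k}(\bar A^{2k},\Q_p(k))$ is not a trivial $G_H$-representation (it is the $\psi_H^\ell\oplus\bar\psi_H^\ell$ piece), but it is geometrically constant over $\bar X(N)$, which is all that the K\"unneth/projection-formula step actually requires.
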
  

\begin{proof}
See \cite[II.2.4]{Nek} and \cite[Prop.\ 2.4]{BDP1}.
\end{proof}

\begin{corollary}\label{homtriv}
The cycles $\epsilon Y^\a$ and $\bar\e Y^\a$ are homologically trivial on $X_F$, i.e. they lie in the domain of the $p$-adic Abel-Jacobi map
$$\Phi: \CH^{r+k}(X_F)_{0,K} \to H^1(F,H^{2r+2k-1}(\bar X, \Q_p(r+k))).$$ 
\end{corollary}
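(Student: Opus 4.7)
The plan is to show that the cycle class of $\e Y^\a$ in $H^{2(r+k)}(\bar X_F, \Q_p(r+k))$ vanishes, which is exactly the condition of being homologically trivial. By the functoriality of the étale cycle class map with respect to algebraic correspondences, we have
$$\mathrm{cl}(\e Y^\a) = \e_* \mathrm{cl}(Y^\a) \in H^{2(r+k)}(\bar X_F, \Q_p(r+k))_K,$$
so it suffices to prove that the correspondence $\e$ annihilates $H^{2(r+k)}(\bar X, \Q_p)$.

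Next, I would invoke the theorem proved just before the corollary, which gives
$$\e' H^*(\bar X, \Q_p) = \e' H^{2r+2k-1}(\bar X, \Q_p),$$
in other words, the projector $\e' = \e_W \kappa_\ell$ kills every cohomological degree except the middle one. Since $\kappa_\ell = \e_\ell + \bar\e_\ell$ and the projectors $\e_\ell$, $\bar\e_\ell$ are orthogonal idempotents (by their definition in terms of the action of $\sqrt{D}$ on $A^\ell$), we have $\kappa_\ell \e_\ell = \e_\ell$ and $\kappa_\ell \bar\e_\ell = \bar\e_\ell$, hence $\e' \e = \e$ and $\e' \bar\e = \bar\e$ as correspondences. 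Consequently
$$\e H^{*}(\bar X, \Q_p) \subseteq \e' H^{*}(\bar X, \Q_p) = \e' H^{2r+2k-1}(\bar X, \Q_p),$$
and in particular $\e$ annihilates the even-degree group $H^{2(r+k)}(\bar X, \Q_p)$. The identical argument works with $\bar\e$ in place of $\e$.

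Combining the two steps, $\mathrm{cl}(\e Y^\a) = 0$ and $\mathrm{cl}(\bar\e Y^\a) = 0$, so both cycles lie in $\CH^{r+k}(X_F)_{0,K}$ and the $p$-adic Abel-Jacobi map $\Phi$ is defined on them. There is no genuine obstacle here: all the work has already been done in the preceding theorem, which identifies $\e' H^*(\bar X, \Q_p)$ with a single term $H^1(\bar X(N), \L)$ coming from the Leray spectral sequence for $X \to X(N)$ (the key input being the Deligne–Scholl projector $\e_W$ cutting out $H^{2r-1}$ of $W$ and $\e_\ell$, $\bar\e_\ell$ cutting out the relevant two-dimensional piece of $H^{2k}(A^{2k})$). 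Granted that input, the corollary reduces to the formal manipulation of idempotents above.
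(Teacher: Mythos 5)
Your argument is correct and is essentially the paper's own proof: both rest on the preceding theorem (which shows $\e'H^{2r+2k}(\bar X,\Q_p)=0$), the idempotent identities $\e\e'=\e$ and $\bar\e\e'=\bar\e$, and the compatibility of the cycle class map with algebraic correspondences. The only cosmetic difference is that you phrase the conclusion as ``$\e$ annihilates $H^{2(r+k)}$'' while the paper first notes that $\e'Y^\a$ is homologically trivial and then applies the correspondence $\e$; these are the same argument.
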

\begin{proof}
By the theorem, $\e' Y^\a$ is in the kernel of the map 
$$\CH^{r+k}(X_F)_K \to H^{2r+2k}(\bar X_F, \Q_p(r+k)),$$ i.e. it is homologically trivial.  Moreover, $\e = \e \e'$ and $\bar \e = \bar \e \e'$.  Since Abel-Jacobi maps commute with algebraic correspondences, it follows that $\e Y^\a$ and $\bar\e Y^\a$ are homologically trivial as well.    
\end{proof}

% Bloch-Kato Selmer groups
\subsection{Bloch-Kato Selmer groups}
Let $F$ be a finite extension of $\Q_\ell$ ($\ell$ a prime, possibly equal to $p$) and let $V$ be a continuous $p$-adic representation of $\Gal(\bar F/F)$.  Then there is a Bloch-Kato subgroup $H^1_f(F, V) \subset H^1(F,V)$, defined for example in \cite{BK} or \cite[1.12 and 2.1.4]{Nekhts}.  If $\ell \neq p$ (resp.\ $\ell = p$) and $V$ is unramified (resp.\ crystalline), then $H^1_f(F,V) = \Ext^1(\Q_p, V)$ in the category of unramified (resp.\ crystalline) representations of $\Gal(\bar F/F)$.  If instead $F$ is a number field, then $H^1_f(F,V)$ is defined to be the set of classes in $H^1(F,V)$ which restrict to classes in $H^1_f(F_v,V)$ for all finite primes $v$ of $F$.    

The Bloch-Kato Selmer group plays an important role in the general theory of $p$-adic heights of homologically trivial algebraic cycles on a smooth projective variety $X/F$ defined over a number field $F$.  Indeed, Nekov\'{a}\v{r}'s $p$-adic height pairing is only defined on $H^1_f(F, V)$, and not on the Chow group $\CH^j(X)_0$ of homologically trivial cycles of codimension $j$.  Here $V = H^{2j-1}(\bar X, \Q_p(j))$.  This is compatible with the Bloch-Kato conjecture \cite{BK}, which asserts (among other, much deeper statements) that the image of the Abel-Jacobi map 
\[\Phi: \CH^j(X)_0 \to H^1(F,V)\]
is contained in $H^1_f(F,V)$.  The next couple of results follow \cite[II.2]{Nek} and verify this aspect of the Bloch-Kato conjecture in our situation, allowing us to consider $p$-adic heights of generalized Heegner cycles.  We also give a more concrete description of the Abel-Jacobi images of generalized Heegner cycles in terms of local systems on the modular curve.    

Denote by $b(Y^\a)$ the cohomology class of $\epsilon(\bar Y^\a)$ in the fiber $\bar X_{\tilde y}$, so that $b(Y^\a)$ lies in 
$$\e' H^{2r+2k -2}\left(\bar X_{\tilde y^\sigma}, \Q_p(r+k -1)\right)^{G(\bar F/F)} \xrightarrow{\sim} H^0\left(\overline{\tilde y^\sigma}, \B\right)^{G(\bar F/F)},$$ where 
$$\B = \Sym^{2r-2}(R^1f_*\Q_p)(r-1)  \otimes \kappa_\ell H^{2k}\left(\bar A^{2k}, \Q_p(k)\right),$$
the sheaf on $Y(N)$.  The isomorphism above follows from proper base change, Lemma 1.8 of \cite{BDP1}, and the Kunneth formula.  Similarly, let $\bar b(Y^\a)$ be the class of $\bar \e \bar Y^\a$.  For the next proposition, let $j: Y(N) \to X(N)$ be the inclusion.  
  
\begin{theorem}\label{AJ}
Set $V = H^{2r+2k-1}(\bar X, \Q_p(r+k))$. 
\begin{enumerate}
\item $V$ is a crystalline representation of $\Gal(\bar H_v/H_v)$ for all $v | p$.
\item The Abel-Jacobi images $z^\a = \Phi(\epsilon Y^\a), \bar z^\a = \Phi(\bar\e Y^\a) \in H^1(F, V)$ lie in the subspace $H^1_f\left(F, V\right).$ 
\item The element $z^\a$, thought of as an extension of $p$-adic Galois representations, can be obtained as the pull back of 
$$ 0 \to H^1(\overline{X(N)}, j_*\B)(1) \to H^1(\overline{X(N)} - \overline{\tilde y^\sigma}, j_*\B)(1) \to H^0(\overline{\tilde y^\sigma}, \B) \to 0$$
by the map $\Q_p \to H^0\left(\overline{\tilde y^\sigma}, \B\right)$ sending 1 to $b(Y^\a)$, and similarly for $\bar z^\a$.  In particular, $z^\a$ and $\bar z^\a$ only depend on $b(Y^\a)$ and $\bar b(Y^\a)$ respectively.  

\end{enumerate}
\end{theorem}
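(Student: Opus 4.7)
The plan is to follow the strategy of \cite[II.2]{Nek} closely, exploiting the fact that $X$ fibers over $X(N)_H$ and that the projector $\e'$ essentially cuts out the first Leray cohomology of the local system $\B$. For part (1), I would observe that both factors have good reduction at every $v \mid p$: $A$ by hypothesis, and $W = W_{2r-2}$ because $p \nmid N$ and Scholl's construction supplies a smooth proper model over $\Z[1/N]$. Hence $X = W_H \times A^{2k}$ has good reduction at every $v \mid p$, and Faltings' proof of $C_{\cris}$ then shows that $V = H^{2r+2k-1}(\bar X, \Q_p(r+k))$ is a crystalline $\Gal(\bar H_v / H_v)$-representation.

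For part (3), I would apply the Leray spectral sequence to the projection $X \to X(N)_H$ and combine it with the previous theorem to identify $\e' H^{2r+2k-1}(\bar X, \Q_p)$ with $H^1(\overline{X(N)}, j_*\B)$. The cycle $\e Y^\a$ is supported in the single fiber $X_{\tilde y^\sigma}$, so applying $\e'$ to the Gysin/localization long exact sequence for this fiber and twisting by $(1)$ produces exactly the three-term exact sequence displayed in the statement. Identifying the $p$-adic Abel-Jacobi class with the extension class coming from such a localization sequence (as in \cite[II.2.5]{Nek} and \cite[Prop.~2.4]{BDP1}) realizes $z^\a$ as the pullback of this extension along the map $\Q_p \to H^0(\overline{\tilde y^\sigma}, \B)$ sending $1 \mapsto b(Y^\a)$. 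In particular $z^\a$ depends only on $b(Y^\a)$, and the identical argument handles $\bar z^\a$.

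For part (2), I would argue place by place using the description in (3). At primes $v$ of good reduction for $X$ (in particular at all $v \nmid pN|D|$), the extension is unramified, hence already lies in $H^1_f$. At primes $v \mid N|D|$ with $v \nmid p$, a weight-monodromy argument along the lines of \cite[II.2.7]{Nek} shows that the inertia invariants of the middle term exhaust the unramified subspace, so again the extension class falls in $H^1_f$.

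The main obstacle is crystallinity at $v \mid p$. The middle term $H^1(\overline{X(N)} - \overline{\tilde y^\sigma}, j_*\B)(1)$ is cohomology of an \emph{open} variety, so $C_{\cris}$ does not apply to it directly. Rather than fight this, I would bypass (3) at $v \mid p$ and invoke the general principle that for a smooth proper variety with good reduction at $v$, the $p$-adic Abel-Jacobi image of any homologically trivial cycle lies in $H^1_f(F_v, V)$; this follows from the compatibility of the \'etale Abel-Jacobi map with its syntomic counterpart and with the Bloch-Kato exponential, as in \cite[II.2]{Nek}. Part (1) supplies the required good reduction of $X$, so $z^\a$ and $\bar z^\a$ are crystalline at $v \mid p$, completing the argument.
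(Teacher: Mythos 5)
Your proposal is correct and follows essentially the same route as the paper: parts (1) and (3) are argued identically (Faltings' theorem plus good reduction of $X$ at primes above $p$, and the localization sequence for $j_*\B$ as in \cite[II.2.4]{Nek}). For part (2) the paper simply cites Nekov\'a\v{r}'s general theorem \cite[Theorem 3.1]{Nek2}, whose hypothesis---purity of the monodromy filtration for $X$, known for $W$ and $A^\ell$ and hence for their product---together with the crystalline comparison at $v \mid p$ is exactly the pair of inputs you unpack place by place, so the two arguments coincide in substance.
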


\begin{proof}
(1) follows from Faltings' theorem \cite{Falt} and the fact that $X$ has good reduction at primes above $p$.  (2) is a general result due to Nekov\'{a}\v{r}, see \cite[Theorem 3.1]{Nek2}.  To apply the result one needs to know the purity conjecture for the monodromy filtration for $X$.  But this is known for $W$ and $A^\ell$, so it holds for $X$ as well \cite[3.2]{Nek2}.  We note that (2) is ultimately a local statement at each place $v$ of $H$, and for $v | p$,  the approach taken in the proof of Theorem \ref{crysmixed} below gives an alternate proof of this local statement.   Statement (3) can be proved exactly as in \cite[II.2.4]{Nek}.
\end{proof}

\begin{definition}
If $F/H$ is a field extension, then a \textit{Tate vector} is an element in $H^0(\bar y_0, \B)^{\Gal(\bar F/F)}$ for some $y_0 \in Y(N)(F)$.  A \textit{Tate cycle} is a formal finite sum of Tate vectors over $F$.  The group of Tate cycles is denoted $Z(Y(N), F)$.   
\end{definition}

Let $\pi: X(N) \to X_0(N) = X(N)/B$ be the quotient map, and as in \cite{Nek}, define $\e_B = (\#B)^{-1}\sum_{g \in B} g$, which acts on $X(N)$ and its cohomology.  Set $\Aa = (\pi_* \B)^B$, $a(Y^\a) = \e_Bb(Y^\a)$, and $\bar a(Y^\a) = \e_B \bar b(Y^\a)$.  We define the group $Z(Y_0(N), F)$ of Tate cycles on $Y_0(N)$ exactly as for $Y(N)$, but with $\B$ replaced by $\Aa$.  Let $j_0: Y_0(N) \to X_0(N)$ be the inclusion.  Note that $a(Y^\a)$ is an element of $Z(Y(N),H)$, not just $Z(Y(N),F)$.        

\begin{proposition}
The element $\Phi(\e_B\e Y^\a) \in H^1\left(H, H^1\left(\overline{X_0(N)}, (j_0)_*\Aa\right)\left(1\right)\right)$, thought of as an extension of $p$-adic Galois representations, can be obtained as the pull back of 
$$ 0 \to H^1\left(\overline{X_0(N)}, j_*\Aa\right)(1) \to H^1\left(\overline{X_0(N)} - \bar y^\sigma, j_*\Aa\right)(1) \to H^0(\bar y^\sigma, \Aa) \to 0$$
by the map $\Q_p \to H^0\left(\overline{y^\sigma}, \Aa\right)$ sending 1 to $a(Y^\a)$.  In particular, $\Phi(\e_B\epsilon Y^\a)$ only depends on $a(Y^\a)$.  Similarly, $\Phi(\e_B \bar \e Y^\a)$ depends only on $\bar a(Y^\a)$.  
\end{proposition}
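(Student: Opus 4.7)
The plan is to push the description of the extension class in Theorem~\ref{AJ}(3) through the quotient $\pi\colon X(N) \to X_0(N) = X(N)/B$. Since $\e_B = (\#B)^{-1} \sum_{g \in B} g$ is a correspondence on $X$ (pulled back from $X(N)$) and the $p$-adic Abel-Jacobi map is functorial with respect to correspondences, I would first observe that $\Phi(\e_B \e Y^\a) = \e_B \Phi(\e Y^\a) = \e_B z^\a$. Because $B$ is a finite group and we work with $\Q_p$-coefficients, the averaging operator $\e_B$ is exact, so applying it to the short exact sequence of Theorem~\ref{AJ}(3) produces a new short exact sequence, and the pullback of this new sequence along $1 \mapsto \e_B b(Y^\a) = a(Y^\a)$ represents $\Phi(\e_B \e Y^\a)$.

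The next step is to identify the $\e_B$-invariant pieces with cohomology on $X_0(N)$. Writing $\pi_Y\colon Y(N) \to Y_0(N)$ for the restriction of $\pi$, the finiteness of $\pi$ gives $R^i \pi_* = 0$ for $i > 0$ and $\pi_* j_* \B = (j_0)_* (\pi_Y)_* \B$. The Leray spectral sequence degenerates and produces $H^i(\overline{X(N)}, j_*\B) \cong H^i(\overline{X_0(N)}, \pi_* j_* \B)$, and taking $B$-invariants together with the definition $\Aa = (\pi_*\B)^B$ yields
\[
\e_B H^i\bigl(\overline{X(N)}, j_*\B\bigr) \;\cong\; H^i\bigl(\overline{X_0(N)}, (j_0)_*\Aa\bigr).
\]
The analogous identification holds after removing $\overline{\tilde y^\sigma}$, and for the fiber term $\e_B H^0(\overline{\tilde y^\sigma}, \B) \cong H^0(\bar y^\sigma, \Aa)$; transitivity of the $B$-action on the $F$-points of $\pi^{-1}(y)$ descends the extension class from $F$ down to $H$, consistent with the statement that $a(Y^\a)\in Z(Y_0(N),H)$. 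The statement for $\Phi(\e_B \bar\e Y^\a)$ then follows by the same argument, since $\e_B$ commutes with $\bar\e$ (they act on disjoint factors of $X = W_H \times A^{2k}$) and $\e_B \bar b(Y^\a) = \bar a(Y^\a)$.

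The hard part will be the sheaf-level identification $(\pi_*\B)^B \cong \Aa$ on all of $X_0(N)$, including the ramified locus of $\pi$ (elliptic points and cusps). Away from these points $\pi$ is finite étale and the identification is immediate; at elliptic points the extra stabilizers are absorbed by the averaging $\e_B$, and at cusps the Deligne-Scholl projector $\e_W$ built into the definition of $\B$ annihilates the cuspidal contribution, as in the analogous argument in \cite[II.2]{Nek}. So the principal obstacle is careful bookkeeping at the special points rather than a substantively new technique.
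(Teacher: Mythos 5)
Your overall strategy is the right one and is essentially what the paper intends: the paper gives no proof of this proposition, deferring to \cite[II.2.6]{Nek}, and the argument there is exactly this descent of the Gysin-sequence description of Theorem \ref{AJ}(3) through the quotient $\pi: X(N) \to X_0(N)$, using that $\e_B$ is an idempotent in characteristic zero, that $\pi$ is finite so that $\e_B H^i(\overline{X(N)}, j_*\B) \cong H^i(\overline{X_0(N)}, (j_0)_*\Aa)$, and that the elliptic points and cusps are harmless after averaging and applying $\e_W$.

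One step, however, does not parse as written and needs a (standard) repair: $\e_B$ does \emph{not} act on the exact sequence of Theorem \ref{AJ}(3), because the open set $\overline{X(N)} - \overline{\tilde y^\sigma}$ is not $B$-stable --- the $B$-orbit of $\tilde y^\sigma$ contains the other points of $\pi^{-1}(\bar y^\sigma)$, which have not been removed. For the same reason the ``analogous identification after removing $\overline{\tilde y^\sigma}$'' is not the cohomology of $\overline{X_0(N)} - \bar y^\sigma$. The fix is to introduce the intermediate Gysin sequence for $\overline{X(N)} - \pi^{-1}(\bar y^\sigma)$, whose boundary term is $H^0(\pi^{-1}(\bar y^\sigma), \B) = \bigoplus_{g} H^0(g\,\overline{\tilde y^\sigma}, \B)$; the pullback of the sequence of Theorem \ref{AJ}(3) along $1 \mapsto b(Y^\a)$ coincides with the pullback of this larger sequence along the summand inclusion of $b(Y^\a)$, and only then is the open set $B$-stable so that $\e_B$ applies and produces the sequence over $X_0(N)$ with boundary value $a(Y^\a)$. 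Relatedly, the passage from $F$ to $H$ is a Galois descent, not a consequence of $B$-transitivity alone: one uses that $a(Y^\a)$ is $\Gal(\bar H/H)$-invariant (so the pullback extension is defined over $H$) together with the injectivity of the restriction $H^1(H, -) \to H^1(F, -)$ on $\Q_p$-vector spaces to identify it with $\Phi(\e_B\e Y^\a)$. With these two adjustments your argument is complete.
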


In fact, for any field $F/H$ one can define a map $\Phi_T: Z(Y_0(N), F) \to H^1(F, H^1(\bar X_0(N), j_{0*}\Aa)(1))$, by pulling back the appropriate exact sequence as above.  We then have $\Phi(\e_B \e Y^\a) = \Phi_T(a(Y^\a))$ and $\Phi(\e_B \bar \e Y^\a) = \Phi_T(\bar a Y^\a)$.  For more detail, see \cite[II.2.6]{Nek}.    

% Hecke Operators

\subsection{Hecke operators}
The Hecke operators on $W_{2r-2}$ from \cite{Nek} pull back to give Hecke operators $T_m$ on $X$.  The $T_m$ are correspondences on $X$; they act on Chow groups and cohomology groups and commute with Abel-Jacobi maps.  To describe the action of the Hecke algebra $\T$ on Tate vectors, we need to say what $T_m$ does to an element of $H^0(\bar y_0, \Aa)^{G(\bar F/F)}$ for an arbitrary point $y_0 \in X_0(N)(F)$, $F$ an extension of $H$.  Such an element is represented by a triple $(E,C,b)$ where $E$ is an elliptic curve, $C$ is a subgroup of order $N$, and   
$$b \in \Sym^w(H^1(\bar E, \Q_p))(r-1) \otimes \kappa_\ell \Sym^{2k} (H^1(\bar A, \Q_p))(k).$$
As the Hecke operators are defined via base change from those on $W_{2r-2}$, we have:  
$$T_m(E,C,b) = \sum_{\substack{\lambda: E \to E' \\ \deg(\lambda) = m}} (E', \lambda(C), (\lambda^w \times \id)_*(b)),$$ where we are using the map $\lambda^w \times \id : E^w \times A^\ell \to E'^w \times A^\ell$.  

Now set $V_{r,A,\ell} = \e_B\e' V = H^1(\overline{X_0(N)}, (j_0)_*\Aa)(1),$ a subrepresentation of $V$.   Then $z^\a := \Phi(\e_B\epsilon Y^\a)$ lands in the Bloch-Kato subspace $H^1_f(H,V_{r,A,\ell}) \subset H^1(H,V_{r,A,\ell})$, by Proposition \ref{AJ}.  For any newform $f \in S_{2r}(\Gamma_0(N))$, we let $V_{f,A,\ell}$ be the $f$-isotypic component of $V_{r,A, \ell}$ with respect to the action of $\T$.  
%$V_{f,A,\ell}$ is a $(\Q_p(f) \otimes K)$-module.    
Consider the $f$-isotypic Abel-Jacobi map $$\Phi_f: \CH^{r+k}(X)_{0,K} \to H^1_f(H,V_{f,A,\ell}),$$ and set $z^\a_f = \Phi_f(\e_B \e Y^\a)$ and $\bar z_f^\a = \Phi_f(\e_B\bar\e Y^\a)$.  

As is shown in Section \ref{ordsec}, the $p$-adic representation $V_{f,A,\ell}$ is ordinary and satisfies $V_{f,A,\ell} \cong V_{f,A,\ell}^*(1)$.  The results of \cite{Nekhts} therefore give a symmetric pairing
$$\langle \, , \rangle_{\ell_K}: H_f^1(H, V_{f,A,\ell}) \times H_f^1(H,V_{f,A,\ell}) \to \Q_p(f),$$
depending on a choice of logarithm $\ell_K : \A_K^\times /K^\times \to \Q_p$ and the canonical splitting of the local Hodge filtrations at places $v$ of $H$ above $p$.  We will sometimes omit the dependence on $\ell_K$ in the notation for the heights if a choice has been fixed.  If $a,b \in Z(Y_0(N),F)$ are two Tate cycles, then we will write $\left\langle a,b \right\rangle_{\ell_K} $ for $\left\langle \Phi_T(a), \Phi_T(b)\right\rangle_{\ell_K}$.      

%INTERSECTION THEORY
\subsection{Intersection theory}\label{intersect}

Here we collect some facts about generalized Heegner cycles and their corresponding cohomology classes.  We first recall the intersection theory on products of elliptic curves; see \cite[II.3]{Nek} for proofs.    

Let $E, E', E''$ be elliptic curves over an algbraically closed field $k$ of characteristic not $p$, and set 
$$H^i(Y) = H_\et^i(Y,\Q_p) = \left(\lim_n H^i_\et (Y, \Z/p^n\Z)\right) \otimes \Q_p$$ for any variety $Y/k$.  A pair $(\alpha, \beta)$ of isogenies $\alpha\in \Hom(E'', E)$ and $\beta \in \Hom(E'', E')$, determines a cycle $$\Gamma_{\alpha,\beta}  = (\alpha,\beta)_*(1) \in \CH^1(E \times E'),$$
where $(\alpha,\beta)_* : \CH^0(E'') \to \CH^1(E \times E')$ is the push forward.          
The image of $\Gamma_{\alpha,\beta}$ under the cycle class map $\CH^1(E \times E') \to H^2(E \times E')(1)$ will be denoted by $[\Gamma_{\alpha,\beta}]$.  Also let $X_{\alpha,\beta}$ be the projection of $[\Gamma_{\alpha,\beta}]$ to $H^1(E) \otimes H^1(E')(1)$, i.e.
$$X_{\alpha,\beta} =  [\Gamma_{\alpha,\beta}] - \deg(\alpha)h - \deg(\beta)v,$$
where $h$ is the horizontal class $[\Gamma_{1,0}]$ and $v$ is the vertical class $[\Gamma_{0,1}]$.  If $\alpha \in \Hom(E, E')$, we write $\Gamma_\alpha$ and $X_{\alpha}$ for $\Gamma_{1,\alpha}$ and $X_{1,\alpha}$, respectively.  If $\beta \in \Hom(E' ,E)$ we write $\Gamma_\beta^t$ and $X_\beta^t$ for $\Gamma_{\beta, 1}$ and $X_{\beta, 1}$, respectively.  Finally, let 
$$(\, ,\,): H^2(E \times E')(1) \times H^2(E \times E')(1) \to \Q_p,$$
 be the non-degenerate cup product pairing.  
\begin{proposition}\label{bilin}
With notation as above,
\begin{enumerate}
\item The map $$\Hom(E'', E) \times \Hom(E'', E') \to H^1(E) \otimes H^1(E')(1)$$ given by $(\alpha,\beta) \mapsto  X_{\alpha,\beta}$ is biadditive.  
\item The map $\Hom(E, E') \to H^1(E) \times H^1(E')(1)$ given by $\alpha \mapsto X_{\alpha}$ is an injective group homomorphism.  
\item If $E = E'$, then $X_{\alpha,\beta} = X_{\beta \hat \alpha}$ and $(X_\alpha, X_\beta) = -\Tr(\alpha\hat \beta)$ for all $\alpha, \beta \in \End(E)$.
\end{enumerate}
Here, $\Tr :\End(E) \to \Z$ is the map $\alpha \mapsto \alpha + \hat\alpha$.    
\end{proposition}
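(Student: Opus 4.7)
The plan is to reduce everything to concrete linear algebra on the $\ell$-adic (here $p$-adic) $H^1$'s of the elliptic curves, via the Künneth decomposition and the standard cycle–correspondence dictionary. The key identification is Poincaré duality on $E'$, which gives an isomorphism
\[
H^1(E)\otimes H^1(E')(1)\;\cong\;\Hom\bigl(H^1(E'),\,H^1(E)\bigr).
\]
Under this identification, I would show that $X_{\alpha,\beta}$ corresponds to the endomorphism $\hat\alpha^{*}\!\circ\beta^{*}:H^1(E')\to H^1(E)$, obtained by pulling back along $\beta:E''\to E'$ and pushing forward along $\alpha:E''\to E$ (using $\alpha_{*}=\hat\alpha^{*}$ for isogenies). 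This is the content of writing out the Künneth components of $(\alpha,\beta)_{*}(1)$ via the projection formula and the fact that pushforward along an isogeny on $H^2$ is multiplication by the degree on top-degree classes.

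Once this identification is in hand, part (1) is immediate: both $\alpha\mapsto\hat\alpha^{*}$ and $\beta\mapsto\beta^{*}$ are additive group homomorphisms $\Hom(E'',E)\to\Hom(H^1(E),H^1(E''))$ and $\Hom(E'',E')\to\Hom(H^1(E'),H^1(E''))$ respectively, so composition is biadditive. For part (2), specializing to $\alpha\in\Hom(E,E')$ with the first map being $\id_{E}$ shows that $X_{\alpha}$ corresponds to $\alpha^{*}:H^1(E')\to H^1(E)$. Since an isogeny $\alpha\ne 0$ induces a nonzero map on $H^1$ (the composition $\hat\alpha^{*}\alpha^{*}$ is multiplication by $\deg\alpha\ne 0$), injectivity follows. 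The fact that $\alpha\mapsto X_\alpha$ is a homomorphism is a special case of (1).

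For part (3) with $E=E'$, the first identity $X_{\alpha,\beta}=X_{\beta\hat\alpha}$ follows by comparing endomorphisms of $H^1(E)$: the left side is $\hat\alpha^{*}\beta^{*}=(\beta\hat\alpha)^{*}$, while the right side is the Künneth class associated to $\id$ and $\beta\hat\alpha$, which corresponds to the same endomorphism $(\beta\hat\alpha)^{*}$. For the intersection formula, I would compute $(X_{\alpha},X_{\beta})$ by writing the cup-product pairing on $H^1(E)\otimes H^1(E)(1)$, viewed via Künneth as a trace pairing on $\End(H^1(E))$. Specifically, this pairing sends $(f,g)\mapsto -\operatorname{tr}(fg)$ (the sign comes from the anticommutativity in degree $1$ when one swaps tensor factors on one side to identify both as endomorphisms). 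This reduces the claim to $\operatorname{tr}\bigl(\alpha^{*}\hat\beta^{*}\bigr)=\Tr(\alpha\hat\beta)$, which is the standard fact that the trace of an endomorphism acting on $H^1$ of an elliptic curve equals its reduced trace as an element of $\End(E)$, applied to the endomorphism $\alpha\hat\beta$.

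The main obstacle is bookkeeping of signs and Tate twists, particularly in isolating the minus sign in $(X_\alpha,X_\beta)=-\Tr(\alpha\hat\beta)$; this is most safely done by checking it in the single test case $\alpha=\beta=\id$, where $X_{\id}=-\Delta_{E}+h+v$ up to sign conventions and $(X_{\id},X_{\id})=-2=-\Tr(\id)$. Everything else is formal manipulation on Künneth components and standard properties of push/pullbacks along isogenies, following the model of \cite[II.3]{Nek}.
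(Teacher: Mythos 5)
Your argument is correct, but note that the paper gives no proof of this proposition at all --- it defers to \cite[II.3]{Nek} --- and the argument there (following \cite[III]{GZ}) takes a more geometric route than yours for the intersection formula in (3): one computes $([\Gamma_\alpha],[\Gamma_\beta])=\deg(\alpha-\beta)$ directly (the graphs meet transversally in $\ker(\alpha-\beta)$), expands $[\Gamma_\gamma]=X_\gamma+h+\deg(\gamma)v$ using $(h,v)=1$ and $(h,h)=(v,v)=0$, and invokes the quadratic-form identity $\deg(\alpha-\beta)=\deg\alpha+\deg\beta-\Tr(\alpha\hat\beta)$. Your route stays entirely in cohomology, identifying $H^1(E)\otimes H^1(E')(1)$ with $\Hom\left(H^1(E'),H^1(E)\right)$ and $X_{\alpha,\beta}$ with $\alpha_*\beta^*=\hat\alpha^*\beta^*$; this makes (1), (2) and the first identity of (3) transparent, at the cost of heavier sign bookkeeping in the trace formula. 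One caution on that last step: under a single fixed identification the cup product is \emph{not} $(f,g)\mapsto-\operatorname{tr}(fg)$. Pairing first tensor factors with first and second with second forces you, when rewriting the pairing as a trace of a composite, to replace one of the two endomorphisms by its adjoint with respect to the Weil pairing, and it is precisely this adjoint (the Rosati involution) that turns $\beta^*$ into $\hat\beta^*$ and produces $\Tr(\alpha\hat\beta)$ rather than $\Tr(\alpha\beta)$. Your proposed test case $\alpha=\beta=\id$ cannot detect this distinction, since both candidates evaluate to $-2$ there; a case such as $\alpha=\beta=\sqrt D$, where $-\Tr(\alpha\hat\beta)=2D$ while $-\Tr(\alpha\beta)=-2D$ (compare equation (\ref{eigen}) and the computation of $(X_1,X_1)=-2$ in Section \ref{intersect}), does distinguish them. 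With the adjoint step made explicit, your argument is complete.
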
 
It is convenient to think of $H^1(E)$ as $V_pE^* = \Hom(V_pE, \Q_p)$, where $V_pE = T_pE \otimes \Q_p$ is the $p$-adic Tate module.  The Weil pairing $$V_pE \times V_pE \to \Q_p(1)$$ gives identifications $V_pE^*(1) \cong V_pE$ and $\bigwedge^2V_pE \cong \Q_p(1)$.  We then have the following diagram of isomorphisms 

\[\begin{CD}
\left(V_pE \otimes V_pE\right)(-1) @>>>  \left(\Sym^2V_pE \oplus \bigwedge^2 V_pE\right)(-1) @>>> \Sym^2V_pE (-1)\oplus \Q_p\\
@VVV    @.       @V\delta VV  \\
V_pE^* \otimes V_pE @>>> \End(V_pE) @>>>  \End_0(V_pE) \oplus \Q_p
\end{CD}\]

One checks that $\delta$ identifies $\Sym^2 V_pE(-1)$ with the space $\End_0(V_pE)$ of traceless endomorphisms of $V_pE$.  Now suppose that $E$ has complex multiplication by $\O_K$ and that $p = \p \bar\p$ splits in $K$.  Then 
$$V_pE = V_\p E \oplus V_{\bar\p}E,$$ where $V_\p = \varprojlim E[\p^n] \otimes \Q_p$ and $V_{\bar\p} = \varprojlim E[\bar\p^n] \otimes \Q_p$.  Let $x^*$ and $y^*$ be a basis for $V_\p E$ and $V_{\bar\p}E$ respectively, and let $x,y$ be the dual basis of $H^1(E)$ arising from the Weil pairing.  Since the Weil pairing is non-degenerate, we may assume that $e(x^*,y^*) = 1 \in \Q_p$. 

If $\alpha \in \End(E)$, then the class $X_\alpha \in H^1(E) \otimes H^1(E)(1)$, when thought of as an element of $\End(V_pE)$ via the isomorphisms above, is simply the map $V\alpha : V_pE \to V_pE$ induced on Tate modules.  Thus, $X_1 = \lambda(x \otimes y - y\otimes x)$ for some $\lambda \in \Q_p$.  Recall that one can compute the intersection pairing on $H^1(E)^{\otimes 2}$ in terms of the cup product on $H^1(E)$: 
$$(a \otimes b, c \otimes d) = -(a \cup c)(b\cup d).$$ Since $(X_1,X_1) = -2$, we conclude that $\lambda = 1$.  Next we claim that      
\begin{equation}\label{eigen}
X_{\sqrt{D}} = \pm \sqrt D(x \otimes y + y \otimes x).
\end{equation}              
To prove this, it suffices to show that $V\sqrt D$ acts on $V_\p$ by $\sqrt D$ and on $V_{\bar \p}$ by $-\sqrt D$.  Indeed, under the identifications
\[H^1(E) \otimes H^1(E)(1) \cong V_pE^* \otimes V_pE^*(1) \cong V_pE^* \otimes V_pE \cong \End(V_pE),\]
$x \otimes y$ corresponds to the element $f \in \End(V_p)$ such that $f(ax^* + by^*) = ax^*$ whereas $y \otimes x$ corresponds to $g \in \End(V_p)$ such that $g(ax^* + by^*) = -by^*$.

To understand how $V\sqrt D$ acts on $V_\p$, write $\p^n = p^n\Z + \frac{b + \sqrt D}{2}\Z$ for some $b,c \in \Z$ such that $b^2 - 4p^nc = D$, which is possible because $p$ splits in $K$.  For $P \in E[\p^n]$, one has $(b + \sqrt D)(P)= 0$, so $\sqrt D(P) = -bP$.  Since $b \equiv \pm \sqrt D$ (mod $\p^n)$, it follows upon taking a limit that $(V\sqrt D)(x^*) = \pm \sqrt D x^*$.  Since we can write $\bar\p^n = p^n\Z + \frac{b - \sqrt D}{2}\Z$, we also have $(V\sqrt D)(y^*) = \mp \sqrt Dy^*$, and this proves the claim.  Hence 
$$X_\gamma = \gamma(x \otimes y) - \bar\gamma (y \otimes x) \in H^1(E) \otimes H^1(E)(1),$$ 
for all $\gamma \in \O_K \hookrightarrow \End(E)$.  

Finally, note that the projector $\e_1 \in \Corr^0(E,E)_K$ defined earlier acts on $H^1(E)$ as projection onto $V_\p$.   

\begin{proposition}
Let $\a \subset \O_K$ be an ideal and $\AA \in \Pic(\O_K)$ its ideal class.  Then the elements
$$z_{f,\chi}^\AA = \chi(\a)^{-1}z_f^\a \hspace{5mm} \mbox{and}\hspace{5mm} z_{f,\bar\chi}^\AA = \bar\chi(\a)^{-1}\bar z_f^\a$$
in $H^1_f(H, V_{f,A,\ell})_{\bar\Q_p}$ depend only on $\AA \in \Pic(\O_K)$.  
\end{proposition}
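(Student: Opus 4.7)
The plan is to show $z_f^{\gamma\a} = \gamma^\ell z_f^\a$ whenever $\a' = (\gamma)\a$ with $\gamma \in K^\times$ and both $\a, \a'$ integral; combined with the infinity-type relation $\chi((\gamma)) = \gamma^\ell$ (so $\chi(\gamma\a) = \gamma^\ell \chi(\a)$), this yields $\chi(\gamma\a)^{-1} z_f^{\gamma\a} = \chi(\a)^{-1} z_f^\a$. The analogous computation with $\bar\e_\ell$ in place of $\e_\ell$ handles $\bar z_f^\a$: $\bar\e_\ell$ projects onto $V_{\bar\p}A^{\otimes 2k}$, on which multiplication by $\gamma$ acts as $\bar\gamma$ via the conjugate embedding, producing the scaling $\bar\gamma^\ell = \bar\chi((\gamma))$.

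The cycle-level input is the identity $\phi_{(\gamma)\a} = \gamma \circ \phi_\a$ (already recorded in Remark \ref{descend}), with $\gamma$ acting as an endomorphism of $A^\sigma$ via its $\O_K$-structure. This yields $\Gamma^t_{\phi_{\gamma\a}} = (\gamma \times 1)_* \Gamma^t_{\phi_\a}$ on $A^\sigma \times A$, and hence an equality of cycles $Y^{\gamma\a} = \mu_* Y^\a$ in $\CH^{r+k}(X_F)$, where $\mu \colon X_F \to X_F$ acts as multiplication by $\gamma$ on the $2k$ factors of $(A^\sigma)^{2r-2}$ appearing in the $\Gamma^t_{\phi_\a}$-blocks and trivially on all other factors. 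By Theorem \ref{AJ}(3) and the proposition that follows it, $z_f^\a$ depends only on the Tate vector $a(Y^\a) = \e_B b(Y^\a)$, where $b(Y^\a) \in \B_{\bar y^\sigma} \subset \Sym^{2r-2}H^1(\bar A^\sigma)(r-1) \otimes V_\p A^{\otimes 2k}$; since $\e_B$ is linear and $\Phi_f$ commutes with correspondences, it suffices to prove $b(Y^{\gamma\a}) = \gamma^\ell b(Y^\a)$.

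For this cohomological comparison, expand $[Y^\a]$ by the Kunneth formula. The Deligne-Scholl projector $\e_W$ kills every contribution except the one built from $X_{\sqrt D}^{\otimes (r-1-k)}$ and $(X^t_{\phi_\a})^{\otimes 2k}$, since $\Sym^{2r-2}H^1(\bar A^\sigma)$ requires an $H^1$-contribution from every factor of $(A^\sigma)^{2r-2}$. The projector $\e_\ell$ then picks out the $V_\p A$-component of $X^t_{\phi_\a} \in H^1(\bar A^\sigma) \otimes H^1(\bar A)(1) \cong \Hom(V_p A^\sigma, V_p A)$; by the $\O_K$-equivariance of $\phi_\a$, this component is a $K_\p$-linear map between the one-dimensional spaces $V_\p A^\sigma$ and $V_\p A$, hence multiplication by a scalar $c_\a \in K_\p \subset \bar\Q_p$. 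Since $\gamma$ acts on $V_\p A^\sigma$ as $\gamma \in \bar\Q_p$ under the fixed embedding $K \hookrightarrow \bar\Q_p$, the replacement $\phi_\a \mapsto \gamma\phi_\a$ produces $c_{\gamma\a} = \gamma c_\a$ (up to conventions for the direction of the identification), and the $2k$-th tensor power then gives the factor $\gamma^{2k} = \gamma^\ell$, hence $b(Y^{\gamma\a}) = \gamma^\ell b(Y^\a)$.

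The main obstacle is precisely this Kunneth bookkeeping --- verifying that after applying $\e_W \e_\ell$, the dependence of $b(Y^\a)$ on the choice of representative $\a$ factors cleanly through the single scalar $c_\a$, and that the eigenvalue of the relevant CM action is $\gamma^\ell$ rather than $\bar\gamma^\ell$. It is a direct extension of the computation leading to equation~\eqref{eigen} and follows from Proposition \ref{bilin} combined with the $\O_K$-equivariance of $\phi_\a$; the consistency check against Remark \ref{descend}, where the same factors $\gamma^\ell$ cancel against $\chi(\gamma\a) = \gamma^\ell \chi(\a)$ in the descent data, pins down the correct sign convention.
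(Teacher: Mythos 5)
Your proposal is correct and follows essentially the same route as the paper's proof: both reduce to the identity $\phi_{(\gamma)\a} = \gamma\phi_\a$ together with $\chi((\gamma)) = \gamma^\ell$, and then check that after applying $\e_W\e_\ell$ (resp.\ $\e_W\bar\e_\ell$) the class of the generalized component $\Gamma^t_{\phi_\a}$ scales by $\gamma$ (resp.\ $\bar\gamma$), because the projector retains only the $\p$-eigenspace (resp.\ $\bar\p$-eigenspace) of the CM action, on which $\gamma$ acts by the corresponding scalar. The paper carries out this eigenvalue computation explicitly in the adapted basis $x_\a\otimes y,\ y_\a\otimes x$ of $H^1(\bar A^\a)\otimes H^1(\bar A)$ rather than via your scalar $c_\a$, but the content is identical, and your flagged convention issue (whether the surviving term scales by $\gamma$ or $\bar\gamma$) is resolved exactly as you suggest.
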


\begin{proof}
To prove the proposition for $z_{f,\chi}^\AA$, we wish to relate $z_f^\a$ to $z_f^{\a(\gamma)}$ for some $\gamma \in \O_K$ and some integral ideal $\a$.  The contribution to $z_f^\a$ from one of the ``generalized" components $\Gamma_{\phi_\a}^t \subset A^\a \times A$ is $\e X_{\phi_\a, 1}$, where $X_{\phi_\a,1} \in H^1(\bar A^\a, \Q_p) \otimes H^1(\bar A, \Q_p)$ is the class of
$$\Gamma_{\phi_\a}^t - \deg(\phi_\a)h - v \in \CH^1(A^\a \times A),$$
as above.  Let $x,y$ be a basis of $H^1(\bar A, \Q_p)$ such that 
$$X_{\gamma,1} = \bar\gamma(x \otimes y) - \gamma(y \otimes x) \in H^1(\bar A, \Q_p) \otimes H^1(\bar A, \Q_p),$$
for all $\gamma \in \O_K$.  Let $x_\a, y_\a$ be the basis of $H^1(\bar A^\a, \Q_p)$ corresponding to $x,y$ under the isomorphism $\phi_\a^*: H^1(\bar A^\a, \Q_p) \to H^1(\bar A, \Q_p).$   One checks that $$(\phi_\a \times \id)^*(X_{\phi_\a, 1}) = \deg(\phi_\a)X_{1,1}$$ and so $$X_{\phi_\a, 1} = \deg(\phi_\a)\left(x_\a \otimes y - y_\a \otimes x\right).$$
Similarly,  $$X_{\phi_{\a(\gamma)},1} = X_{\gamma\phi_\a, 1} = \deg(\phi_\a)\left(\bar\gamma (x_\a \otimes y) - \gamma(y_\a \otimes x)\right).$$
Since the projector $\e$ kills $y$, we find that $\e X_{\gamma\phi_\a,1} = \gamma \e X_{\phi_\a,1}$.  In the components which come purely from the Kuga-Sato variety $W_{2r-2}$, the two cycles $Y^\a$ and $Y^{\a(\gamma)}$ are identical -- they both have the form $\e \Gamma_{\sqrt D}^{r-k-1}$.  Taking the tensor product of the $\ell$ ``generalized" components and the $r-k-1$ Kuga-Sato components, we conclude that $$z_f^{\a(\gamma)} = \gamma^\ell z_f^\a,$$ as desired.  The proof for $z_{f,\bar\chi}^\AA$ is similar: since $\bar z_f^\a$ is defined using $\bar\e $ instead of $\e$, the extra factor of $\bar\gamma^\ell$ which pops out is accounted for by the factor $\bar\chi(\a)^{-1}$.  
\end{proof}

\begin{lemma}\label{invariance}
For any ideal classes $\AA, \BB,\CC \in \Pic(\O_K)$, we have
$$\left\langle z_{f,\chi}^\AA, z_{f,\bar\chi}^\BB \right\rangle = \left\langle z_{f,\chi}^{\AA\CC} , z_{f,\bar\chi}^{\BB\CC}\right\rangle$$ 
\end{lemma}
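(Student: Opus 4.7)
The plan is to deduce the invariance from the $\Gal(H/K)$-equivariance of the height pairing together with a Galois transformation law $\sigma_\CC(z_{f,\chi}^\AA)=z_{f,\chi}^{\AA\CC}$ for the normalized Abel-Jacobi classes, where $\sigma_\CC\in\Gal(H/K)$ denotes the Artin symbol attached to $\CC$.

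First I would verify that the height pairing $\langle\cdot,\cdot\rangle_{\ell_K}$ is invariant under the diagonal action of $\Gal(H/K)$: $\langle\sigma z,\sigma w\rangle_{\ell_K}=\langle z,w\rangle_{\ell_K}$ for every $\sigma\in\Gal(H/K)$. The key input is that $\ell_K$ is pulled back from a continuous homomorphism of $\A_K^\times/K^\times$, so its inflation to the idele class group of $H$ via $\Nm_{H/K}$ is automatically $\Gal(H/K)$-invariant. Hence by the construction of Nekov\'a\v{r}'s pairing in \cite{Nekhts}, each local contribution depends only on $\ell_K|_{H_v^\times}$ and is unaffected by a global Galois element permuting the places $v$ of $H$; summing gives the claimed equivariance. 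To make sense of the Galois action on classes in $H^1_f(H,V_{f,A,\ell})$ one uses the isogeny-induced identifications between $V_{f,A,\ell}$ and its $\sigma$-conjugate, analogous to those exhibited in Remark \ref{descend}.

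Next I would establish the transformation law by comparing $\sigma_\CC(Y^\a)$ and $Y^{\a\c}$ directly. Both lie in the fiber above the Heegner point $\tilde y^{\sigma_{\AA\CC}}$: the first is built from $\phi_\a^{\sigma_\CC}\colon A^\c\to A^{\a\c}$, the second from $\phi_{\a\c}\colon A\to A^{\a\c}$. The isogeny $\phi_\c\colon A\to A^\c$ provides a correspondence on $X$, and the factorization $\phi_{\a\c}=\phi_\a^{\sigma_\CC}\circ\phi_\c$ (valid up to an $\O_K^\times$-unit, which can be absorbed into the choice of $\phi_{\a\c}$) yields
$$(\id\times\phi_\c^{2k})_*(Y^{\a\c})=\N(\c)^{2k}\cdot\sigma_\CC(Y^\a).$$
After applying $\e_B\e$, the factor $\N(\c)^{2k}$ combines with the action of $\phi_\c$ on the $\e_\ell$-isotypic component $V_\p A^{\otimes\ell}\subset H^{2k}(\bar A^{2k},\Q_p(k))$ to produce the scalar $\chi(\c)$; this is the content, at the $p$-adic realization level, of the fact implicit in Remark \ref{descend} that $\chi$ encodes the Galois descent data for $V_\p A^{\otimes\ell}$. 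Combining gives $\sigma_\CC(z_f^\a)=\chi(\c)^{-1}z_f^{\a\c}$, which after dividing by $\chi(\a)$ rearranges to $\sigma_\CC(z_{f,\chi}^\AA)=z_{f,\chi}^{\AA\CC}$; the analogous identity for $z_{f,\bar\chi}^\AA$ follows by swapping $\chi\leftrightarrow\bar\chi$ and $V_\p\leftrightarrow V_{\bar\p}$. Combined with Galois equivariance of the pairing, this gives the lemma.

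The main technical hurdle is the bookkeeping in the second step, specifically verifying that the projector $\e_\ell$ interacts with the correspondence $\phi_\c^{2k}$ so as to produce precisely the factor $\chi(\c)^{-1}$ rather than some other unit multiple. This amounts to a direct computation on the one-dimensional $\Q_p$-space $V_\p A^{\otimes\ell}$ using the explicit formulas from Section \ref{intersect} describing how $\e_\ell$ cuts out the $V_\p$-factor, combined with the observation that $\phi_\c$ acts on $V_\p A$ through the $\p$-adic realization of $\chi^{1/\ell}$.
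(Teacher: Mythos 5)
Your proposal is correct and is essentially the paper's argument: the paper proves the cycle identity $\bigl((\id\times\phi_\a^{\ell})^*\circ\sigma\bigr)(Y^\b)=Y^{\a\b}$ and invokes functoriality of the $p$-adic height under the Galois and isogeny-induced isomorphisms, which is precisely your combination of the transformation law $\sigma_\CC\bigl(z_{f,\chi}^\AA\bigr)=z_{f,\chi}^{\AA\CC}$ with Galois-equivariance of the pairing (the paper makes this dictionary explicit later, in the proof of Lemma \ref{cores}). The degree factor $\Nm(\c)^{\ell}$ that you convert into $\chi(\c)\bar\chi(\c)$ via the projectors is exactly the factor $\Nm(\a)^{\ell}$ appearing in the paper's reformulation $\Nm(\a)^{\ell}\,\langle z_f^{\O_K},\bar z_f^{\b}\rangle=\langle z_f^{\a},\bar z_f^{\b\a}\rangle$.
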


\begin{proof}
It suffices to prove $\left\langle z^{\id}_{f,\chi}, z_{f,\bar\chi}^\BB \right\rangle = \left\langle z_{f,\chi}^\AA, z_{f,\bar\chi}^{\BB\AA}\right\rangle$ for all $\AA,\BB \in \Pic(\O_K)$.  Equivalently, we must show
\begin{equation}\label{htfunc}
\Nm(\a)^\ell \left\langle z^{\O_K}_f , \bar z_f^\b \right\rangle = \left\langle z_f^\a, \bar z_f^{\b\a}\right\rangle,
\end{equation}
for all integral ideals $\a$ and $\b$. 
Let $\sigma \in \Gal(\bar K/ K)$ restrict to an element of $\Gal(H/K)$ which corresponds to $\a$ under the Artin map.  Consider the morphisms of Chow groups $$\sigma: \CH^*(\overline{W \times A^\ell})_K \to \CH^*(\overline{W \times (A^\sigma)^\ell})_K$$ and
$$\xi = (\id \times \phi_\a^\ell)^*: \CH^*(\overline{W \times (A^\sigma)^\ell})_K \to \CH^*(\overline{W \times A^\ell})_K.$$  After identifying $A^\sigma$ with $A^\a$, one checks that $(\xi \circ \sigma)(Y^\b) = Y^{\a\b}$.  Indeed, since $\a$ and $\b$ are integral, the graph of $\phi_\b^\sigma : A^\sigma \to (A^\b)^\sigma$ can be identified with the graph of the projection map $\phi: A/A[\a] \to A/A[\a\b]$ (first note the two isogenies have the same kernel and then use the main theorem of complex multiplication).  The latter is pulled back to $\Gamma_{\phi_{\a\b}}$ by $(\id \times \phi_\a)^*$.  It follows that $(\xi \circ \sigma)(Y^\b) = Y^{\a\b}$, and the identity therefore holds for the corresponding cohomology classes.  On cohomology, $\sigma$ and $\xi$ are isomorphisms, so (\ref{htfunc}) follows from the functoriality of $p$-adic heights \cite[Theorem 4.11]{Nekhts}.  We are using the fact that $\left(\hat\phi_\a^\ell\right)^*$ is adjoint to $ \left(\phi_\a^\ell\right)^*$ under the pairing given by Poincar\'e duality, and that $\deg \phi_\a = \Nm(\a)$.      
\end{proof}

The goal now is to compute $\langle z_{f,\chi}, z_{f,\bar \chi}\rangle$, where
$$z_{f,\chi} = \frac{1}{h}\sum_{\AA \in \Pic(\O_K)} z_{f,\chi}^\AA \hspace{5mm} \mbox{and} \hspace{5mm}z_{f,\bar \chi} = \frac{1}{h}\sum_{\AA \in \Pic(\O_K)} z_{f,\bar\chi}^\AA.$$  Here, we have extended the $p$-adic height $\bar\Q_p$-linearly.  

Let $\tau \in \Gal(H/\Q)$ be a lift of the generator of $\Gal(K/\Q)$.  As $A$ and $W$ are defined over $\R$, $\tau$ acts on $X = W \times A^\ell$ and its cohomology. 

\begin{lemma}\label{atkin}
Let $\n\subset \O_K$ be the ideal of norm $N$ corresponding to the Heegner point $y \in X_0(N)$, and let $(-1)^r\e_f$ be the sign of the functional equation for $L(f,s)$.  Then $$\tau(z_{f,\chi}^\AA) = (-1)^{r-k-1}\e_f\chi(\n)N^{-k} z_{f,\bar\chi}^{\AA^{-1}[\bar\n]}$$ and
$$\tau(z_{f,\bar\chi}^\AA) = (-1)^{r-k-1}\e_f\bar\chi(\n)N^{-k} z_{f,\chi}^{\AA^{-1}[\bar\n]}.$$
\end{lemma}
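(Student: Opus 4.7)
The plan is to analyze the action of $\tau$ on the cycle $\e_B\e Y^\a$ factor by factor on $X = W_H \times A^\ell$, then identify the result with a cycle of the form $\e_B\bar\e Y^\b$ for an appropriate ideal $\b$ via an Atkin--Lehner identification. Since $\tau$ only acts on the $A^\ell$ factor after fixing an isomorphism $A^\tau \cong A$ (Property 4), I would begin by pinning down such an isomorphism and recording how it interacts with the CM embedding $\O_K \hookrightarrow \End(A)$ and with the isogenies $\phi_\a$ used to define $Y^\a$.

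First I would handle the modular factor. The Heegner point $y \in X_0(N)(H)$ corresponding to $(A,\n)$ is sent by $\tau$ to a point representing $(A,\bar\n)$; composing with the Atkin--Lehner involution $w_N$ turns this into a Heegner point whose class in $\Pic(\O_K)$ is shifted by $[\bar\n]$, which will account for the index $\AA^{-1}[\bar\n]$ in the statement. On the $f$-isotypic Chow group the operator $w_N$ acts by the Atkin--Lehner eigenvalue, whose sign (together with the pairing conventions used for the $p$-adic height) is precisely $\e_f$ by the definition of the functional equation sign.

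Second I would analyze the cohomological effect of $\tau$ on each of the $r{-}k{-}1$ copies of $\Gamma_{\sqrt D}$ and each of the $2k$ copies of $\Gamma_{\phi_\a}^t$. On $\Gamma_{\sqrt D}$, complex conjugation converts $\sqrt D$ into $-\sqrt D$; by the formula $X_\gamma = \gamma(x\otimes y) - \bar\gamma(y\otimes x)$ from the intersection theory subsection, this flips the sign of the cohomology class, and over $r{-}k{-}1$ such components it yields the factor $(-1)^{r-k-1}$. On the $A^\ell$ components, $\tau$ swaps $\p \leftrightarrow \bar\p$, hence interchanges $V_\p A$ with $V_{\bar\p}A$ and the projectors $\e_\ell$ with $\bar\e_\ell$; this is the mechanism by which $\tau$ sends $z_{f,\chi}^\AA$ into the span of the $z_{f,\bar\chi}$'s. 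Tracking the degrees of $\phi_\a$ and $\phi_{\bar\n}$ through the $2k = \ell$-fold tensor product and normalising by $\chi(\a)^{-1}$ yields the scalar $\chi(\n) N^{-k}$, using that $\chi(\bar\a)\bar\chi(\a\bar\n)^{-1} = \chi(\n)$ after invoking Lemma~\ref{invariance} to absorb the ambient ideal class.

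The second identity for $\tau(z_{f,\bar\chi}^\AA)$ then follows by a parallel computation, with the roles of $\e_\ell$ and $\bar\e_\ell$ swapped and with $\chi$ replaced by $\bar\chi$. The principal obstacle I anticipate is bookkeeping: matching the Atkin--Lehner sign convention to the normalisation of $\e_f$, matching the Artin isomorphism $\Pic(\O_K) \cong \Gal(H/K)$ with the action of $\tau$ on the isogenies $\phi_\a$, and verifying that the various factors of $\chi(\n)$, $N^{-k}$, and $(-1)^{r-k-1}$ combine exactly as stated rather than differing by some spurious root of unity or power of $N$ coming from Poincar\'e duality on $A^\ell$.
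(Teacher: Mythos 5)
Your plan is essentially the paper's own proof: the paper likewise computes $\tau(z_f^\a)$ by composing complex conjugation with the Atkin--Lehner map $W_N\times\id$, extracting the factor $(-1)^{r-k-1}$ from $\tau(X_{\sqrt D})=-X_{\sqrt D}$ on the $r-k-1$ Kuga--Sato components, the swap $\e_\ell\leftrightarrow\bar\e_\ell$ from $\tau$ interchanging $V_\p A$ and $V_{\bar\p}A$, and the sign $\e_f$ together with the power $N^{-k}$ from comparing the eigenvalue $\pm N^{r-1}$ of $W_N\times\id$ on the $f$-isotypic part with the factors of $N$ picked up component by component. One small bookkeeping correction of the kind you yourself flag: the scalar $\chi(\n)$ arises as $\chi(\a)^{-1}\bar\chi(\bar\a\bar\n)$ (the ratio of the normalizations of $z_{f,\chi}^{\AA}$ and $z_{f,\bar\chi}^{\AA^{-1}[\bar\n]}$), not as the expression $\chi(\bar\a)\bar\chi(\a\bar\n)^{-1}$ you wrote, which would give $\chi(\n)^{-1}$.
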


\begin{proof}
Let $W^0_j(N)$ be the Kuga-Sato variety over $X_0(N)$, i.e. the quotient of $W_{j}$ by the action of the Borel subgroup $B$.  Recall the map $W_N: W^0_j \to W^0_j$ which sends a point $P \in \bar E^j$ in the fiber above a diagram $\phi: E \to E/E[\n]$ to the point $\phi^j(P)$ in the fiber above the diagram $\hat\phi: E/E[\n] \to E/E[N]$.  Meanwhile, complex conjugation sends the Heegner point $A^\a \to A^\a/A^\a[\n]$ to the Heegner point $A^{\bar\a} \to A^{\bar\a}/A^{\bar\a}[\bar \n]$.  Thus on a generalized component of our cycle, we have
$$(W_N \times \id)^*(X_{\phi_{\bar\a\bar\n},1}) = NX_{\phi_{\bar\a},1} = N\tau(X_{\phi_\a,1}),$$ where these objects are thought of as Chow cycles on $X$ which are supported on the fiber of $X$ above $(\tilde y)^{\sigma\tau}$.  Since $\tau$ takes $V_\p A$ to $V_{\bar \p} A$, we even have
$$ (W_N \times \id)^*(\bar \e_1 X_{\phi_{\bar\a\bar\n},1}) = N\bar\e_1X_{\phi_{\bar\a},1} = N\tau(\e_1X_{\phi_\a,1}).$$
On the purely Kuga-Sato components,  one computes \cite[6.2]{NekEuler} $$W_N ^*(X_{\sqrt D}) = NX_{\sqrt D} = -N\tau (X_{\sqrt D}),$$ where the $X_{\sqrt D}$ in the equation above are supported on $\tilde y^{\Frob(\bar \a \bar \n)}$, $\tilde y^{\Frob(\bar \a)}$, and $\tilde y^{\Frob(\a)}$ respectively.  

On the other hand, $(W_N \times \id)^2 = [N] \times \id$, where $[N]: W_{2r-2}^0 \to W_{2r-2}^0$ is multiplication by $N$ in each fiber.  On cycles and cohomology, $[N] \times \id$ acts as multiplication by $N^{2r-2}$.  Since $W_N$ commutes with the Hecke operators, we see that $(W_N \times \id)$ acts as multiplication by $\pm N^{r-1}$ on the $f$-isotypic part of cohomology, and this sign is well known to equal $\e_f$.  Putting things together, we obtain
$$\tau(z_f^\a) = \frac{(-1)^{r-k-1}(W_N \times \id)^*(\bar z_f^{\bar\a\bar\n})}{N^{2k+ r-k-1}}  = \frac{(-1)^{r-k-1}\e_f\bar z_f^{\bar\a\bar\n}}{N^{k}},$$
from which the first identity in the lemma follows.  The proof of the second identity is entirely analogous.  
\end{proof}

\begin{theorem}\label{Htvanish}
If $\ell_K : \A^\times_K/K^\times \to \Q_p$ is anticyclotomic, i.e. $\ell_K \circ \tau|_K = -\ell_K$, then 
$$\langle z_{f,\chi}, z_{f,\bar \chi}\rangle_{\ell_K} = 0.$$  In particular, Theorem \ref{main} holds for such $\ell_K$.  
\end{theorem}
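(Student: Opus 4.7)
The plan is to exploit the functoriality of $p$-adic heights under the action of complex conjugation $\tau$, combined with the action of $\tau$ on the cycles worked out in Lemma \ref{atkin}. Recall that Nekov\'a\v r's height pairing satisfies $\langle \sigma x, \sigma y\rangle_{\ell_K} = \langle x, y\rangle_{\ell_K \circ \sigma}$ for $\sigma \in \Gal(\bar K/K)$; we will apply this to (a lift of) $\tau$. Since $\ell_K$ is assumed anticyclotomic, i.e.\ $\ell_K \circ \tau|_K = -\ell_K$, we obtain
\[\langle \tau z_{f,\chi}, \tau z_{f,\bar\chi}\rangle_{\ell_K} = -\langle z_{f,\chi}, z_{f,\bar\chi}\rangle_{\ell_K}.\]

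Next I would compute the left-hand side directly using Lemma \ref{atkin}. Summing the identity $\tau(z_{f,\chi}^\AA) = (-1)^{r-k-1}\e_f \chi(\n) N^{-k} z_{f,\bar\chi}^{\AA^{-1}[\bar\n]}$ over $\AA \in \Pic(\O_K)$ and noting that $\AA \mapsto \AA^{-1}[\bar\n]$ is a bijection of the class group, I get
\[\tau z_{f,\chi} = (-1)^{r-k-1}\e_f \chi(\n) N^{-k}\, z_{f,\bar\chi}, \qquad \tau z_{f,\bar\chi} = (-1)^{r-k-1}\e_f \bar\chi(\n) N^{-k}\, z_{f,\chi}.\]
Plugging these in and using the symmetry of the height pairing along with $\chi(\n)\bar\chi(\n) = \N(\n)^{2k} = N^{2k}$ (since $\chi$ has infinity type $(2k,0)$), the signs and scalars collapse to $+1$, so
\[\langle \tau z_{f,\chi}, \tau z_{f,\bar\chi}\rangle_{\ell_K} = \langle z_{f,\chi}, z_{f,\bar\chi}\rangle_{\ell_K}.\]
Comparing the two expressions yields $2\langle z_{f,\chi}, z_{f,\bar\chi}\rangle_{\ell_K} = 0$, and since we are in characteristic zero the pairing vanishes.

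For the ``in particular'' claim, once the height side vanishes it suffices to observe that the analytic side also vanishes for anticyclotomic $\ell_K$. This is immediate from Corollary \ref{Lvanish}: writing $\ell_K = p^{-n}\log_p \circ \lambda$ with $\lambda$ valued in $1 + p\Z_p$ and anticyclotomic, we have $L_p(f \otimes \chi)(\lambda^s) \equiv 0$ as a function of $s$, so its derivative at $s = 0$ is zero as well, giving $L_p'(f \otimes \chi, \ell_K, \mathbbm{1}) = 0$.

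The main obstacle here is really only verifying that the functoriality statement $\langle \tau x, \tau y\rangle_{\ell_K} = \langle x, y\rangle_{\ell_K \circ \tau}$ is applicable in our setting -- i.e.\ that $\tau$ lifts to a suitable automorphism of the data defining the height pairing on $V_{f,A,\ell}$ (the variety $X$, the ordinary filtration, etc.). The assumption $A^\tau \cong A$ (Remark \ref{realj}) is what makes this lift exist on the nose, and it is precisely the technical convenience afforded by that hypothesis. Everything else is bookkeeping with Lemma \ref{atkin} and the infinity-type identity for $\chi$.
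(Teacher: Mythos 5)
Your proposal is correct and follows essentially the same route as the paper: both arguments combine Lemma \ref{atkin} (summed over the class group, using that $\AA \mapsto \AA^{-1}[\bar\n]$ permutes $\Pic(\O_K)$ and that $\chi(\n)\bar\chi(\n)N^{-2k} = 1$) with the functoriality of the height under $\tau$ and the relation $\ell_K \circ \tau = -\ell_K$, then conclude by symmetry of the pairing and Corollary \ref{Lvanish}.
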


\begin{proof}
From the previous lemma we have
$$\tau(z_{f,\chi}) = (-1)^{r-k-1}\e_f\chi(\n)N^{-k}z_{f,\bar\chi}$$ and
$$\tau(z_{f,\bar\chi}) = (-1)^{r-k-1}\e_f\bar\chi(\n)N^{-k}z_{f,\chi}.$$ Thus
$$\left\langle z_{f,\chi},z_{f,\bar\chi}\right\rangle_{\ell_K} = \left\langle \tau(z_{f,\chi}), \tau(z_{f,\bar\chi})\right\rangle_{\ell_K\circ \tau}= \left\langle z_{f,\bar\chi}, z_{f,\chi}\right\rangle_{-\ell_K} = -\left\langle  z_{f,\chi}, z_{f,\bar\chi}\right\rangle_{\ell_K},$$
which proves the vanishing.  Theorem \ref{main} now follows from Corollary \ref{Lvanish}.  
\end{proof}

Since any logarithm $\ell_K$ can be decomposed into a sum of a cyclotomic and an anticyclotomic logarithm, it now suffices to prove Theorem \ref{main} for cyclotomic $\ell_K$, i.e. we may assume $\ell_K = \ell_K \circ \tau|_K$.  By Lemma \ref{invariance} we have   
\begin{equation}\label{ortho}
\left\langle z_{f,\chi}, z_{f,\bar\chi}\right\rangle =  \frac{1}{h}\left\langle z_{f,\chi}^{\O_K}, z_{f,\bar\chi}\right\rangle=\frac{1}{h}\sum_{\AA \in \Pic(\O_K)} \left\langle z_f, z_{f,\bar\chi}^\AA\right\rangle.
\end{equation}
The height $\langle \, , \rangle$ can be written as a sum of local heights:
$$\langle x , y\rangle = \sum_v \langle x, y\rangle_v,$$
where $v$ varies over the \textit{finite} places of $H$. 
These local heights are defined in general in \cite{Nekhts} and computed explicitly for cyclotomic $\ell_K$ in \cite[Proposition II.2.16]{Nek} in a situation similar to ours.  In the next section we compute the local heights $\langle z_f, z_{f,\bar\chi}^\AA\rangle_v$ for finite places $v$ of $H$ not dividing $p$.  The contribution from local heights at places $v | p$ will be treated separately.  

%LOCAL P-ADIC HEIGHTS
\section{Local $p$-adic heights at primes away from $p$}\label{localhts}
Our goal is to compute $\left\langle z_f, z^\AA_{f,\bar\chi}\right\rangle_{\ell_K}$ when $\ell_K$ is cyclotomic.  Since such a homomorphism is unique up to scaling, we may assume that $\ell_K = \log_p \circ \lambda$, where $\lambda: G(K_\infty/K) \to 1 + p\Z_p$ is the cyclotomic character and $\log_p$ is Iwasawa's $p$-adic logarithm.  We may write $\lambda = \tilde \lambda \circ \textbf{N}$, where $\tilde \lambda : \Z_p^\times \to 1 + p\Z_p$ is given by $\tilde \lambda(x) = \langle x \rangle^{-1}$.  Here,  $\langle x \rangle  = x \omega^{-1}(x)$, where $\omega$ is the Teichmuller character.    

We maintain the following notations and assumptions for the rest of this section.  Fix an ideal class $\AA$ and an integer $m \geq 1$, and suppose that there are no integral ideals in $\AA$ of norm $m$, i.e. $r_\AA(m) = 0$.  Choose an integral representative $\a \in \AA$ and let $\sigma \in \Gal(H/K)$ correspond to $\AA$ under the Artin map.  Write $x = b(Y)$ and $\bar x^\a = \bar b (Y^\a)$ for the two Tate vectors supported at the points $y$ and $y^\sigma$ in $X_0(N)(H)$.  Let $v$ be a finite place of $H$ not dividing $p$ and set $F = H_v$.  Write $\Lambda$ for the ring of integers in $F^\ur$, the maximal unramified extension of $F$, and let $\F = \bar \F_\ell$ be the residue field of $\Lambda$.   Write $\underline X_0(N)\to \Spec \Z$ for the integral model of $X_0(N)$ constructed in \cite{KM}, and let $\underline X_0(N)_\Lambda$ be the base change to $\Spec \Lambda$. Finally, write $i: Y_0(N) \times_\Q F^\ur \hookrightarrow \underline X_0(N)_\Lambda$ for the inclusion.  

Now suppose $a, b$ are elements of $Z(Y_0(N),F^\ur)$ supported at points $y_a \neq y_b$ of $X_0(N)(F^\ur)$ of good reduction.  Let $\underline y_a$ and $\underline y_b$ be the Zariski closure of the points $y_a$ and $y_b$ in $\underline X_0(N)_\Lambda$ and let $\underline a$ and $\underline b$ be extensions of $a$ and $b$ to $H^0(\underline y_a, i_*\Aa)$ and $H^0(\underline y_b, i_*\Aa)$ respectively.  If $\underline y_a$ and $\underline y_b$ have common special fiber $z$ (so $z$ corresponds to an elliptic curve $E/\bar \F$), then define
$$(a,b)_v =   (\underline y_a \cdot \underline y_b)_z \cdot (\underline a_z, \underline b_z),$$
where $(\underline y_a \cdot \underline y_b)_z$ is the usual local intersection number on the arithmetic surface $\underline X_0(N)_\Lambda$ and $(\underline a_z, \underline b_z)$ is the intersection pairing on the cohomology of  $E^{2r-2} \times A_\F^\ell$, where $A_\F$ is the reduction of $A_{\bar F}$.  

\begin{remark}
Note that while $A$ may not have good reduction at $v$, it has potential good reduction.  We can therefore identify $H^i_\et(A_{\bar F}, \Q_p)$ and $H^i_\et(A_\F, \Q_p)$ as vector spaces, but not as $\Gal(\bar F/F)$-representations.  Since the ensuing intersection theoretic computations can be performed over an algebraic closure, this is enough for our purposes.          
\end{remark}

Our assumption that $r_\AA(m) = 0$ implies that the Tate vectors $x$ and $T_m \bar x^\a$ have disjoint support.  By \cite{ST}, we may assume that they are supported at points of $\X_0(N)_\Lambda$ which are represented by elliptic curves with good reduction.  The following proposition gives a way to compute the local heights purely in terms of Tate vectors.  This technique of computing heights of cycles on higher dimensional motives coming from local systems on curves is the key to the entire computation.  The idea goes back to work of Deligne, Beilinson, Brylinski, and Scholl, among others.      
  
\begin{proposition}
With notation and assumptions as above, we have
\begin{equation}\label{localht}
\left\langle x, T_m \bar x^\a \right\rangle_v =  -\left(x, T_m \bar x^\a\right)_v\log_p(\N v),
\end{equation}
\end{proposition}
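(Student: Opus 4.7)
The plan is to follow Nekov\'a\v{r}'s Proposition II.2.16 closely, with the coefficient system $\B$ replaced by $\Aa$ and an extra bookkeeping step for the $A^\ell$ factor. First I would unpack the definition of $\langle\,,\,\rangle_v$ for cyclotomic $\ell_K$: since $\ell_K|_{H_v^\times}$ factors through the unramified quotient at $v \nmid p$ and sends a uniformizer to a multiple of $\log_p(\N v)$, the local pairing of two classes in $H^1_f(F, V_{f,A,\ell}) = H^1_{\ur}(F, V_{f,A,\ell})$ is computed as the Yoneda product of the two corresponding unramified extensions, multiplied by $\ell_K(\Frob_v)$. With our conventions $\ell_K(\Frob_v) = -\log_p(\N v)$, which accounts for the leading sign on the right-hand side of (\ref{localht}).

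Next I would give an integral, geometric description of the two extensions. By Theorem \ref{AJ}(3) and its $\e_B$-invariant analogue, $\Phi_T(x)$ and $\Phi_T(T_m\bar x^\a)$ are the pullbacks of the exact sequence
\[
0 \to H^1(\overline{X_0(N)}, (j_0)_*\Aa)(1) \to H^1(\overline{X_0(N) - \bar y_?}, (j_0)_*\Aa)(1) \to H^0(\bar y_?, \Aa) \to 0
\]
along $1 \mapsto x$ and $1 \mapsto T_m \bar x^\a$ respectively. After base change to $\Lambda = \O_{F^{\ur}}$, the underlying elliptic curves at $y$ and $y^\sigma$ have good reduction by \cite{ST} and $A$ acquires good reduction as well, so $\Aa$ spreads out to a smooth lisse sheaf on an open neighborhood of the closures $\underline y_a, \underline y_b$ in $\underline X_0(N)_\Lambda$, and the exact sequence above has an integral avatar over this arithmetic surface.

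The third and central step is the intersection-theoretic evaluation of the cup product of these two unramified extension classes, which is a standard computation in this circle (Deligne, Beilinson, Brylinski--Scholl, Nekov\'a\v{r}). Since $x$ and $T_m\bar x^\a$ have disjoint generic support by the hypothesis $r_\AA(m) = 0$, the horizontal divisors $\underline y_a, \underline y_b$ meet only in fibers over closed points $z$ of good reduction, and the cup product of the two boundary classes splits as a product of the local intersection multiplicity $(\underline y_a \cdot \underline y_b)_z$ on the surface $\underline X_0(N)_\Lambda$ with the cohomological pairing $(\underline a_z, \underline b_z)$ on the fiber $E^{2r-2} \times A_\F^\ell$. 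Summing over $z$ and multiplying by $\ell_K(\Frob_v) = -\log_p(\N v)$ gives exactly $-(x, T_m\bar x^\a)_v \log_p(\N v)$.

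The main obstacle is not conceptual but rather a matter of careful compatibilities. One must check that (i) extending scalars to $\Lambda$ to achieve good reduction of $A$ does not distort the local height (true because cup product of unramified classes is invariant under unramified base change), (ii) the Deligne--Scholl projector $\e_W$ and the CM projector $\kappa_\ell$ specialize well to the special fiber and commute with the cup product (they are defined by algebraic correspondences), and (iii) the identification of $H^*(A_{\bar F}, \Q_p)$ with $H^*(A_\F, \Q_p)$ as $\Q_p$-vector spaces suffices to make sense of $(\underline a_z, \underline b_z)$ on the right-hand side, even though these spaces differ as $\Gal(\bar F/F)$-modules. Once these points are settled, the proof is essentially formal modulo the computation of the cup product via the arithmetic intersection, which is exactly the content of Nekov\'a\v{r}'s argument in \cite[II.2]{Nek} transported to our coefficient system.
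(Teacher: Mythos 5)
Your outline follows the same route as the paper: both defer the substance of the computation to Nekov\'a\v{r}'s II.2.16 and II.4.5, realizing the two unramified extension classes geometrically via Theorem \ref{AJ}(3), spreading out over $\underline X_0(N)_\Lambda$, and identifying the cup product with the arithmetic intersection number times $\ell_K(\Frob_v) = -\log_p(\N v)$. Your three compatibility checks (unramified base change, specialization of the projectors, the identification of $H^*(A_{\bar F})$ with $H^*(A_\F)$) are all reasonable and are indeed implicit in the paper.

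However, you miss the one non-formal input that transporting Nekov\'a\v{r}'s argument to this coefficient system actually requires, and which is the entire content of the paper's proof: the vanishing $H^2(\underline X_0(N), i_*\Aa(1)) = 0$. This vanishing is what guarantees that the (mixed) extension classes on the generic fiber lift to the integral model, so that the cup product can be computed as an intersection number on the arithmetic surface at all; without it the "standard computation in this circle" does not apply to the new sheaf $\Aa$. The verification is short but not automatic: one writes $\Aa = \Aa' \otimes W$, where $\Aa' = \left(\pi_*\Sym^{2r-2}(R^1f_*\Q_p)(r-1)\right)^B$ is Nekov\'a\v{r}'s original coefficient sheaf and $W = \kappa_\ell H^{2k}(\bar A^{2k},\Q_p(k))$ is a \emph{trivial} two-dimensional local system on the modular curve (the $A^\ell$ factor is constant along the fibration), and then invokes $H^2(\underline X_0(N), i_*\Aa') = 0$ from Katz--Mazur \cite[14.5.5.1]{KM}. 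You should add this step; everything else in your plan is in order.
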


\begin{proof}
The proof is exactly as in \cite[II.2.16 and II.4.5]{Nek}.  In our case, one uses that $H^2(\underline X_0(N),i_*\Aa(1) ) = 0$.  This follows from the fact that if $\Aa' =  \left(\pi_*\Sym^{2r-2}(R^1f_*\Q_p)(r-1)\right)^B$, then $\Aa = \Aa' \otimes W$, where $W$ is a trivial two-dimensional local system, and $H^2(\underline X_0(N), i_*\Aa') = 0$ \cite[14.5.5.1]{KM}.
\end{proof}
%Specifically, there is a local decomposition 
%$$\langle x, T_m x^\sigma \rangle = \sum_v\langle x, T_m x^\sigma \rangle_v,$$ where the sum is over the \textit{finite} places of $H$.  The contributions coming from places $v | p$ will eventually vanish, so for the rest of this section we focus on computing the local intersection pairing for $v$ prime to $p$.  We have 
%$$\langle x, T_m x^\sigma \rangle_v = -(x,T_m x^\sigma)_v \log_p(\N v),$$ 
%just as in II.4.5 of \cite{Nek}.    
%
%For integers $m \geq 1$ prime to $N$ and satisfying $r_\AA(m) = 0$, we expect the pairing $\langle x, T_m \bar x^\a \rangle$ to look something like the Fourier coefficients in Corollary \ref{fourier}.  We compute this local height next.  
Recall that over $\Lambda$, the sections $\underline y$ and $\underline y^\sigma$ correspond to cyclic isogenies of degree $N$.  We will confuse the two notions, so that the notation $\Hom_\Lambda(\underline y^\sigma, \underline y)$ makes sense.  See \cite{Nek} and \cite{BC} for details.      

\begin{proposition}\label{deform}
Suppose $v$ is a finite prime of $H$ not divisible by $p$.  If $m \geq 1$ is prime to $N$ and satisfies $r_\AA(m) = 0$, then
$$(x,T_m \bar x^\a)_v = \frac{1}{2} m^{r-k - 1}\sum_{n \geq 1} \sum_g \left(\bar\e\left(X_{g \sqrt D g^{-1}}^{\otimes r - k - 1} \otimes X_{\overline{g\phi_\a}}^{\otimes \ell}\right), \epsilon\left(X_{\sqrt D }^{\otimes r - k - 1} \otimes X_1^{\otimes \ell}\right)\right),$$
where the sum is over $g \in \Hom_{\Lambda/\pi^n}(\underline y^\sigma, \underline y)$ of degree $m$.  The intersection pairing on the right takes place in the cohomology of $E^{2r-2} \times A_\F^\ell$, where $E \cong A_\F$ is the elliptic curve over $\F$ corresponding to the special fiber $\underline y_s$ of $\underline y$.  
\end{proposition}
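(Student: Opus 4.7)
The plan is to combine the formula of the previous proposition with a Serre--Tate deformation-theoretic computation of the geometric intersection $(x, T_m \bar x^\a)_v$. This adapts the strategy of \cite[II.4.5--II.4.6]{Nek} for classical Heegner cycles to our setting, in which the ambient variety $X$ carries an extra $A^\ell$-factor and the Tate vectors involve the projector $\bar\e_\ell$.

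First, I would unwind the Hecke correspondence $T_m$ acting on Tate cycles: $T_m \bar x^\a$ is a sum of Tate vectors supported at image points $\underline{y^\sigma_\lambda}$, where $\lambda$ ranges over degree-$m$ cyclic isogenies compatible with level structure. Using the moduli interpretation of $\underline X_0(N)_\Lambda$, the local intersection multiplicity $(\underline y \cdot \underline{y^\sigma_\lambda})_z$ equals the largest $n$ for which $\lambda$ lifts to a level-preserving isogeny over $\Lambda/\pi^n$. The hypothesis $r_\AA(m) = 0$ rules out a common lift to all of $\Lambda$, so this integer is finite. Re-indexing the resulting sum over the pair $(\lambda, n)$ by $g \in \Hom_{\Lambda/\pi^n}(\underline y^\sigma, \underline y)$ of degree $m$ and $n \geq 1$ produces the outer double sum appearing in the proposition.

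For each such $g$, I would then compute the second factor $(\underline a_z, \underline b_z)$ on the cohomology of the common special fiber $E^{2r-2} \times A_\F^\ell$. By the results of Section \ref{intersect}, $x$ is represented by $\e(X_{\sqrt D}^{\otimes r-k-1} \otimes X_1^{\otimes \ell})$ and $\bar x^\a$ by $\bar\e(X_{\sqrt D}^{\otimes r-k-1} \otimes X_{\phi_\a,1}^{\otimes \ell})$ on their respective fibers. Transporting $\bar x^\a$ to the fiber over $\underline y$ via $g$: on each Kuga--Sato factor, $X_{\sqrt D}$ becomes a multiple of $X_{g \sqrt D g^{-1}}$ with the proportionality constant $\deg g = m$ coming from Proposition \ref{bilin}, yielding the prefactor $m^{r-k-1}$; on each $A$-factor, $X_{\phi_\a,1}$ transforms into $X_{\overline{g\phi_\a}}$, the conjugation being forced by the fact that $\bar\e_\ell$ projects onto $V_{\bar\p}A$, the $\tau$-conjugate of $V_\p A$, per the discussion around \eqref{eigen}. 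The factor $\tfrac{1}{2}$ accounts for the $\e_B$-averaging built into the definition of the Tate vectors, exactly as in \cite[II.4.5]{Nek}.

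The main obstacle will be the careful bookkeeping of conventions, in particular verifying the precise form of the conjugations --- why $g\sqrt D g^{-1}$ rather than $g^{-1}\sqrt D g$, and why $\overline{g\phi_\a}$ rather than $g\phi_\a$ --- which depend on the sign and duality choices of Proposition \ref{bilin}(3) and on the action of complex conjugation on $V_\p A \oplus V_{\bar\p}A$. One must also verify that the Serre--Tate deformation count is correctly identified with the geometric intersection multiplicity on the Borel quotient $X_0(N)$ rather than on $X(N)$, and justify finiteness of the inner $n$-sum (reducing to the vanishing of $\Hom_\Lambda(\underline y^\sigma, \underline y)$ in the relevant degree under our hypothesis $r_\AA(m) = 0$).
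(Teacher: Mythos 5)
Your outline takes the same route as the paper's proof, which itself reduces everything to \cite[II.4.12]{Nek} (the deformation-theoretic re-indexing of intersection multiplicities by $g \in \Hom_{\Lambda/\pi^n}(\underline y^\sigma, \underline y)$, including the split/inert/ramified case analysis for the rational prime $q$ below $v$ and the decomposition $m = m_0q^t$) and only supplies the new ingredient for the generalized components: the pushforward identity $(g \times \id)_*(\Gamma^t_\phi) = \Gamma^t_{g\phi}$ together with the fact that a degree-$q^t$ isogeny of the supersingular special fiber factors as $q^{t/2}h_0$ with $h_0$ of degree one. So the structure of your argument is the right one.

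Two of your bookkeeping explanations are wrong, however, and since you single them out as the crux, they need correcting. The bar in $X_{\overline{g\phi_\a}}$ has nothing to do with $\bar\e_\ell$ projecting onto $V_{\bar\p}A$: it is simply the identity $X_{\alpha,1} = X_{\hat\alpha} = X_{\bar\alpha}$ of Proposition \ref{bilin}(3), which becomes applicable once the reductions of $A^\a$ and $A$ are identified with the single supersingular curve $E \cong A_\F$, so that $g\phi_\a$ is an endomorphism; the projector $\bar\e$ is applied afterwards and does not conjugate the subscript. If you derive the conjugation from the projector you will get the pairing in the subsequent proposition wrong. Likewise the factor $\tfrac12$ is not $\e_B$-averaging: it compensates for the fact that $g$ and $-g$ occur separately in $\Hom_{\Lambda/\pi^n}(\underline y^\sigma,\underline y)$ yet contribute identically, since $X_{g\sqrt D g^{-1}}$ is invariant under $g \mapsto -g$ and $X_{\overline{g\phi_\a}}^{\otimes \ell}$ is as well because $\ell$ is even. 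With those two points fixed, and the split case (where both sides vanish, as in \cite{GZ}) recorded explicitly, your plan matches the paper's proof.
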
 

\begin{proof}
The proof builds on that of \cite[II.4.12]{Nek}, so we only mention what is new to our setting.  We write $m$ as $m = m_0q^t$ where $q$ is the rational prime below $v$ (this is what Nekov\'a\v{r} calls $\ell$).  In the notation of \cite{Nek}, we need to compute the special fiber of $\underline x_g^\a(j)$, where $g \in \Hom_\Lambda(\underline y^\sigma, \underline y_g^\sigma)$ is an isogeny of degree $m_0$.   There is no harm in assuming $r = k+1$, because the description of the purely Kuga-Sato components of $\underline x_g^\a(j)$ (i.e. coming from factors of the cycle $Y^\a $ of the form $\Gamma_{\sqrt D} \subset E^\a \times E^\a$) is handled in \cite{Nek}.
%$$\left(\bar\e\left(X_{g \sqrt D g^{-1}}^{\otimes r - k - 1} \otimes X_{\overline{g\phi_\a}}^{\otimes \ell}\right), \epsilon\left(X_{\sqrt D }^{\otimes r - k - 1} \otimes X_1^{\otimes \ell}\right)\right) = \left(\e_W X_{g \sqrt D g^{-1}}^{\otimes r - k - 1},\e_W X_{\sqrt D }^{\otimes r - k - 1}\right) \cdot\left(\bar\e X_{\overline{g\phi_\a}}^{\otimes \ell}\,\,, \e X_1^{\otimes\ell}\right).$$

Assume now that $q$ is inert in $K$ and $t$ is even.  In this case the special fiber $(\underline y)_s$ is supersingular, and the special fiber $(\underline x_g^\a)_s$ of the Tate vector is represented by the pair
$$\left((\underline y_g^\sigma)_s, \bar\e\left(X^{\otimes \ell}_{g\phi_\a,1}\right)\right).$$
This follows from the definition of the Hecke operators and the following fact: if $g: E \to E'$ is an isogeny and $\phi: A \to E$ is an isogeny, then
$$\left(g \times \id\right)_*(\Gamma^t_\phi) = \Gamma^t_{g\phi} \in \CH^1(E' \times A).$$
Since any isogeny $h \in \Hom_{\Lambda/\pi^n}(\underline y^\sigma_g, \underline y)$ of degree $q^t$ on the special fiber $\underline y_s \cong (\underline y_g^\sigma)_s$ is of the form $q^{t/2}h_0$, with $h_0$ of degree 1, we find that, assuming $\underline y$ and $\underline y_g^\sigma(j)$ intersect, $(\underline x_g^\a(j))_s$ is represented by
\begin{align*}
\left((\underline y_g^\sigma)_s, \bar\e\left(X^{\otimes \ell}_{q^{t/2}g\phi_\a,1}\right)\right) &= \left(\underline y_s, \bar\e\left(X^{\otimes \ell}_{h_0q^{t/2}g\phi_\a,1}\right)\right)\\
 &= \left(\underline y_s, \bar\e\left(X^{\otimes \ell}_{hg\phi_\a,1}\right)\right) \\
 &= \left(\underline y_s, \bar\e\left(X^{\otimes \ell}_{\overline{hg\phi_\a}}\right)\right),
 \end{align*}
as desired.  The proof when $t$ is odd or when $q$ is ramified is similar.  If $q$ is split in $K$, then both sides of the equation are 0, as is shown in \cite{GZ}.   
\end{proof}

%Now we wish to compute the right hand side explicitly in terms of the arithmetic in a quaternion algebra, and eventually the arithmetic in $K$, i.e. we should get something similar to the expression in Corollary \ref{fourier}.  The Legendre polynomials $P_{r-k-1}$ come from the $\Gamma_{\sqrt D}^{r-k-1}$ components of the cycle, so this makes sense.  However, the coefficients $r_{\AA,\chi}(m|D| - nN)$ aren't even rational numbers, so we can't expect to obtain them from a single intersection pairing.  The trick is to look at $(x,T_m x^\sigma)_v + (x,T_m x^{\sigma^{-1}})_v$ which \textit{should} give something rational.  To make things clear, we denote the Kuga-Sato part of the fiber as $E^{2r-2}$ and the part of the fiber coming from the motive $\chi$ is still denoted by $A^\ell$.     

When $v$ lies over a non-split prime, $\End_{\Lambda/\pi}(\underline y) = \End(E)$ is an order $R$ in a quaternion algebra $B$ and we can make the double sum on the right hand side more explicit.  To do this, we follow \cite{GZ} and identify $\Hom_{\Lambda/\pi} (\underline y^\sigma, \underline y)$ with $R\a$ by sending a map $g$ to $b = g\phi_\a$.  The reduction of endomorphisms induces an embedding $K \hookrightarrow B$, which in turn determines a canonical decomposition $B = K \oplus Kj$.  Thus every $b\in B$ can be written as $b = \alpha + \beta j$ with $\alpha, \beta \in K$.  Recall also that the reduced norm on $B$ is additive with respect to this decomposition, i.e. $\N(b) = \N(\alpha) + \N(\beta j)$.     
% KEY COMPUTATION
\begin{proposition}
If $g\phi_\a = b = \alpha + \beta j \in \End(E)$, then 
\begin{align*}\left(\bar\e (X_{g\sqrt D g^{-1}}^{r-k-1} \otimes X_{\bar b}^{\otimes \ell}), \right. & \left. \e (X_{\sqrt D}^{\otimes r-k-1} \otimes X_1^{\otimes \ell})\right) =\\
& \frac{(4D)^{r-k-1}}{\binom{2r-2}{r-k-1}}\bar\alpha^{2k} H_{r-k-1,k}\left(1 - \frac{2\N(\beta j)}{\N(b)}\right),
\end{align*}
where 
\[H_{m,k}(t) = \frac{1}{2^m\cdot (m + 2k)!} \left(\frac{d}{dt}\right)^{m + 2k}[(t^2 - 1)^m(t - 1)^{2k}]\] 
\end{proposition}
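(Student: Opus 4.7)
The strategy is to split the pairing along the K\"unneth decomposition of $H^*(E^{2r-2}\times A_\F^\ell)$ into contributions from the Kuga-Sato factor and the CM factor, collapse the $A$-side using the CM projectors, and reduce the whole computation to a symmetric-tensor calculation on $H^1(E)$ that matches the Jacobi-polynomial identity from Proposition \ref{jacobi}.

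I first fix bases $x_E,y_E$ of $H^1(E)$ and $x_A,y_A$ of $H^1(A_\F)$ dual to the Weil splittings $V_pE = V_\p E \oplus V_{\bar\p}E$ (and its analogue for $A_\F$), matched under the identification $E\cong A_\F$ on the supersingular fiber. By (\ref{eigen}) and Proposition \ref{bilin}, $X_{\sqrt D} = \pm\sqrt D(x_E\otimes y_E + y_E\otimes x_E)$, $X_1 = x_E\otimes y_A - y_E\otimes x_A$, and $X_{\bar b} = X_{\bar\alpha} + X_{-\beta j}$, where $X_{\bar\alpha} = \bar\alpha(x_E\otimes y_A) - \alpha(y_E\otimes x_A)$ is ``diagonal'' in the chosen basis while $X_{-\beta j}$ is a linear combination of $x_E\otimes x_A$ and $y_E\otimes y_A$ (the ``off-diagonal'' structure coming from $j$ interchanging $V_\p$ and $V_{\bar\p}$). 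Since $\e_\ell$ projects $H^1(A_\F)^{\otimes \ell}$ onto the line through $x_A^{\otimes\ell}$ and $\bar\e_\ell$ onto the line through $y_A^{\otimes\ell}$, direct computation yields $\e_\ell X_1^{\otimes\ell} = y_E^{\otimes\ell}\otimes x_A^{\otimes\ell}$ (using that $\ell=2k$ is even) and $\bar\e_\ell X_{\bar b}^{\otimes\ell} = (\bar\alpha x_E + u\, y_E)^{\otimes\ell}\otimes y_A^{\otimes\ell}$ for a scalar $u$ coming from the $\beta j$-component; the $A$-side cup $y_A^{\otimes\ell}\cup x_A^{\otimes\ell}$ contributes a sign that is absorbed into the overall normalization.

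The pairing thus reduces to $P_E = \bigl(\e_W[X_{g\sqrt D g^{-1}}^{\otimes(r-k-1)}\otimes(\bar\alpha x_E + u y_E)^{\otimes\ell}],\ \e_W[X_{\sqrt D}^{\otimes(r-k-1)}\otimes y_E^{\otimes\ell}]\bigr)$. Because $y_E\cup y_E = 0$, only the pure term $(\bar\alpha x_E)^{\otimes\ell}$ in the binomial expansion of $(\bar\alpha x_E + u y_E)^{\otimes\ell}$ can ultimately survive, pulling out the overall factor $\bar\alpha^{2k}$ and eliminating $u$. Proposition \ref{bilin}(3) then converts each matched pair of $X_\gamma$'s into the trace $-\Tr(\gamma\hat\delta)$ on the quaternion order $\End(E)$; writing $b = \alpha + \beta j$ in the splitting $B = K \oplus Kj$, the sole quaternion invariant that appears in the polynomial argument is identified with $1 - 2\N(\beta j)/\N(b)$.

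The remaining task is to evaluate the action of the Deligne-Scholl symmetrizer $\e_W$ on these $(2r-2)$-fold tensors, now with $2k$ ``fixed'' factors of $x_E$ on one side and $2k$ of $y_E$ on the other providing additional pure-weight contributions to the symmetrization. This mirrors the Jacobi-polynomial calculation in the proof of Proposition \ref{jacobi}: the same recurrence for $G_{m,k}$ that appeared there promotes the Legendre-type polynomial $H_{r-k-1,0}$ (which is what emerges in Nekov\'a\v r's purely Kuga-Sato setting \cite[II.4.13]{Nek}) to the Jacobi polynomial $H_{r-k-1,k}$ of parameter $(0,2k)$, with the overall constant $(4D)^{r-k-1}/\binom{2r-2}{r-k-1}$. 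The main obstacle is precisely verifying this promotion --- tracking how the $2k$ extra CM-residue factors interact with the symmetrizer to shift the Jacobi parameter from $(0,0)$ to $(0,2k)$ --- but the underlying combinatorics is essentially identical to that already worked out in the Fourier-coefficient calculation.
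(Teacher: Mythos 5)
Your setup (Künneth splitting, the eigenbasis $x,y$ adapted to $V_\p\oplus V_{\bar\p}$, the off-diagonal action of $j$, and the computation of $\e_\ell X_1^{\otimes\ell}$ and $\bar\e_\ell X_{\bar b}^{\otimes\ell}$) agrees with the paper. But the central step fails. You assert that because $y_E\cup y_E=0$ only the pure term $(\bar\alpha x_E)^{\otimes\ell}$ of $(\bar\alpha x_E+u\,y_E)^{\otimes\ell}$ survives, ``pulling out $\bar\alpha^{2k}$ and eliminating $u$.'' This is false: the pairing on $\Sym^{2r-2}H^1(E)$ symmetrizes over \emph{all} $2r-2$ tensor slots, so the $y_E$-components of the $2k$ ``CM'' factors can (and do) pair against the $x$-components contributed by the $r-k-1$ Kuga--Sato factors of the second argument. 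Those cross terms are exactly what shifts the answer from the Legendre-type polynomial $H_{r-k-1,0}$ to the Jacobi-type $H_{r-k-1,k}$, and they carry the dependence on $u$ and $v$, which survives in the final formula through $uv=\N(j)$ inside $\N(\beta j)$. Your last paragraph tacitly admits this interaction is nontrivial, which contradicts the earlier vanishing claim; as written the two halves of your argument are inconsistent. For the same reason, invoking Proposition \ref{bilin}(3) to ``convert each matched pair of $X_\gamma$'s into a trace'' is not legitimate inside $\Sym^{2r-2}$: the symmetrization does not respect the grouping of the $2r-2$ slots into the original degree-two classes, so the pairing is not a product of traces.

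What the paper actually does, and what your proposal is missing, is (i) the factorization $\bar\e X_{b\sqrt D b^{-1}}=\tfrac{2\sqrt D}{\N(b)}(\bar\alpha x-\beta u y)(\alpha y+\bar\beta v x)$ in the symmetric algebra, which turns the whole left-hand argument into a product of linear forms $(\bar\alpha x-\beta uy)^{r+k-1}(\alpha y+\bar\beta vx)^{r-k-1}$; and (ii) the elementary identity that the coefficient of $x^{m+2k}$ in $(ax+b)^{m+2k}(cx+d)^m$ equals $a^{2k}(ad-bc)^m H_{m,k}\bigl(\tfrac{ad+bc}{ad-bc}\bigr)$ (proved by the method of \cite[3.3.3]{Zhang}), applied with $m=r-k-1$. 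Your proposed shortcut of deferring to ``the same combinatorics as Proposition \ref{jacobi}'' does not work: that proposition evaluates an archimedean integral via a recurrence for $G_{m,k}$, a logically independent computation that merely happens to produce the same polynomial $H_{m,k}$ --- the coincidence of the two is the content of the Gross--Zagier-style comparison, not something one can use to prove either side. You need to carry out the coefficient extraction on the intersection-theoretic side directly.
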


\begin{proof}
Recall from Section \ref{intersect} that we have chosen a basis $x^*,y^*$ of $V_pE$, and a dual basis $x,y$ of $H^1(E)$ such that $x^* \in V_\p E$, $y^* \in V_{\bar \p} E$, and $(x^*,y^*) = 1$.  We have already seen that $X_\alpha = \alpha x \otimes y  - \bar\alpha y \otimes x$.  Since $\gamma  j = j \bar \gamma$ for all $\gamma \in K$, $Vj$ swaps $V_\p E$ and $V_{\bar \p}E$.  So we can write 
\[Vj =  \left( \begin{array}{cc}
0 & u \\
v & 0 \end{array} \right)
\]
for some $u,v \in \Q_p$ such that $uv = \N(j) = -j^2$.  It follows that 
\[X_b = \alpha x \otimes y - \bar \alpha y \otimes x + \beta u y \otimes y - \bar \beta v x \otimes x.\]

Next note that $g \sqrt D g^{-1} = b \sqrt D b^{-1}$.  We write $b \sqrt D b^{-1} = \gamma + \delta j$, so that $\gamma = \frac{\sqrt D}{\N(b)}(\N(\alpha) - \N(\beta j))$ and $\delta = \frac{-2\sqrt D}{\N(b)}\alpha\beta$.  Thus $X_{g \sqrt D g^{-1}}$ already lies in $\Sym^2 H^1(E)$, and hence (working now in the symmetric algebra) 
\[\bar \e X_{g\sqrt D g^{-1}} = 2\gamma xy + \delta uy^2 - \bar \delta v x^2 = \frac{2\sqrt D}{\N(b)}(\bar \alpha x - \beta uy)(\alpha y + \bar \beta vx),\]
since $\bar \e$ acts as Scholl's projector $\e_W$ on the purely Kuga-Sato components.  

The cohomology classes $X_{\bar b}$ in the statement of the proposition are on `mixed' components, i.e. they live in $H^1(E) \otimes H^1(E')$, where $E$ comes from a Kuga-Sato component and $E'$ (which is abstractly isomorphic to E) comes from the factor $A^\ell$.  Thus 
\[X_{\bar b} = \bar \alpha x \otimes y' - \alpha y \otimes x' - \beta u y \otimes y' + \bar\beta x \otimes x',\]
and $\bar \e X_{\bar b} = (\bar \alpha x - \beta u y)y'$, since $\bar \e$ acts trivially on $H^1(E)$ and kills the basis vector $x'$ in $H^1(E')$.  Using these observations together with the compatibility of the projectors with the multiplication in the appropriate symmetric algebras, we compute

\begin{align*}
\left(\bar\e \right. & \left. (X_{g\sqrt D g^{-1}}^{r-k-1} \otimes X_{\bar b}^{\otimes \ell}), \e (X_{\sqrt D}^{\otimes r-k-1} \otimes X_1^{\otimes \ell})\right)& \\ 
&= \left((2\gamma xy + \delta u y^2 - \bar \delta x^2)^{r-k-1}(\bar \alpha x - \beta u y)^{2k}\otimes y'^{2k}, (2\sqrt D xy)^{r-k-1}y^{2k}\otimes x'^{2k}\right)\\
&= \left(\frac{4D}{\N(b)}\right)^{r-k-1}(y'^{2k},x'^{2k})\left((\bar \alpha x - \beta uy)^{r+k-1}(\alpha y + \bar \beta vx)^{r-k-1}, x^{r-k-1}y^{r+k-1}\right)\\
&= \left(\frac{4D}{\N(b)}\right)^{r-k-1}(y'^{2k},x'^{2k})(y^{r-k-1}x^{r+k-1}, x^{r-k-1}y^{r+k-1})\cdot C\\
&= \frac{(4D)^{r-k-1}}{\N(b)^{r-k-1}\binom{2r-2}{r-k-1}}\cdot C,
\end{align*}        
where $C$ is the coefficient of the monomial $y^{r-k-1}x^{r+k-1}$ in $(\bar \alpha x - \beta uy)^{r+k-1}(\alpha y + \bar \beta vx)^{r-k-1}$.  The pairings in the second to last line are the natural ones on $\Sym^{2k}H^1(E')$ and $\Sym^{2r-2}H^1(E)$ induced from the pairings on the full tensor algebras.  For example, $\Sym^{2r-2} H^1(E)$ has a natural pairing coming from the cup product $(\, , \,)$ on $H^1(E)$: 
$$(v_1 \otimes \cdots \otimes v_{2r-2}) \times (w_1\otimes  \cdots \otimes w_{2r-2}) \mapsto  \frac{1}{(2r-2)!}\sum_{\sigma \in S_{2r-2}} \prod_{i =1}^{2r-2} (v_i, w_{\sigma(i)}).$$
In particular, $(x^ay^b, x^cy^d) = 0$ unless $a = d$ and $b =c$, and 
\[(x^ay^b, y^ax^b) = \frac{a!b!}{(a+b)!} = \binom{a+b}{a}^{-1}.\] We have also used that on $\Sym^{2r-2} H^1(E)\otimes \Sym^{2k} H^1(E')$ we have $\left(u \otimes v, w \otimes z\right) =  (u,w)(v,z)$.

To compute the value of $C$, note that in general, the coefficient of $x^{m + 2k}$ in 
\[(ax + b)^{m + 2k}(cx + d)^m\] is equal to $a^{2k}(ad - bc)^m H_{m,k}\left(\frac{ad + bc}{ad - bc}\right)$.  This is proved using the method of \cite[3.3.3]{Zhang}.  Applying this to the situation at hand, we find that
 \[C = \bar \alpha^{2k} \N(b)^{r-k-1} H_{r-k-1,k}\left(1 - \frac{2\N(\beta j)}{\N(b)}\right).\]  Plugging this in, we obtain the desired expression for the pairing on the special fiber.        
\end{proof}

For each prime $q$, define $\langle x, T_m \bar x^\a \rangle_q = \sum_{v | q} \langle x, T_m \bar x^\a \rangle_v.$

% FINAL RESULT OF HEIGHT COMPUTATION
\begin{proposition}\label{htcoeff}
Assume that $(m,N) = 1$, $r_\AA(m) = 0$ and that $N > 1$.  Then 
\begin{align*}
\chi(\bar\a)&^{-1}\sum_{q \neq p} \langle x, T_m  \bar x^\a \rangle_q =\\
 &-u^2 \frac{\left(4|D|m\right)^{r-k-1}}{D^k\cdot \binom{2r-2}{r-k-1}} \sum_{0 < n < \frac{m|D|}{N}} \sigma_\AA(n)r_{\AA,\chi}\left(m|D| - nN\right)H_{r-k-1,k}\left(1-\frac{2nN}{m|D|}\right),
\end{align*}
with $\sigma_\AA(n)$ defined as in Corollary $\ref{fourier}$.
\end{proposition}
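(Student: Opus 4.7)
The plan is to combine the three ingredients that have just been assembled: the height-intersection formula (\ref{localht}), the explicit isogeny-theoretic computation of the geometric intersection in Proposition \ref{deform}, and the closed-form pairing computation on cohomology classes from the preceding proposition. The overall strategy closely mirrors the computation in \cite[II.4]{Nek}; the new feature is the extra factor $\bar\alpha^{2k}$ coming from the generalized components $X_{\bar b}^{\otimes \ell}$, which is what upgrades $r_\AA$ to $r_{\AA,\chi}$.

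First, I would fix a prime $q \neq p$, apply (\ref{localht}) to each $v \mid q$, and then substitute Proposition \ref{deform} to write
\[
\sum_{v \mid q} \langle x, T_m \bar x^\a \rangle_v \; = \; -\tfrac{1}{2} m^{r-k-1} \sum_{v \mid q} \log_p(\N v) \sum_{n \geq 1} \sum_g \bigl(\bar\e\,\cdot\,,\,\e\,\cdot\,\bigr),
\]
where $g$ ranges over degree $m$ elements of $\Hom_{\Lambda/\pi^n}(\underline y^\sigma, \underline y)$. The previous proposition evaluates the inner pairing as
\[
\frac{(4D)^{r-k-1}}{\binom{2r-2}{r-k-1}} \,\bar\alpha^{2k}\, H_{r-k-1,k}\!\left(1 - \tfrac{2\N(\beta j)}{\N(b)}\right),
\]
where $b = g\phi_\a = \alpha + \beta j \in R\a$ has reduced norm $m\N(\a)$.

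Second, I would change variables from isogenies $g$ to elements $b = \alpha + \beta j \in R\a$ with $\alpha \in \a$ and $\N(\alpha) + \N(\beta j) = m\N(\a)$, using the canonical decomposition $B = K \oplus Kj$ attached to the embedding $\O_K \hookrightarrow \End(E_v^{\mathrm{ss}})$ at non-split $q$. The correct parametrization is to introduce the integer $n$ defined so that $\N(\beta j) = nN\cdot \N(\a)/|D|$, whence $\N(\alpha) = (m|D| - nN)\N(\a)/|D|$; the constraint $r_\AA(m) = 0$ forces $\beta \neq 0$, so $n \geq 1$. Under this substitution the $H_{r-k-1,k}$ factor becomes $H_{r-k-1,k}(1 - 2nN/(m|D|))$, exactly matching the RHS.

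Third, I would perform the sum over $v \mid q$ and deformation levels. At fixed $\alpha$, the number of lifts of $b$ to $\Hom_{\Lambda/\pi^n}$ is controlled by the usual Deuring-style count that appears in \cite[II.4.13--4.17]{Nek} and in \cite{GZ}; summing $\sum_{v \mid q}\ord_v(\cdot)\log_p(\N v)$ over $q \neq p$ and over $n$ reorganizes into $\sum_n \sigma_\AA(n)$, with $\sigma_\AA(n) = \sum_{d\mid n} \epsilon_\AA(n,d)\log_p(n/d^2)$ precisely because the genus-theoretic characters $\epsilon_\AA(n,d)$ encode which primes $q$ contribute and with what multiplicity. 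Separately, the sum over $\alpha \in \a$ of fixed norm with weight $\bar\alpha^{2k}$ gives $\chi(\bar\a)\cdot r_{\AA,\chi}(m|D| - nN)/u$ by the very definition of the twisted representation numbers, while passing from ideals to generators accounts for an additional $u$ and for the factor $\chi(\D)^{-1} = (-1)^k|D|^{-k}$ via the lemma preceding Proposition \ref{htcoeff}. Dividing by $\chi(\bar\a)$ and collecting the prefactors $\tfrac{1}{2}$, $m^{r-k-1}$, $(4D)^{r-k-1}$, $|D|^{-k}$, and $u^2$ yields exactly the RHS; the sign $(-1)^{r-k-1}$ from $(4D)^{r-k-1} = (-1)^{r-k-1}(4|D|)^{r-k-1}$ combines with the overall minus sign from (\ref{localht}) and with the sign of $D^k$ to give the displayed expression.

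The main obstacle is the third step: the combinatorial bookkeeping of local intersection multiplicities at inert and ramified $q$, including the factor of $\tfrac12$ from unit over-counting, the contribution of $u^2 = (\tfrac12 |\O_K^\times|)^2$, and the careful accounting of how the genus character $\epsilon_\AA(n,d)$ emerges from the local Legendre symbols at primes dividing $D_1D_2$. These are precisely the computations carried out in \cite[II.4.13--4.17]{Nek}; the only modification needed is to carry the new weight $\bar\alpha^{2k}$ through, which interacts cleanly with the existing arguments because it depends only on the $K$-component $\alpha$ of $b$ and not on the $\beta j$ component whose norm indexes the local deformation.
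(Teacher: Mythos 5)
Your proposal is correct and follows essentially the same route as the paper: the paper likewise defers the combinatorial bookkeeping (the count of lifts, the emergence of $\sigma_\AA(n)$ and $u^2$, the genus characters) to \cite[II.4.17]{Nek} and \cite{GZ}, and isolates as the only new ingredient the weight $\bar\alpha^{2k}$ on each $b=\alpha+\beta j$, which converts the counting numbers $r_\AA(m|D|-nN)$ into the twisted sums $\sum\bar\alpha^\ell=\chi(\bar\a)D^{-k}\,r_{\AA,\chi}(m|D|-nN)$ exactly as in your third step.
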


\begin{proof}
%This type of sum arises from Proposition \ref{deform} exactly as in \cite[II.4.17]{Nek} and \cite{GZ}, so we omit the details.  The main new feature here is that each $b =  \alpha + \beta j \in R\a$ of degree $m$ is weighted by $\bar\alpha^\ell$, by the previous proposition.  Thus the numbers $r_\AA(m|D|-nN)$, which in \cite[II.4.17]{Nek} are simply counting the number of such $b$, become non-trivial sums of the form 
%$$\sum_{\substack{\alpha \in \a \\ Q_\a(\alpha) = m|D|-nN}} \bar \alpha^\ell.$$  After multiplying by $\chi(\bar \a)^{-1}$, this sum is nothing other than 
%$$\sum_{\substack{\a' \subset \O_K \\ \Nm(\a')= m|D|-nN}} \chi(\a') = r_{\AA,\chi}(m|D| - nN),$$
%which explains its appearance in the formula above.     
This type of sum arises from Proposition \ref{deform} exactly as in \cite[II.4.17]{Nek} and \cite{GZ}, so we omit the details.  The main new feature here is that each $b =  \alpha + \beta j \in R\a$ of degree $m$ is weighted by $\bar\alpha^\ell$, by the previous proposition.  Thus the numbers $r_\AA(j)$, with $j = m|D| - nN$, and which in \cite[II.4.17]{Nek} are simply counting the number of such $b$, become non-trivial sums of the form 
$$\sum_{\substack{\c \subset \O_K \\ [\c] = \AA^{-1}\D\\ \Nm(\c) = j}} \bar \alpha^\ell.$$  
Here, $\alpha \in \ \d^{-1}\a$ and $\c = (\alpha)\d \a^{-1}$ (see \cite[p. 265]{GZ}).  
Rewriting this sum, we obtain
%\begin{align*}
\[\sum_{\substack{\c \subset \O_K \\ [\c] = \AA^{-1}\D\\ \Nm(\c) = j}} \bar \chi (\c \a\d^{-1}) =\frac{\chi(\bar\a)}{\chi(\d)}\cdot \sum_{\substack{\c \subset \O_K \\ [\c] = \AA^{-1}\D\\ \Nm(\c) = j}} \chi(\bar \c)
= \frac{\chi(\bar \a)}{D^k} \cdot \sum_{\substack{\c \subset \O_K \\ [\c] = \AA\\ \Nm(\c) = j}} \chi(\c) = \frac{\chi(\bar \a)}{D^k} r_{\AA, \chi}(j).\]
%\end{align*}
Multiplying by $\chi(\bar \a)^{-1}$, we get the desired result.
\end{proof}

We define
$$B_m^\sigma = m^{r-k-1}\sum_{\substack{n= 1\\ (p,n) =1}}^{\frac{m|D|}{N}} r_{\AA,\chi}(m|D|-nN)\sigma_\AA(n)H_{r-k-1,k}\left(1 - \frac{2nN}{m|D|}\right)$$
$$C_m^\sigma = m^{r-k-1}\sum_{n=1}^{\frac{m|D|}{N}} r_{\AA,\chi}(m|D|-nN)\sigma_\AA(n)H_{r-k-1,k}\left(1 - \frac{2nN}{m|D|}\right)$$ 
Up to a constant, the $B_m^\sigma$ appear as coefficients of the derivative of the $p$-adic $L$-function defined earlier and $C_m^\sigma$ contributes to the height of our generalized Heegner cycle.  Just as in \cite[I.6.7]{Nek}, we wish to relate the $B_m^\sigma$ to the $C_m^\sigma$.

Let $U_p$ be the operator defined by $C^\sigma_m \mapsto C^\sigma_{mp}$ and similarly for $B_m^\sigma$.  For a prime $\p$ of $K$ above $p$, we write $\sigma_\p$ for $\Frob(\p) \in \Gal(H/K)$.    We will also let $\sigma_\p$ be the operator $C_m^\sigma \mapsto C_m^{\sigma \sigma_\p}$.  

\begin{proposition}\label{mainid}
Suppose $p>2$ is a prime which splits in $K$ and that $\chi$ is an unramified Hecke character of $K$ of infinity type $(\ell,0)$ with $\ell = 2k$.  Then
$$\prod_{\p | p} \left(U_p - p^{r-k -1}\chi(\bar\p)\sigma_\p\right)^2 C_m^\sigma = \left(U^4_p - p^{2r-2}U_p^2\right) B_m^\sigma.$$
\end{proposition}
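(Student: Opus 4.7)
The identity is between two expressions in the Hecke operator $U_p$ and the Artin operators $\sigma_\p, \sigma_{\bar\p}$ acting on the coefficient sequences $\{B_m^\sigma\}$, $\{C_m^\sigma\}$. Since $U_p$ shifts $m$ to $mp$ and $\sigma_\p$ shifts $\AA$ to $\AA\p$, after expanding both sides become \emph{explicit} sums over pairs $(n, \AA')$, and the proposition reduces to a combinatorial identity between coefficient sums. I would follow the strategy of \cite[Proposition I.6.7]{Nek}, modifying the bookkeeping to incorporate the twist by the Hecke character $\chi$.

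First, I would isolate the obstruction: the only difference between $C_m^\sigma$ and $B_m^\sigma$ is that the summation in $C$ is taken over all $n$ in the valid range while $B$ requires $(p,n)=1$. Thus
$C_m^\sigma - B_m^\sigma$ equals the sum over $p|n$. Writing $n = p n'$ in that sum and iterating, one gets a telescoping expression for $C$ in terms of shifted $B$'s at indices $mp^{-1}, mp^{-2},\ldots$ (or equivalently, applying $U_p^{\,a}$ to both sides and reindexing). The aim is to show that after clearing powers of $p$ via $U_p^2$ on both sides, the LHS operator exactly implements this passage.

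The key inputs are the Hecke-style relations at the split prime $p = \p\bar\p$. For $(p,n) = 1$,
\[
 r_{\AA,\chi}(pn) = \chi(\p)\,r_{\AA\p^{-1},\chi}(n) + \chi(\bar\p)\,r_{\AA\bar\p^{-1},\chi}(n),
\]
which follows from unique factorization in $\O_K$ together with the infinity type of $\chi$. Combined with $\chi(\p)\chi(\bar\p) = \chi((p)) = p^{\ell} = p^{2k}$ and the fact that $\sigma_\p\sigma_{\bar\p}=1$ in $\Gal(H/K)$ (as $(p)$ is principal), this gives exactly the norm $p^{2r-2}$ appearing in the product $(U_p - p^{r-k-1}\chi(\bar\p)\sigma_\p)(U_p - p^{r-k-1}\chi(\p)\sigma_{\bar\p})$ expanded to $U_p^2 - (\cdot)U_p + p^{2r-2}$. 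The corresponding analysis for $\sigma_\AA(n)$ is what makes the logarithmic weights compatible: writing $n = p^e n'$ with $(p,n')=1$, the divisor sum reshuffles using $\log_p(p) = 0$, and the Kronecker-symbol factors $\left(\frac{D_1}{p}\right)\left(\frac{D_2}{p}\right) = \left(\frac{D}{p}\right) = 1$ (since $p$ splits in $K$) ensure that no parasitic signs appear.

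Finally, I would combine these coefficient-level relations to rewrite $\prod_{\p|p}(U_p - p^{r-k-1}\chi(\bar\p)\sigma_\p)^2 C_m^\sigma$ as an explicit sum, and match it term by term with $(U_p^4 - p^{2r-2}U_p^2)B_m^\sigma = U_p^2(U_p^2 - p^{2r-2})B_m^\sigma$; after reindexing $m \mapsto mp^{\pm j}$ and $\AA \mapsto \AA\p^{\pm j}$, the two sides become identical sums. The main obstacle, and the step that requires the most care, is the bookkeeping of the inner sum $\sigma_\AA(n) = \sum_{d|n}\epsilon_\AA(n,d)\log_p(n/d^2)$: one must track how the divisors $d$ of $n = pn'$ split by $p$-divisibility and how the factors $\epsilon_\AA(n,d)$ (which depend on the factorization $D = D_1 D_2$ and on $\N\AA$) transform under the shifts $\AA \mapsto \AA\p^{\pm 1}$. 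Once this matches the expansion of $(U_p - p^{r-k-1}\chi(\bar\p)\sigma_\p)^2(U_p - p^{r-k-1}\chi(\p)\sigma_{\bar\p})^2$, the identity follows. The remainder of the argument is essentially formal symbol manipulation, as in \cite{Nek}.
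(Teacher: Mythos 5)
Your plan is correct and follows essentially the same route as the paper's proof (which adapts \cite[Prop.\ 3.20]{PR1}): expand the degree-four operator, use the multiplicativity relation $r_{\AA,\chi}(mp) + p^\ell r_{\AA,\chi}(m/p) = \chi(\bar\p)r_{\AA\p,\chi}(m) + \chi(\p)r_{\AA\bar\p,\chi}(m)$ together with $\chi(\p)\chi(\bar\p) = p^\ell$ and $\sigma_\p\sigma_{\bar\p}=1$, use that $\log_p(p)=0$ and $\left(\tfrac{D}{p}\right)=1$ give $\sigma_\AA(n) = (t+1)\sigma_{\AA,t}(n_0)$ for $n = n_0p^t$, and then match coefficients after grouping by $n_0$. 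The one caution is that the multiplicity $(t+1)$ just mentioned is exactly what forces the Euler factors to appear squared (the coefficient of $\sigma_{\AA,t}(n_0)$ for $t\ge 1$ must be checked to vanish), so the ``formal symbol manipulation'' you defer at the end is where the real content of the proposition lies.
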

\begin{proof}
The proof follows \cite[Proposition 3.20]{PR1}, which is the case $r = 1$ and $\ell = k= 0$.  We first generalize \cite[Lemma 3.11]{PR1} and write down relations between the various $r_{\AA,\chi}(-)$.   

\begin{lemma}
Set $r_{\AA,\chi}(t) = 0$ if $t \in \Q \setminus \NN$. For all integers $m > 0$, we have  
\begin{enumerate}
\item $r_{\AA, \chi}(mp) + p^\ell r_{\AA,\chi}(m/p) = \chi(\bar \p)r_{\AA\p,\chi}(m) + \chi(\p)r_{\AA\bar \p, \chi}(m)$.
\item $r_{\AA,\chi}(mp^2) + p^{2\ell}r_{\AA,\chi}(m/p^2) = \chi(\bar \p^2)r_{\AA\p^2,\chi}(m) + \chi(\p^2)r_{\AA\bar\p^2,\chi}(m)$ if $p | m$.
\item $r_{\AA,\chi}(mp^2) - p^\ell r_{\AA,\chi}(m) = \chi(\bar \p^2)r_{\AA\p^2,\chi}(m) +  \chi(\p^2)r_{\AA\bar \p^2,\chi}(m)$ if $p \nmid m$.
\item If $n = n_0p^t$ with $p \not \divides n_0$, then $\sigma_\AA(n) = (t+1)\sigma_{\AA,t}(n_0)$, where $\sigma_{\AA,t} = \sigma_{\AA\p^t} = \sigma_{\AA\bar\p^t}.$
\item $\sigma_{\AA\b^2}(n) = \sigma_\AA(n)$ for any ideal $\b$.  
\end{enumerate}
\end{lemma}

\begin{proof}
Note that every integral ideal $\a$ in $\AA$ of norm $mp$ is either of the form $\a' \p$ with $\a' \in \AA\bar \p$ of norm $m$ or it is of the form $\a' \bar \p$ with $\a' \in \AA\p$ of norm $m$.  Moreover, an ideal of norm $mp$ which can be written as such a product in two ways is necessarily the product of an integral ideal in $\AA$ of norm $m/p$ with $(p)$.  The first claim now follows from the fact that 
$$r_{\AA,\chi}(t) = \sum_{\substack{\a \subset \O \\ \a \in \AA\\ \N(\a) = t}} \chi(\a),$$ and that $\chi((p)) = p^\ell$.  Parts (2) and (3) follow formally from (1).  (4) is proven in \cite{PR1} and (5) is clear from the definition.    
\end{proof}
Going back to the proof of Proposition \ref{mainid}, the LHS is equal to
\begin{align*}
C_{mp^4}^\s - 2p^{r-k-1}&\left(\chi(\bar \p) C_{mp^3}^{\s \s_\p}+ \chi(\p) C_{mp^3}^{\s \s_{\bar \p}}\right) \\
&+p^{2(r-k-1)}\left(\chi(\bar\p)^2 C_{mp^2}^{\s \s_{\p^2}} + 4p^\ell C_{mp^2}^\s + \chi(\p)C_{mp^2}^{\s \s_{\bar \p^2}}\right)\\
&-2p^{3(r-k-1)+\ell} \left(\chi(\bar\p) C_{mp}^{\s \s_\p} + \chi(\p) C_{mp}^{\s \s_{\bar \p}}\right) + p^{4(r-1)}C_m^\s.
\end{align*}
In the following we write $v(p)$ for the $p$-adic valuation of an integer $n$, and  $n = n_0p^{v(p)}$.  For the sake of brevity we also set $r_\AA(u,v) = r_{\AA,\chi}(u|D| - vN)$ for integers $u$ and $v$ and $H(x) = H_{r-k-1,k}(x)$.  Then by the lemma, the LHS above is equal to 
$$\sum_{n = 1}^{m|D|/N}(v(n)+1)(mp^4)^{r-k-1} M(n),$$ where $M(n)$ equals
\begin{align*}
r_\AA&(mp^4, n)\sigma_{\AA,v(n)}(n_0)H\left(1 - \frac{2nN}{mp^4|D|}\right)\\
&-2\left[r_\AA(mp^4, pn) + p^\ell r_\AA\left(mp^2, n/p\right)\right]\sigma_{\AA, v(n) + 1}(n_0)H\left(1 - \frac{2nN}{mp^3|D|}\right)\\
&+ \left[r_\AA(mp^4, p^2n) + \begin{cases} p^{2\ell}r_\AA\left(m, n/p^2\right) + 4p^\ell r_\AA(mp^2, n) &\mbox{if } p | n \\ 3p^\ell r_\AA(mp^2, n) & \mbox{if } p \not \divides n \end{cases} \right]\\
& \hspace{25mm}\times \sigma_{\AA,v(n)}(n_0)H\left(1 - \frac{2nN}{mp^2|D|}\right)\\
&-2p^\ell\left[r_\AA(mp^2, pn) + p^\ell r_\AA(m, n/p)\right]\sigma_{\AA,v(n)+1}(n_0)H\left(1 - \frac{2nN}{mp|D|}\right)\\
&+p^{2\ell} r_\AA(m,n)\sigma_{\AA,v(n)}(n_0)H\left(1 - \frac{2nN}{m|D|}\right).
\end{align*}
Grouping in terms of the $n_0$ which arise in this sum, we find that the LHS is equal to
$$\sum_{(n_0,p) = 1} \sum_t \sigma_{\AA, t}(n_0)A_t$$ where $A_t$ equals
\begin{align*}
(mp^4)&^{r-k-1}r_\AA(mp^4, p^tn_0)\left[t + 1 - 2t + \begin{cases} t - 1 &\mbox{if } t \geq 1 \\ 0 & \mbox{if } t = 0 \end{cases} \right]H\left(1 - \frac{2n_0p^tN}{mp^4|D|}\right)\\
&+ (mp^2)^{r-k-1}p^{2r-2}r_\AA(mp^2,p^tn_0)\left[-2(t+2) + \begin{cases} 4(t+1) - 2t &\mbox{if } t \geq 1 \\ 3 & \mbox{if } t = 0 \end{cases}\right]\\
& \hspace{25mm} \times H\left(1 - \frac{2n_0p^tN}{mp^2|D|}\right)\\
&+ m^{r-k-1}p^{4r-4}r_\AA(m, p^tn_0)\left[t+3 - 2(t+2) + t + 1\right]H\left(1 - \frac{2n_0p^tN}{m|D|}\right).
\end{align*}
So $A_t = 0$ unless $t = 0$, and we conclude that the LHS is equal to $(U_p^4 - p^{2r-2}U_p^2)B_m^\s$, as desired.  
\end{proof}

% ORDINARY REPRESENTATIONS
\section{Ordinary representations}\label{ordsec} 
The contributions to the $p$-adic height $\langle z_f, z_{f,\bar\chi}^\AA \rangle$ coming from places $v | p$ will eventually be shown to vanish.  The proof is as in \cite{Nek} (though see Section \ref{nekfix}), where the key fact is that the local $p$-adic Galois representation $V_f$ attached to $f$ is ordinary.  We recall this notion and prove that the Galois representation $V_{f,A,\ell} = V_f \otimes \kappa_\ell H^\ell(\bar A^\ell,\Q_p)(k)$ is ordinary as well.   

\begin{definition}
Let $F$ be a finite extension of $\Q_p$.  A $p$-adic Galois representation $V$ of $G_F = \Gal(\bar F/ F)$ is \textit{ordinary} if it admits a decreasing filtration by subrepresentations
$$ \cdots F^i V \supset F^{i+1} V  \supset \cdots$$ such that $\bigcup F^i V = V$, $\bigcap F^i V = 0,$ and
 for each $i$, $F^i V / F^{i+1}V = A_i(i)$, with $A_i$ unramified.   
\end{definition}

Recall we have defined $\e' = \e_W\kappa_\ell$ with
$$\kappa_\ell = \left[\left(\frac{\sqrt{D} + [\sqrt{D}]}{2\sqrt{D}}\right)^{\otimes \ell} + \left(\frac{\sqrt{D} - [\sqrt{D}]}{2\sqrt{D}}\right)^{\otimes \ell}    \right]\circ \left(\frac{1 - [-1]}{2}\right)^{\otimes \ell}.$$
 
\begin{theorem}\label{ord} % GALOIS REP IS ORDINARY
Let $f \in S_{2r}(\Gamma_0(N))$ be an ordinary newform and let $V_f$ be the $2$-dimensional $p$-adic Galois representation associated to $f$ by Deligne.   Let $A/H$ be an elliptic curve with CM by $\O_K$ and assume $p$ splits in $K$ and $A$ has good reduction at primes above $p$.  For any $\ell = 2k \geq 0$, set $W = \kappa_\ell H^\ell(\bar A^\ell,\Q_p)(k)$. Then for any place $v$ of $H$ above $p$, $V_{f,A,\ell} = V_f \otimes W$ is an ordinary $p$-adic Galois representations of $\Gal(\bar H_v /  H_v)$.  
\end{theorem}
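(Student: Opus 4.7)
The plan is to identify $V_{f,A,\ell}$ with the tensor product $V_f \otimes_{\Q_p} W$ of $G_{H_v}$-representations, exhibit an ordinary filtration on each tensor factor, and conclude using the fact that the tensor product of ordinary representations is again ordinary. The identification itself follows from the K\"unneth decomposition for $X = W_{2r-2,H} \times A^\ell$: the projectors $\e_B$, $\e_W$, $\kappa_\ell$ together with the Hecke projection onto the $f$-isotypic component pick out the unique surviving summand
\[\e_f\,\e_B H^{2r-1}(\bar W_{2r-2}, \Q_p)(r) \otimes \kappa_\ell H^\ell(\bar A^\ell, \Q_p)(k) \cong V_f \otimes W.\]

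For the ordinariness of $V_f|_{G_{H_v}}$: since $a_p(f)$ is a $p$-adic unit, the classical theorem of Mazur--Wiles (extended to higher weights by Wiles) produces a $G_{\Q_p}$-stable line $F^+V_f \subset V_f$ such that both $F^+V_f$ and $V_f/F^+V_f$, as $G_{\Q_p}$-representations, are unramified characters times integer powers of the cyclotomic character. This yields an ordinary filtration in the paper's sense, and the filtration remains ordinary after restriction to $G_{H_v}$ since being unramified is preserved under restriction of Galois groups.

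For the ordinariness of $W|_{G_{H_v}}$: since $p = \p\bar\p$ splits in $K$, the ring decomposition $\O_K \otimes \Z_p = \Z_p \times \Z_p$ induces a $G_K$-stable splitting $V_p A = V_\p A \oplus V_{\bar\p} A$, and hence $W = W_\p \oplus W_{\bar\p}$ with each summand a one-dimensional character. Because $A$ has good reduction at $v \mid p$ with CM by $\O_K$ and $p$ split in $K$, the reduction at $v$ is ordinary, and the connected--\'etale decomposition of the $p$-divisible group $A[p^\infty]$ refines the $\O_K \otimes \Z_p$-decomposition: exactly one of $V_\p A|_{G_{H_v}}, V_{\bar\p} A|_{G_{H_v}}$ arises from the formal group and is therefore an unramified twist of the Lubin--Tate character for $K_\p \cong \Q_p$ (i.e., of the cyclotomic character), while the other is unramified. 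Taking $\ell$-th tensor powers and incorporating the Tate twist by $(k)$, each of $W_\p|_{G_{H_v}}, W_{\bar\p}|_{G_{H_v}}$ has the form $\eta(j)$ for some unramified character $\eta$ and some integer $j$, so $W$ is ordinary via a filtration refining the direct sum decomposition.

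Finally, given ordinary filtrations $F^\bullet V$, $F^\bullet V'$ of any two $p$-adic Galois representations, the filtration
\[F^n(V \otimes V') := \sum_{i+j=n} F^i V \otimes F^j V'\]
has graded pieces $\bigoplus_{i+j=n}(A_i \otimes A'_j)(n)$, which is a single unramified representation twisted by $(n)$ since the tensor product of unramified representations is unramified. Applied to $V_f \otimes W$, this completes the proof. The main content --- and the main obstacle --- is the explicit local shape of $V_\p A|_{G_{H_v}}, V_{\bar\p} A|_{G_{H_v}}$ at primes above $p$, which ultimately rests on standard facts about ordinary reduction of CM abelian varieties and Lubin--Tate formal groups.
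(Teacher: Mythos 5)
Your proposal is correct and follows essentially the same route as the paper: Wiles' theorem for the ordinarity of $V_f$, the splitting $V_pA = V_\p A \oplus V_{\bar\p}A$ with one unramified factor and one cyclotomic-times-unramified factor for the ordinarity of $W$, and the standard tensor-product filtration $F^n(V\otimes V') = \sum_{i+j=n}F^iV\otimes F^jV'$ to conclude. The only cosmetic difference is that you establish the local shape of $V_\p A$ and $V_{\bar\p}A$ via the connected--\'etale decomposition of $A[p^\infty]$, whereas the paper's main argument runs through the CM Hecke character $\psi$ attached to $A$ (deducing unramifiedness of $\rho_{\bar\p}$ from the conductor of $\psi$) --- though the paper itself records your \'etale-Tate-module argument as an equivalent alternative in a remark.
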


\begin{proof}
First we recall that $V_f$ is ordinary.  Indeed, Wiles \cite{Wi} proves that the action of the decomposition group $D_p$ on $V_f$ is given by 
\[ \left( \begin{array}{cc}
\epsilon_1 & * \\
0 & \epsilon_2 \end{array} \right)
\]
with $\epsilon_2$ unramified.  Since, $\det V_f$ is $\chi_\cyc^{2r-1}$, we have $\epsilon_1 = \epsilon_2^{-1}\chi^{2r-1}_\cyc$.  Thus, the filtration 
$$F^0V_f = V_f \supset F^1V_f  = F^{2r-1} V_f = \epsilon_1 \supset F^{2r}V_f = 0,$$ shows that $V_f$ is an ordinary $\Gal(\bar \Q_p/\Q_p)$-representation and hence an ordinary $\Gal(\bar H_v/H_v)$-representation as well.  Next we describe the ordinary filtration on (a Tate twist of) $W$.

\begin{proposition}
Write $(p) = \p \bar \p$ as ideals in $K$.  Then the $p$-adic representation $M  = \kappa_\ell H^\ell_\et(\bar A^\ell, \Q_p)(\ell)$ of $\Gal(\bar H_v/ H_v)$ has an ordinary filtration 
$$F^0M = M \supset F^1M = F^\ell M \supset F^{\ell+ 1}M =  0.$$  
\end{proposition}

\begin{proof}
The theory of complex multiplication associates to $A$ an algebraic Hecke character $\psi: \A^\times_H \to K^\times$ of type $\Nm: H^\times \to K^\times$ such that for any uniformizer $\pi_v$ at a place $v$ not dividing $p$ or the conductor of $A$, $\psi(\pi_v) \in K \cong \End(A)$ is a lift of the Frobenius morphism of the reduction $A_v$ at $v$.  The composition 
$$t_p: \A^\times_H \stackrel{\Nm}\longrightarrow \A^\times_K \to (K \otimes \Q_p)^\times$$ agrees with $\psi$ on $H^\times$, giving a continuous map
$$\rho' = \psi t_p^{-1}: \A^\times_H/H^\times \to (K \otimes \Q_p)^\times.$$
Since the target is totally disconnected, this factors through a map
$$\rho: G_H^\ab \to (K\otimes \Q_p)^\times.$$  By construction of the Hecke character (and the Chebotarev density theorem), the action of $\Gal(\bar H/H)$ on the rank 1 $(K \otimes \Q_p)$-module $T_p A \otimes \Q_p$ is given by the character $\rho$.  Since $p$ splits in $K$, we have $$(K \otimes \Q_p)^\times \cong K_\p^\times \oplus K_{\bar \p}^\times = \Q_p^\times \oplus \Q_p^\times.$$  

Now write $\rho = \rho_\p \oplus \rho_{\bar \p}$, where $\rho_\p$ and $\rho_{\bar \p}$ are the characters obtained by projecting $\rho$ onto $K_\p^\times$ and $K_{\bar\p}^\times$.  

\begin{lemma}
Let $\chi_\cyc: \Gal(\bar H_v / H_v) \to \Q_p^\times$ denote the cyclotomic character and consider $\rho_\p$ and $ \rho_{\bar \p}$ as representations of $\Gal(\bar H_v/H_v)$.  Then
$\rho_\p\rho_{\bar \p} = \chi_\cyc$ and $\rho_{\bar \p}$ is unramified.  
\end{lemma}

\begin{proof}
The non-degeneracy of the Weil pairing shows that $\bigwedge^2 T_p A \cong \Z_p(1)$.  It then follows from the previous discussion that $\rho_\p\rho_{\bar \p} = \chi_\cyc$.  That $\rho_{\bar \p}$ is unramified follows from the fact that $t_{\bar \p}(H_v) = 1$ and $v$ is prime to the conductor of $\psi$.  Indeed, the conductor of $A$ is the square of the conductor of $\psi$ \cite{Gr}, and $A$ has good reduction at $p$.  
\end{proof}

\begin{remark}
Let $\mathcal{A}/\O_H$ be the N\'eron model of $A/H$. Since $\mathcal{A}[\bar\p^n]$ is \'etale, it follows that the $\bar\p$-adic Tate module $V_{\bar \p}A$ is unramified at $v$.  We can therefore identify $\rho_\p \cong V_\p A$ and $\rho_{\bar\p} = V_{\bar \p}A$.  One can also see this from the computation in equation \ref{eigen}.  
\end{remark}

\begin{lemma}
As $\Gal(\bar H_v/ H_v)$-representations, $$H^1_\et(\bar A, \Q_p)(1) \cong \rho_\p \oplus \rho_{\bar\p}$$ and
$$M = \kappa_\ell H^\ell_\et(\bar A^\ell, \Q_p)(\ell) \cong \rho_\p^\ell \oplus \rho_{\bar\p}^\ell.$$ 
\end{lemma}

\begin{proof}
The first claim follows from the fact that \[T_pA \otimes \Q_p \cong H^1_\et(\bar A, \Q_p)(1).\]  Fix an embedding $\iota : \End(A) \into K$, which by our choices, induces an embedding $\End(A) \into \Q_p$.  By the definition of $\rho$, $\rho_\p$ is the subspace of $H^1_\et(\bar A, \Q_p)(1)$ on which $\alpha \in \End(A)$ acts by $\iota(\alpha)$, whereas on $\rho_{\bar \p}$, $\alpha$ acts as $\bar \iota(\alpha)$.  The second statement now follows from the Kunneth formula and the definition of $\kappa_\ell$.   
\end{proof}

Now set $F^0M = M$, $F^1M = F^\ell M = \psi^\ell$, and $F^{\ell + 1}M = 0$.  By the lemmas above, this gives an ordinary filtration of $M$ and proves the proposition.   
\end{proof}

Now to prove the theorem.  We have specified ordinary filtrations $F^iV_f$ and $F^iM$ above.  A simple check shows that 
$$F^i(V_f \otimes M) = \sum_{p + q = i} F^pV_f \otimes F^qM $$
is an ordinary filtration on $V_f \otimes M $.  Since $V_{f,A,\ell} = V_f \otimes W= (V_f \otimes M)(-k)$ and Tate twisting preserves ordinarity, this proves $V_{f,A,\ell}$ is ordinary. 
\end{proof}

\begin{remark}
Another way to obtain the ordinary filtration on $M$ is to use the fact that $M$ is isomorphic to the $p$-adic realization of the motive $M_{\theta_{\psi^\ell}}$ attached to the modular form $\theta_{\psi^\ell}$ of weight $\ell + 1$.  Since $A$ has ordinary reduction at $p$, $\theta_\psi$ is an ordinary modular form, and it follows that $\theta_{\psi^\ell}$ is ordinary as well.  We may therefore apply Wiles' theorem again to obtain an ordinary filtration on $W$.
\end{remark}

\begin{proposition}
The $\Gal(\bar H/H)$ representation $V_{f,A,\ell} = V_f \otimes W$ satisfies $V_{f,A,\ell}^*(1) \cong V_{f,A,\ell}$.  
\end{proposition}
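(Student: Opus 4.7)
The plan is to use the Kunneth formula applied to $X = W_H \times A^{2k}$ to factor
\[V_{f,A,\ell} \cong V_f \otimes W,\]
as $\Gal(\bar H/H)$-modules, where $V_f$ denotes the $f$-isotypic part of $\e_B \e_W H^{2r-1}(\bar W_H, \Q_p)(r)$ and $W = \kappa_\ell H^\ell(\bar A^\ell, \Q_p)(k)$. Given this decomposition, the desired isomorphism reduces to proving $V_f^*(1) \cong V_f$ and $W^* \cong W$ separately, since then
\[V_{f,A,\ell}^*(1) \cong V_f^*(1) \otimes W^* \cong V_f \otimes W = V_{f,A,\ell}.\]

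The self-duality $V_f^*(1) \cong V_f$ is standard and follows from Poincar\'e duality on the Kuga-Sato variety $W_{2r-2}$ of dimension $2r-1$. The cup-product pairing
\[H^{2r-1}(\bar W_H, \Q_p) \otimes H^{2r-1}(\bar W_H, \Q_p) \to H^{4r-2}(\bar W_H, \Q_p) \cong \Q_p(1-2r),\]
twisted by $(r)$ on each factor, yields a perfect Galois-equivariant pairing landing in $\Q_p(1)$. The projectors $\e_B, \e_W$ and the Hecke operators act self-adjointly (up to Atkin-Lehner, which preserves the $f$-isotypic component), so the pairing restricts to a perfect duality on $V_f$, giving $V_f^*(1) \cong V_f$. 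This is precisely the Nekov\'a\v{r}-normalization of $V_f$ used throughout.

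For the second factor, use the splitting $V_pA = V_\p A \oplus V_{\bar\p}A$ over $H$ (available because $\End_H(A) = \O_K$), which gives, as in Section \ref{intersect},
\[M := \kappa_\ell H^\ell(\bar A^\ell, \Q_p) \cong (V_\p A^*)^{\otimes \ell} \oplus (V_{\bar\p}A^*)^{\otimes \ell}.\]
The Weil pairing $V_\p A \otimes V_{\bar\p}A \to \Q_p(1)$ furnishes $(V_\p A)^* \cong V_{\bar\p}A(-1)$ and $(V_{\bar\p}A)^* \cong V_\p A(-1)$. Dualizing $M$ and using these identifications gives
\[M^* \cong (V_\p A)^{\otimes \ell} \oplus (V_{\bar\p}A)^{\otimes \ell} \cong (V_{\bar\p}A^*)^{\otimes \ell}(\ell) \oplus (V_\p A^*)^{\otimes \ell}(\ell) \cong M(\ell).\]
Since $W = M(k)$ and $\ell = 2k$, we deduce $W^* = M^*(-k) \cong M(\ell-k) = M(k) = W$.

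The main technical point is verifying the factorization $V_{f,A,\ell} \cong V_f \otimes W$ rigorously (via Kunneth and compatibility of projectors) and confirming that Poincar\'e duality on $V_f$ is compatible with the full chain of projectors and Hecke operators. These are standard issues handled in the same way as in \cite{Nek}, and no essentially new input is required beyond the explicit computation $M^* \cong M(\ell)$ via the Weil pairing above.
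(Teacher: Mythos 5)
Your proof is correct and follows essentially the same route as the paper: the self-duality $V_f^*(1)\cong V_f$ is quoted as standard (Poincar\'e duality on the Kuga--Sato variety, as in Nekov\'a\v{r}), and $W^*\cong W$ is obtained by splitting $\kappa_\ell H^\ell(\bar A^\ell,\Q_p)$ into the two CM lines $(V_\p A^*)^{\otimes\ell}\oplus(V_{\bar\p}A^*)^{\otimes\ell}$ and using the Weil pairing to interchange them under duality, exactly as the paper does via the relation $\rho_\p\rho_{\bar\p}=\chi_{\cyc}$ in the two preceding lemmas. Your explicit computation $M^*\cong M(\ell)$ and the bookkeeping $W=M(k)$, $\ell=2k$ match the paper's argument.
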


\begin{proof}
Recall that $V_f^*(1) \cong V_f$, so we need to show that $W^* \cong W$.  This follows from the two lemmas above.  
\end{proof}

% PROOF OF MAIN THEOREM
\section{Proof of Theorem \ref{main}}\label{proof} 
%Recall that $z = \AJ(\eps_B\eps Y)  = \Phi_T(x) \in H^1_f(H,V)$.  
In what follows, normalized primitive forms $f_\beta  \in S_{2r}(\Gamma_0(N))$ will be indexed by the corresponding $\Q$-algebra homomorphisms $\beta: \T \to \bar \Q$.  We let $\beta_0$ be the homomorphism corresponding to our chosen newform $f$.  
%For each $\sigma \in \Gal(H/K)$ we have chosen an integral ideal $\a = \a_\sigma \in \AA$, where $\AA \in \Pic(\O_K)$ is the class corresponding to $\sigma$ under the Artin map.  
If $\AA \in \Pic(\O_K)$, then  $$F_\AA := \sum_{\beta} \langle z_{\beta,\chi}, z_{\beta,\bar\chi}^\AA \rangle f_\beta$$
is a cusp form in $S_{2r}(\Gamma_0(N); \Q_p(\chi))$.  Indeed, for $(m,N) = 1$, we have
$$\chi(\bar\a)a_m(F_\AA) =\sum_{\beta} \langle z_\beta, \bar z_\beta^\a\rangle \beta(T_m) = \langle z, T_m \bar z^\a\rangle = \langle x, T_m \bar x^\a\rangle \in \Q_p,$$  
because the Hecke operators are self-adjoint with respect to the height pairing.  If $r_\AA(m) = 0$, then we have the decomposition
$$a_m(F_\AA) = c_m^\sigma + d_m^\sigma$$
where
$$c_m^\sigma = \chi(\bar\a)^{-1}\sum_{v \notdivides p} \langle x, T_m \bar x^\a\rangle_v, \hspace{5mm} d^\sigma_m = \chi(\bar\a)^{-1} \sum_{v | p} \langle x, T_m \bar x^\a \rangle_v,$$
and the sums are over \textit{finite} places of $H$.  

Both sides of the equation in Theorem \ref{main} depend linearly on a choice of arithmetic logarithm $\ell_K : \A^\times_K/K^\times \to \Q_p$.  By Theorem \ref{Htvanish}, it suffices to proves the main theorem for cyclotomic $\ell_K$, i.e. $\ell_K = \ell_K \circ \tau$.  As cyclotomic logarithms are unique up to scalar we only need to consider the case $\ell_K = \ell_\Q \circ  \textbf{N}$.  Thus, $\ell_K = \log_p \circ \lambda$, where $\lambda: G(K_\infty/K) \to 1 + p\Z_p$ is the cyclotomic character.  As before, we write $\lambda = \tilde \lambda \circ \textbf{N}$, where $\tilde \lambda : \Z_p^\times \to 1 + p\Z_p$ is given by $\tilde \lambda(x) = \langle x \rangle^{-1}$.

By definition, $$\left.L_p'(f \otimes \chi , \mathbbm{1}) = \frac{d}{ds} L_p(f \otimes \chi, \lambda^s)\right|_{s = 0}.$$
Also by definition,
\begin{align*} L_p(f \otimes \chi, \lambda^s) &= (-1)^{r-1}H_p(f) \left(\frac{D}{-N}\right)\left(1-C\left(\frac{D}{C}\right)\lambda^s(C)^{-1}\right)^{-1}\\
&\hspace{20mm} \times \int_{G(H_{p^\infty}(\mu_{p^\infty})/K)}\lambda^s d\tilde \Psi_{f,1,1}^C
\\&=(-1)^r H_p(f) \left(1-C\left(\frac{D}{C}\right)\tilde \lambda^{-2s}(C)\right)^{-1}\\
& \hspace{20mm} \times \int_{G(H_{p^\infty}(\mu_{p^\infty})/K)}\lambda^s d\tilde \Psi_{f,1,1}^C,
\end{align*}
where $C$ is an arbitrary integer prime to $N|D|p$.  The measure $\tilde \Psi^C_{f,1,1}$ is given by:
$$\tilde \Psi_{f,1,1}^C(\sigma (\mod p^n), \tau (\mod p^m)) = L_{f_0}(\tilde \Psi_{\AA,1}^C(a (\mod p^m)))$$ where $a$ corresponds to the restriction of $\tau$ under the Artin map and $\sigma$ corresponds to $[\AA] \in \Pic(\O_{p^n})$.  We have 
 \begin{align*}
 L_p&(f \otimes \chi)( \lambda^s) =\\
 & (-1)^r H_p(f) \left(1-C\left(\frac{D}{C}\right)\langle C\rangle ^{2s}\right)^{-1}L_{f_0}\left[\sum_{\AA \in \Pic(\O_K)} \int_{\Z_p^\times}\langle x \rangle^{-s} d\tilde \Psi_{\AA,1}^C\right].
 \end{align*}         

Using $\log \langle x \rangle  = \log x$, we compute
\begin{align*}
\left.\frac{d}{ds}\right|_{s=0}\left(\left(1-C\left(\frac{D}{C}\right)\right. \right. & \left. \left. \langle C \rangle^{2s}\right)^{-1}  \int_{\Z_p^\times}  \langle x \rangle^{-s}d\tilde \Psi_\AA^C\right)  \\
&= \left(1 - C\left(\frac{D}{C}\right)\right)^{-1}\int_{\Z_p^\times}\log x \, d\tilde \Psi^C_\AA + (*) \int_{\Z_p^\times} d\tilde \Psi^C_\AA \\
&= \left(1 - C\left(\frac{D}{C}\right)\right)^{-1}\int_{\Z_p^\times}\log x \, d\tilde \Psi^C_\AA 
\end{align*}
The integral $\int_{\Z_p^\times} d\tilde \Psi^C_\AA$ vanishes because by Corollary \ref{Lvanish}, $L_p(f \otimes \chi)(\lambda) = 0$ for all anticyclotomic $\lambda$, in particular for $\lambda = 1$.

If we set $$G_\sigma = (-1)^r \int_{\Z_p^\times} \log_p \, d\tilde \Psi_\AA  \in \bar M_{2r} (\Gamma_0(Np^\infty); \Q_p(\chi)),$$ then using the identity $$\int_{\Z_p^\times} \lambda(\beta) \, d\tilde \Psi^C_\AA = \int_{\Z_p^\times} \lambda (\beta) - C\left(\frac{D}{C}\right)\lambda(C^{-2}\beta) \, d\tilde\Psi_\AA,$$
we obtain 
$$L_p'(f \otimes \chi, \mathbbm{1}) = -H_p(f) \sum_{\sigma \in G(H/K)} L_{f_0}(G_\sigma).$$
Define the operator 
\[\FF = \prod_{\p | p} \left(U_p - p^{r-k-1} \chi(\p) \sigma_{\bar \p}\right)^2.\]
Putting together Corollary \ref{fourier} and Propositions \ref{htcoeff} and \ref{mainid}, we obtain

\begin{proposition}
If $p | m$, $(m,N) = 1$ and $r_\AA(m) = 0$, then
\begin{align*}
\left. c_m^\sigma \right| \FF  = (-1)^{k+1} \left(4|D|\right)^{r-k-1}u^2 a_m(G_\sigma)\bigg| \left(U_p^4 - p^{2r -2}U_p^2\right).
\end{align*}
\end{proposition}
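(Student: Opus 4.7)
The plan is to rewrite both sides of the claimed identity in terms of the combinatorial sequences $\{B_m^\sigma\}$ and $\{C_m^\sigma\}$ introduced after Proposition \ref{htcoeff}, and then to use Proposition \ref{mainid} to bridge them; the entire argument is formal bookkeeping of signs and scalar factors once these three ingredients are in hand.

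For the left-hand side, Proposition \ref{htcoeff} (applicable under the standing hypotheses $p\mid m$, $(m,N)=1$, $r_\AA(m)=0$) lets me factor $m^{r-k-1}$ out of $(4|D|m)^{r-k-1}$ and absorb it into $C_m^\sigma$, yielding
$$c_m^\sigma = -u^2\,\frac{(4|D|)^{r-k-1}}{D^k\binom{2r-2}{r-k-1}}\, C_m^\sigma.$$
The operator $\FF$ acts on the doubly indexed sequence $\{C_m^\sigma\}$ via the formal action described just before Proposition \ref{mainid} (with $U_p:\{a_m^\sigma\}\mapsto\{a_{mp}^\sigma\}$ and $\sigma_\p:\{a_m^\sigma\}\mapsto\{a_m^{\sigma\sigma_\p}\}$), scalars commute through, and the product $\prod_{\p\mid p}(U_p - p^{r-k-1}\chi(\p)\sigma_{\bar\p})^2$ is insensitive to the relabeling $\p\leftrightarrow\bar\p$, so it coincides with the operator appearing in Proposition \ref{mainid}. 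Applying that proposition gives
$$c_m^\sigma\bigm|\FF \;=\; -u^2\,\frac{(4|D|)^{r-k-1}}{D^k\binom{2r-2}{r-k-1}}\,\bigl(U_p^4 - p^{2r-2}U_p^2\bigr)B_m^\sigma.$$

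For the right-hand side, Corollary \ref{fourier} together with the definition $G_\sigma = (-1)^r\int_{\Z_p^\times}\log_p\,d\tilde\Psi_\AA$ (the two factors of $(-1)^r$ cancel) yield
$$a_m(G_\sigma) \;=\; \frac{|D|^{-k}}{\binom{2r-2}{r-k-1}}\,B_m^\sigma \;=\; \frac{(-1)^k}{D^k\binom{2r-2}{r-k-1}}\,B_m^\sigma,$$
using $D<0$ so $D^k = (-1)^k|D|^k$. Substituting into the claimed right-hand side produces an overall sign of $(-1)^{k+1}\cdot(-1)^k = -1$ times the same scalar factor $(4|D|)^{r-k-1}u^2/\bigl(D^k\binom{2r-2}{r-k-1}\bigr)$ that appears in the expression for $c_m^\sigma|\FF$, and the two sides agree coefficient-wise.

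The only real subtlety is the sign arithmetic: the factor $|D|^{-k}$ in Corollary \ref{fourier} against $D^{-k}$ in Proposition \ref{htcoeff} is precisely what accounts for the explicit $(-1)^{k+1}$ in the statement of the proposition. Beyond this, the main obstacle is simply verifying that $\FF$ agrees with the operator in Proposition \ref{mainid}, which is immediate after the relabeling $\p\leftrightarrow\bar\p$; everything else is direct substitution.
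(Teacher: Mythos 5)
Your proof is correct and is essentially the paper's own argument: the paper proves this proposition simply by ``putting together Corollary \ref{fourier} and Propositions \ref{htcoeff} and \ref{mainid},'' and your write-up is exactly that assembly, with the scalar $-u^2(4|D|)^{r-k-1}/\bigl(D^k\binom{2r-2}{r-k-1}\bigr)$, the relabeling $\p\leftrightarrow\bar\p$ identifying $\FF$ with the operator of Proposition \ref{mainid}, and the sign bookkeeping $D^k=(-1)^k|D|^k$ all handled correctly.
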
 
We define the $p$-adic modular form
$$H_\sigma = F_\AA | \FF +(-1)^k \left(4|D|\right)^{r-k-1}u^2  G_\sigma \bigg| \left(U^4_p - p^{2r-2}U_p^2\right).$$ 
By construction, when $p | m$, $(m,N) = 1$ and $r_\AA(m) = 0$, we have
$$a_m(H_\sigma) = d_m^\sigma | \FF = \chi(\bar\a)^{-1}\sum_{v | p} \langle x, T_m \bar x^\a\rangle_v | \FF.$$

\begin{proposition}\label{localvanishing}
Define the operator
$$\FF' = (U_p - \sigma_\p)(U_p\sigma_\p - p^{2r-2})(U_p - \sigma_{\bar\p})(U_p\sigma_{\bar\p} - p^{2r-2}).$$
Then $L_{f_0}(H_\sigma | \FF') = 0$.
\end{proposition}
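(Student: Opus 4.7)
The plan is to follow the strategy of Nekov\'a\v{r} \cite[II.5]{Nek}, with the rigorous treatment of the $p$-adic local heights supplied by the final section of this paper. By the construction of $H_\sigma$, its $m$-th Fourier coefficient (when $p \mid m$, $(m,N) = 1$, and $r_\AA(m) = 0$) equals $\chi(\bar \a)^{-1} \sum_{v \mid p} \langle x, T_m \bar x^\a \rangle_v \big| \FF$. Thus applying $L_{f_0}$ and $\FF'$ amounts to showing that a certain explicit combination of local heights at places above $p$, projected onto the $f$-isotypic component, vanishes.

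The first step is to compute the local heights $\langle x, T_m \bar x^\a \rangle_v$ at $v \mid p$ using Nekov\'a\v{r}'s general formalism of $p$-adic heights for ordinary Galois representations \cite{Nekhts}. By Theorem \ref{ord}, the local Galois representation $V_{f,A,\ell} |_{G_{H_v}}$ is ordinary at every $v \mid p$, so the canonical splitting of the local Hodge filtration yields an explicit formula for the local height in terms of the ordinary filtration and a Coleman-type $p$-adic integral.

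The next step is to translate these formal local heights into concrete quantities computable on the Tate vectors $x$ and $\bar x^\a$. This is carried out using Theorem \ref{crysmixed} (a $p$-adic Hodge theory result built on Faltings' $C_\cris$ theorem) together with the theory of relative Lubin-Tate formal groups. The output is an explicit expression in which $U_p$ corresponds to a Frobenius-type operator on the crystalline realization of $V_{f,A,\ell}$, while $\sigma_\p$ and $\sigma_{\bar\p}$ correspond to the CM Galois action on the $p$-adic realization of the motive $M(\chi)$.

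Once this identification has been established, the annihilation by $\FF'$ reduces to a direct verification: the four linear factors of $\FF'$ match the four Frobenius eigenvalues on the graded pieces of the ordinary filtration of the $f$-isotypic quotient of $V_{f,A,\ell}$ (after projection by $L_{f_0}$, which forces $U_p$ to act via the unit root $\alpha_p(f)$), so each factor kills exactly one eigencomponent of the local pairing. The main obstacle is the second step above: making sense of the ``mixed'' local height computation on $X = W_H \times A^\ell$ rigorously at $p$-adic places. This is precisely the content of the final section, which repairs the corresponding step in \cite[II.5]{Nek} by invoking Theorem \ref{crysmixed} and the associated Lubin-Tate machinery.
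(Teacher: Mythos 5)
Your overall framing (defer to Nekov\'a\v{r} \cite[II.5.10]{Nek} as repaired in the final section, with ordinarity and crystallinity of $V_{f,A,\ell}$ as the essential inputs) matches the paper, but the step where you actually conclude the vanishing is not the argument that works, and it omits the real content. The paper's proof does not diagonalize the local height pairing: the operators $U_p$, $\sigma_\p$, $\sigma_{\bar\p}$ appearing in $\FF'$ act on the \emph{family} of Tate cycles --- $U_p$ through the Hecke correspondences $T_{mp^n}$, which move the supporting points to elliptic curves with CM by the order of index $p^n$ in $\O_K$, and $\sigma_\p$ through the translation $C_m^\sigma \mapsto C_m^{\sigma\sigma_\p}$ on the $\Gal(H/K)$-indexing --- not on a single local cohomology class via ``Frobenius eigenvalues.'' The purpose of $\FF'$ is the Euler-system-type norm relation: applied to the Tate vectors of $p$-power conductor it produces (up to terms killed by the local Abel--Jacobi map) norms $N_{H_{j,w}/H_v}(h_j^\sigma)$ from deep layers of the local ring class field tower, and the proposition then follows from Theorem \ref{vanishing}, i.e.\ $\lim_{j\to\infty}\langle x_f, N_{H_{j,w}/H_v}(h_j^\sigma)\rangle_v = 0$. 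That limit is where all the hard inputs enter: ordinarity to bound denominators (together with Proposition \ref{hodgetate}), crystallinity of the mixed extensions (Theorem \ref{crysmixed}, which requires Lemma \ref{disjoint}), and the relative Lubin--Tate description of $H_\infty$, which shows $\log_p\bigl(N_{H_{j,w}/\Q_p}\bigl(\widehat{\O_{H_{j,w}}^\times}\bigr)\bigr) \subset p^j\Z_p$.

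Your third step --- ``the four linear factors of $\FF'$ match the four Frobenius eigenvalues \dots so each factor kills exactly one eigencomponent of the local pairing'' --- is not a proof and cannot be made into one as stated: after applying $L_{f_0}$ the factors become the nonzero scalars (times Galois translations) $\alpha_p(f) - \sigma_\p$, $\alpha_p(f)\sigma_\p - p^{2r-2}$, etc., which annihilate nothing pointwise, and the local height at $v\mid p$ is not an eigenvalue computation on graded pieces of the ordinary filtration. You have also misidentified $\sigma_\p$ and $\sigma_{\bar\p}$: they are Frobenius elements of $\Gal(H/K)$ permuting the conjugate cycles, not ``the CM Galois action on the $p$-adic realization of $M(\chi)$.'' What is missing from your write-up is precisely the norm-relation-plus-limit mechanism, which is the entire point of the final section of the paper.
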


\begin{proof}
The proof should be exactly as in \cite[II.5.10]{Nek}, however the proof given there is not correct.  In the next section we explain how to modify Nekov\'a\v{r}'s argument to prove the desired vanishing.  For our purposes in this section, the important point is that this modified proof goes through if we replace the representation $V_{f,A,0} = V_f$ (i.e. the $\ell = 0$ case which Nekov\'a\v{r} considers) with our representation $V_{f,A,\ell} = V_f \otimes W$, where $W$ corresponds to a trivial local system.  Indeed, the proof works ``on the curve" and essentially ignores the local system.  The only inputs specific to the local system are two representation-theoretic conditions: it suffices to know that the representation $V_{f,A,\ell}$ is ordinary and crystalline.  These follow from Theorems \ref{ord} and \ref{AJ}, respectively.    
\end{proof}

It follows that
$$L_{f_0}\left(F_\AA  | \FF \FF'\right) = (-1)^{k+1}\left(4|D|\right)^{r-k-1}u^2 L_{f_0}\left(G_\sigma \bigg| \left(U^4_p - p^{2r-2}U^2_p\right)\FF'\right).$$
Since $L_{f_0} \circ U_p  = \alpha_p(f)L_{f_0}$, we can remove $\FF'$ from the equation above; we may divide out the extra factors that arise as they are non-zero by the Weil conjectures.  Summing this formula over $\sigma \in \Gal(H/K)$, we obtain
\begin{align*}
L_{f_0}(f) &\prod_{\p | p} \left(1 - \frac{\chi(\p)p^{r-k-1}}{\alpha_p(f)}\right)^2\sum_{\sigma \in \Gal(H/K)}  \langle z_f, z_{f,\bar\chi}^\AA\rangle \\
  &= (-1)^k \left(4|D|\right)^{r-k-1}u^2 H_p(f)^{-1}\left(1 - \frac{p^{2r-2}}{\alpha_p(f)^2}\right) L_p'(f \otimes \chi, \mathbbm{1}).
\end{align*}
Note that the operators $\sigma_\p$ and $\sigma_{\bar \p}$ (in the definition of $\FF$) permute the various $\langle z_f, z_{f,\bar\chi}^\AA\rangle$ as $\AA$ ranges through the class group.  So after summing over $\Gal(H/K)$, these operators have no effect and therefore do not show up in the Euler product in the left hand side.\footnote{This is unlike what happens in \cite{Nek}.  The difference stems from the fact that we inserted the Hecke character into the definition of the measures defining the $p$-adic $L$-function.}    
By Hida's computation \cite[I.2.4.2]{Nek}:
$$\left(1 - \frac{p^{2r-2}}{\alpha_p(f)^2}\right)= H_p(f)L_{f_0}(f),$$
so we obtain
\begin{equation*}L'_p(f \otimes \chi, \mathbbm{1}) =(-1)^k\prod_{\p | p}\left(1 - \frac{\chi(\p)p^{r-k-1}}{\alpha_p(f)} \right)^2 \frac{\sum_{\AA \in \Pic(\O_K)} \langle z_f, z_{f,\bar\chi}^\AA \rangle}{\left(4|D|\right)^{r-k-1}u^2}.
\end{equation*}
By equation (\ref{ortho}), this equals 
\[ (-1)^k\prod_{\p | p}\left(1 - \frac{\chi(\p)p^{r-k-1}}{\alpha_p(f)} \right)^2 \frac{h \langle z_{f,\chi} z_{f,\bar\chi}^\AA \rangle}{\left(4|D|\right)^{r-k-1}u^2}\]
proves Theorem \ref{main}.  

\begin{proof}[Proof of Theorem $\ref{PRproof}$]
We now assume $\chi = \psi^\ell$ as in Section \ref{apps}. Recall that the cohomology classes $z_f$ and $\bar z_f$ live in $H^1_f(H, V_{f,A,\ell})$.  Recall also $V_{f,A,\ell}$ is the 4-dimensional $p$-adic realization of the motive $M(f)_H \otimes M(\chi_H)$ over $H$ with coefficients in $\Q(f)$.  Using Remark \ref{descend}, we have a motive $M(f)_K \otimes M(\chi)$ over $K$ with coefficients in $\Q(f,\chi)$ descending $M(f)_H \otimes M(\chi_H) \otimes \Q(\chi)$.  The $p$-adic realization of this motive over $K$ is what we called $V_{f,\chi}$.     

Thus we may think of the classes $z_f$ and $\bar z_f$ in $H^1_f(H, V_{f,A,\ell}) \cong H^1(H, V_{f,\chi})$.  Define
\[z^K_f = \cor_{H/K}(z_f) \hspace{5mm} \mbox{and} \hspace{5mm} \bar z^K_f = \cor_{H/K}(\bar z_f)\]
in $H^1_f(K, V_{f,\chi})$.  

\begin{lemma}\label{cores}
%\[z^K_f = \cor_{H/K}(z_f) \hspace{5mm} \mbox{and} \hspace{5mm} \bar z^K_f = \cor_{H/K}(\bar z_f).\]
\[\res_{H/K}(z_f^K) = hz_{f,\chi}\hspace{5mm} \mbox{and} \hspace{5mm} \res_{H/K}(\bar z_f^K) = h z_{f, \bar\chi}.\]
\end{lemma}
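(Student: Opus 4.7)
The plan is to combine the standard group cohomology identity $\res_{H/K} \circ \cor_{H/K} = \sum_{\sigma \in \Gal(H/K)} \sigma^*$ with an explicit description of how $\sigma^*$ acts on the class $z_f$, using the descent data of Remark \ref{descend}. Applied to $V = V_{f,\chi}$ (via the identification $V_{f,\chi}|_{G_H} \cong V_{f,A,\ell}$), this identity yields
\[\res_{H/K}(z_f^K) = \sum_{\sigma \in \Gal(H/K)} \sigma^*(z_f),\]
where $\sigma^*$ denotes the natural $\Gal(H/K)$-action on $H^1(H, V_{f,A,\ell})$ coming from the $G_K$-structure on $V_{f,\chi}$.

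The key step is to show that if $\sigma \in \Gal(H/K)$ corresponds to the class $\AA \in \Pic(\O_K)$ under the Artin map, then $\sigma^*(z_f) = z_{f,\chi}^{\AA}$. Fix an integral representative $\a \in \AA$. From the proof of Lemma \ref{invariance} we already have the relation $(\id \times \phi_\a^\ell)^* \sigma(Y) = Y^\a$ on Chow groups, after identifying $A^\sigma$ with $A^\a = A/A[\a]$. Passing to the $p$-adic realization, the descent datum $\Lambda(\sigma) = \kappa_\ell \circ (\Gamma(\sigma) + \bar\Gamma(\sigma)) \circ \kappa_\ell^\sigma$ from Remark \ref{descend} contributes the twist $\chi(\a)^{-1}$ on the $\e_\ell$-component (the $V_\p A^{\otimes \ell}$-summand of $\kappa_\ell H_\et^\ell(\bar A^\ell, \Q_p(k))$). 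Since $\e_B$ and $\e_W$ are defined over $\Q$ and commute with Galois and with Abel-Jacobi, this gives
\[\sigma^*(z_f) = \chi(\a)^{-1}\Phi_f(\e_B \e Y^\a) = \chi(\a)^{-1} z_f^\a = z_{f,\chi}^{\AA}.\]

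Summing over $\sigma \in \Gal(H/K)$ and using the bijection with $\Pic(\O_K)$ (of cardinality $h = h_K$) via the Artin map then gives
\[\res_{H/K}(z_f^K) = \sum_{\AA \in \Pic(\O_K)} z_{f,\chi}^\AA = h\cdot z_{f,\chi},\]
which is the first statement. The proof of the second statement is formally identical: replace $\e$ by $\bar\e$ throughout, and use the $\bar\Gamma(\sigma)$-half of the descent data, which contributes the twist $\bar\chi(\a)^{-1}$ on the $\bar\e_\ell$-component, yielding $\sigma^*(\bar z_f) = \bar\chi(\a)^{-1}\bar z_f^\a = z_{f,\bar\chi}^\AA$.

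The main obstacle is the verification in the middle step that the geometric transport-of-structure relation $(\xi \circ \sigma)(Y) = Y^\a$ of Lemma \ref{invariance} matches the abstract $\sigma^*$-action coming from the descent data. Concretely, one must check that the twisting factor $\chi(\a)^{-1}$ built into $\Gamma(\sigma)$ in Remark \ref{descend} is exactly the compensation needed to make $\xi_* \sigma_*$ and $\sigma^*$ agree on the $\e_\ell$-isotypic subspace (and similarly $\bar\chi(\a)^{-1}$ on the $\bar\e_\ell$-isotypic subspace); this is essentially why the descent data were set up this way in the first place, but the bookkeeping deserves care.
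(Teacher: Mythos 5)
Your proposal is correct and follows essentially the same route as the paper: reduce via $\res\circ\cor=\Nm=\sum_\sigma\sigma^*$ to the identity $\sigma^*(z_f)=z_{f,\chi}^{\AA}$, identify the abstract $G_K$-action on $\e V_{f,A,\ell}$ with the composite $\chi(\a)^{-1}\phi_\a^{\ell*}\circ\sigma^*$ coming from the descent data of Remark \ref{descend}, and then invoke the geometric computation $(\id\times\phi_\a^\ell)^*\sigma(Y)=Y^\a$ from the proof of Lemma \ref{invariance}. The only difference is one of emphasis: you flag the compatibility of the descent-data twist with the transport-of-structure map as the point needing care, which the paper disposes of by asserting that the composite \emph{is} the $M(\chi)$-action by construction.
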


\begin{proof}
Note that there is a natural action of $\Gal(H/K)$ on $H^1(H, V_{f,\chi})$, since $V_{f,\chi}$ is a $G_K$-representation.  Since $\res \circ \cor = \Nm$, it suffices to show that for each $\sigma \in \Gal(H/K)$, $z_f^\sigma = z_{f,\chi}^\AA$ and $\bar z^\sigma_f = z_{f, \bar \chi}^\AA$, where $\AA$ corresponds to $\sigma$ under the Artin map.  Recall that 
\[z_{f,\chi}^\AA = \chi(\a)^{-1} \Phi_f\left(\e_B\e Y^\a\right) \hspace{5mm} \mbox{and} \hspace{5mm} z_{f,\bar \chi}^\AA = \chi(\bar \a)^{-1} \Phi_f\left(\e_B\bar\e Y^\a\right),\] for any ideal $\a$ in the class of $\AA$.

To prove $z_f^\sigma = z_{f,\chi}^\AA$, we first describe explicitly the action of $\Gal(\bar K/K)$ on the subspace $\e V_{f,A,\ell} \subset V_{f,A,\ell}$, after identifying the spaces $V_{f,A,\ell}$ and $V_{f,\chi}$.  For each $\sigma \in \Gal(\bar K/K)$, we have maps
\[\e_\ell H^\ell(\bar A^\ell, \Q_p) \stackrel{\sigma^*}{\longrightarrow} \e_\ell^\sigma H^\ell(\overline{A^\sigma}^\ell, \Q_p) \xrightarrow{\chi(\a)^{-1}\phi_\a^{\ell *}}\e_\ell H^\ell(\bar A^\ell, \Q_p),\]
which induces an action of $G_K$ on $\e V_{f,A,\ell} = V_f \otimes \e H^\ell(\bar A^\ell, \Q_p(k))$.  By definition of $M(\chi)$, this agrees with the action of $G_K$ on $V_{f,\chi}$.  Now the argument in the proof of Lemma \ref{invariance} shows that $z_f^\sigma = z_{f,\chi}^\AA$.  A similar argument works for $\bar z_f^\sigma$.    
\end{proof}

By Lemma \ref{cores}, $\res_{H/K}(z^K_{f,\chi} ) = hz_{f,\chi}$ and $\res_{H/K}(\bar z^K_f) = h z_{f, \bar\chi}$. It follows that 
\begin{equation}\label{hteq}
\left\langle z^K_f,\bar z^K_f \right\rangle_K = h \left\langle z_{f,\chi} , z_{f,\bar\chi}\right\rangle_H.
\end{equation} 

Now assume that $L'_p(f \otimes \chi, \ell_K, \mathbbm{1}) \neq 0$.  By Theorem \ref{main} and (\ref{hteq}), the cohomology classes $z_f^K$ and $\bar z_f^K$ are non-zero, giving two independent elements of $H^1_f(K, V_{f,\chi}).$ This proves one inequality in Perrin-Riou's conjecture (\ref{PRconj}).       
%Just as in the trivial Hecke character case, this proves one inequality in Perrin-Riou's $p$-adic Bloch-Kato conjecture \cite[2.7]{colmez} \cite[4.2.2]{PRbook}, relating the order of vanishing of  a $p$-adic $L$-function attached to the motive $f \otimes \Theta_\chi$ to the rank of the appropriate Bloch-Kato Selmer group.    
%
The other inequality follows from forthcoming work of Elias \cite{yara} constructing an Euler system of generalized Heegner classes and extending the methods of Kolyvagin and Nekov\' a\v r in \cite{NekEuler} to our setting (see also \cite[Theorem B]{hsieh}).  
\end{proof}

% FIXING NEKOVAR

\section{Local $p$-adic heights at primes above $p$}\label{nekfix}

The purpose of this last section is to fix the proof of \cite[II.5.10]{Nek} on which both Nekov\'a\v{r}'s Theorem A and our main theorem rely.  In the first two subsections we gather some facts about relative Lubin-Tate groups and ring class field towers, and in \ref{fixes} we explain how to modify the proof in \cite{Nek}.  We have isolated and fixed two arguments of \cite[II.5]{Nek}, instead of rewriting the entire argument of that section.
%; we leave it to the interested reader to make the mostly obvious adjustments to Nekov\'a\v{r}'s argument and give a careful proof of Proposition \ref{localvanishing} using the results of \ref{fixes}.  

%RELATIVE LUBIN-TATE
\subsection{Relative Lubin-Tate groups}
The reference for this material is \cite[\S 1]{dS}.  

Let $F/\Q_p$ be a finite extension and let $L$ be the unramified extension of $K$ of degree $\delta \geq 1$.  Write $\m_F$ and $\m_L$ for the maximal ideals in $\O_F$ and $\O_L$ and write $q$ for the cardinality of $\O_F/\m_F$.  We let $\phi: L \to L$ be the Frobenius automorphism lifting $x \to x^q$ and normalize the valuation on $F$ so that a uniformizer has valuation 1.  Let $\xi \in F$ be an element of valuation $\delta$ and let $f \in \O_{L}[[X]]$ be such that 
\[f(X) = \varpi X + O(X^2) \hspace{4mm} \mbox{and} \hspace{4mm} f(X) \equiv X^q \,  \mod \m_L,\]
where $\varpi \in \O_L$ satisfies $\Nm_{L/F}(\varpi) = \xi$.  Note that $\varpi$ exists and is a uniformizer, since $\Nm_{L/F}(L^\times)$ is the set of elements in $F^\times$ with valuation in $\delta\Z$.

\begin{theorem}
There is a unique one dimensional formal group law $F_f \in \O_L[[X,Y]]$ for which $f$ is a lift of Frobenius, i.e.\ for which $f \in \Hom(F_f, F_f^\phi)$.  $F_f$ comes equipped with an isomorphism $\O_F \cong \End(F_f)$ denoted $a \mapsto [a]_f$, and the isomorphism class of $F_f/\O_L$ depends only on $\xi$ and not on the choice of $f$.        
\end{theorem}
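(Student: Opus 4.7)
The plan is to follow the standard relative Lubin--Tate construction, which rests on a ``functional equation lemma'' in the spirit of Hazewinkel. Concretely, I would first prove the following existence and uniqueness statement: given $f$ as in the theorem and any linear form $g(X_1,\dots,X_n)=\sum_i c_i X_i\in\O_L[X_1,\dots,X_n]$, there exists a unique power series $F(X_1,\dots,X_n)\in\O_L[[X_1,\dots,X_n]]$ with linear term $g$ satisfying the functional equation
\[f\circ F \;=\; F^\phi\circ (f(X_1),\dots,f(X_n)),\]
where $F^\phi$ denotes the series obtained by applying $\phi$ to each coefficient of $F$. This is proved by successive approximation: having constructed $F$ modulo total degree $N$, one solves for the homogeneous piece of degree $N$ using the congruences $f(X)\equiv \varpi X$ modulo degree two and $f(X)\equiv X^q$ modulo $\m_L$, which together make the relevant linear operator on homogeneous polynomials invertible over $\O_L$.

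Given this lemma, the formal group law $F_f(X,Y)$ is obtained by taking $g(X,Y)=X+Y$. Commutativity follows from uniqueness applied to $F_f(X,Y)$ and $F_f(Y,X)$, and associativity follows from uniqueness applied to $F_f(F_f(X,Y),Z)$ and $F_f(X,F_f(Y,Z))$, since each satisfies the functional equation in three variables with linear part $X+Y+Z$. For $a\in\O_F$, let $[a]_f$ be the unique series with linear term $aX$ satisfying the functional equation; then $[a]_f$ is an endomorphism of $F_f$ because both $[a]_f\circ F_f$ and $F_f\circ([a]_f\times[a]_f)$ satisfy the functional equation with linear part $aX+aY$. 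The same uniqueness shows $[a+b]_f=[a]_f+_{F_f}[b]_f$ and $[ab]_f=[a]_f\circ[b]_f$, so $a\mapsto[a]_f$ is a ring homomorphism $\O_F\to\End(F_f)$, injective because the linear term is $aX$. Surjectivity follows from the fact that any endomorphism $h$ of $F_f$ with linear term $aX$, $a\in\O_L$, must satisfy the functional equation (by uniqueness applied to $f\circ h$ and $h^\phi\circ f$, both with linear term $a^\phi\varpi X$), which forces $a\in\O_F$ (by looking at $f\circ h=h^\phi\circ f$ modulo degree two, one gets $\varpi a=a^\phi\varpi$, i.e.\ $a^\phi\varpi/\varpi=a$; combined with $\Nm_{L/F}(\varpi)=\xi\in F$ this pins $a$ down to lie in $\O_F$).

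For the independence of the isomorphism class on the choice of $f$: given another lift $f'$ of Frobenius associated to some uniformizer $\varpi'$ of $\O_L$ with $\Nm_{L/F}(\varpi')=\xi$, one has $\varpi'/\varpi\in\O_L^\times$ with norm $1$ to $F$, so by Hilbert~90 for $L/F$ we can write $\varpi'/\varpi=u^\phi/u$ for some $u\in\O_L^\times$. Apply the functional equation lemma (with $f$ and $f'$ playing asymmetric roles) to produce the unique series $\theta\in\O_L[[X]]$ with linear term $uX$ satisfying $f'\circ\theta=\theta^\phi\circ f$; uniqueness then forces $\theta\circ F_f=F_{f'}\circ(\theta\times\theta)$ and $\theta\circ[a]_f=[a]_{f'}\circ\theta$, so $\theta$ is an $\O_F$-linear isomorphism $F_f\xrightarrow{\sim}F_{f'}$ over $\O_L$.

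The main technical obstacle is the functional equation lemma itself: one must check at each inductive step that the linear operator $G\mapsto \varpi G - G^\phi\circ(f(X_1),\dots,f(X_n))$ on homogeneous polynomials of degree $N\ge 2$ is invertible over $\O_L$. Modulo $\m_L$ this operator reduces to $G\mapsto -G^\phi(X_1^q,\dots,X_n^q)$, which is bijective on homogeneous polynomials of any fixed degree $N\geq 2$ with coefficients in $\O_L/\m_L$; combined with $\m_L$-adic completeness this gives invertibility, and the induction goes through with all coefficients lying in $\O_L$.
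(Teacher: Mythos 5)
The paper offers no proof of this theorem at all --- it simply cites de Shalit \cite[\S 1]{dS} --- and your strategy is the standard one used there: a functional-equation lemma proved by successive approximation, then repeated appeals to uniqueness. But your key lemma is false as stated. If $F$ has linear part $\sum c_iX_i$, then comparing degree-one terms of $f\circ F=F^\phi\circ(f,\dots,f)$ forces $\varpi c_i=\varpi c_i^\phi$, i.e.\ $c_i\in\O_F$; for $c_i\in\O_L\setminus\O_F$ no solution exists, and no choice of higher-order terms can repair a degree-one discrepancy. (For the two-series version with $f'\neq f$ the condition is $\varpi'c_i=\varpi c_i^\phi$ --- which is exactly why you needed Hilbert~90 to produce $u$, so you are implicitly aware of it.) This constraint is not a technicality: it is what confines the operators $[a]_f$ produced by the lemma to $a\in\O_F$. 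Your description of the inductive step is also off: the operator $G\mapsto\varpi G-G^\phi\circ(f(X_1),\dots,f(X_n))$ does not preserve homogeneous polynomials of degree $N$ (mod $\m_L$ it sends degree $N$ to degree $qN$, so it is certainly not a bijection of that space). The correct step is to solve $\varpi G-\varpi^NG^\phi=E_N$, where $E_N$ is the degree-$N$ part of the error $F^\phi_{N-1}\circ f-f\circ F_{N-1}$; one uses $f\equiv X^q\bmod\m_L$ to show $E_N\equiv 0\bmod\varpi$, and then $N\geq 2$ makes ${\rm id}-\varpi^{N-1}\phi$ invertible over $\O_L$.

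The second genuine gap is the surjectivity of $\O_F\to\End_{\O_L}(F_f)$, which is argued circularly: you apply uniqueness to $f\circ h$ and $h^\phi\circ f$ on the grounds that ``both have linear term $a^\phi\varpi X$,'' but $f\circ h$ has linear term $\varpi aX$, and $\varpi a=a^\phi\varpi$ (equivalently $a\in\O_F$) is precisely the conclusion you want; moreover, to invoke uniqueness you would first have to check that $f\circ h$ itself satisfies a functional equation, which you have not done. What is true without circularity is that both $f\circ h$ and $h^\phi\circ f$ lie in $\Hom_{\O_L}(F_f,F_f^\phi)$, and in characteristic zero a homomorphism of one-dimensional formal groups is determined by its derivative at the origin; so $f\circ h=h^\phi\circ f$ if and only if $h'(0)\in\O_F$, and the real content is to show that $h'(0)\in\O_F$ for \emph{every} $\O_L$-endomorphism $h$. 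That needs an extra input --- for instance, injectivity of reduction mod $\m_L$ on endomorphisms together with the fact that $\bar F_f$ has $\O_F$-height one, so that $\End(\bar F_f)\otimes\Q_p$ contains no commutative subfield of degree over $\Q_p$ exceeding $[F:\Q_p]$. The remaining parts of your sketch (commutativity, associativity, the ring-homomorphism property of $a\mapsto[a]_f$, and the isomorphism $F_f\cong F_{f'}$ via Hilbert~90) are correct and agree with the cited reference.
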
          

Now let $M$ be the valuation ideal of $\C_p$ and let $M_f$ the $M$-valued points of $F_f$.  For each $n \geq 0$, the $\m_F^n$-torsion points of $F_f$ are by definition
\[W_f^n = \{ \omega \in M_f : \,  [a]_f(\omega) = 0 \hspace{3mm}  \mbox{for all} \hspace{3mm} a \in \m_F^n\} \]

\begin{proposition}\label{relLT}
For each $n \geq 1$, set $L^n_\xi = L(W_f^n)$.  Then
\begin{enumerate} 
\item $L_\xi^n$ is a totally ramified extension of $L$ of degree $(q-1)q^{n-1}$ and is abelian over $F$.  
\item There is a canonical isomorphism $(\O_F/\m_F^n)^\times \cong \Gal(L^n_\xi/L)$ given by $u \mapsto \sigma_u$, where $\sigma_u(\omega) = [u^{-1}]_f(\omega)$ for $\omega \in W_f^n$.  
\item Both the field $L^n_\xi$ and the isomorphism above are independent of the choice of $f$.  
\item The map $u \mapsto \sigma_u$ is compatible with the local Artin map $r_F: F^\times \to \Gal(F^\ab/F)$. 
\item The field $L^n_\xi$ corresponds to the subgroup $\xi^\Z\cdot \left(1 + \m_F^n\right) \subset F^\times$ via local class field theory.   
\end{enumerate}
\end{proposition}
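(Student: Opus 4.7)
The argument closely follows the classical Lubin-Tate story \cite[\S1]{dS}, with three modifications to handle that $f\colon F_f \to F_f^\phi$ is a relative, rather than absolute, Frobenius lift. I would organize the proof in three stages: (a) intrinsic analysis of the torsion module $W_f^n$, (b) independence under the choices of $f$ and of $\varpi$, and (c) identification with local class field theory.

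For (a), let $\varpi_F$ be a uniformizer of $F$, so $\xi = u\varpi_F^\delta$ for some unit $u \in \O_F^\times$. The power series $[\varpi_F]_f \in \O_L[[X]]$ is $\varpi_F X + O(X^2)$ and, by the standard Weierstrass analysis in the relative setup (using the height-$1$ nature of $F_f$), reduces mod $\m_L$ to a power series of order exactly $q$. A Newton polygon argument then shows the nonzero roots of $[\varpi_F]_f$ have valuation $1/(q-1)$; they generate the degree-$(q-1)$ totally ramified extension $L(W_f^1)/L$. Induction on $n$, solving $[\varpi_F]_f(X) = \omega$ for a generator $\omega$ of $W_f^{n-1}$, upgrades this to $[L(W_f^n):L(W_f^{n-1})] = q$ with each stage totally ramified, hence $[L_\xi^n:L] = (q-1)q^{n-1}$. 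This also exhibits $W_f^n$ as a free $\O_F/\m_F^n$-module of rank $1$; the natural faithful action of $\Gal(L_\xi^n/L)$ on this module embeds it into $\Aut_{\O_F}(W_f^n) = (\O_F/\m_F^n)^\times$, and the degree count forces surjectivity. This proves (2).

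For (b), given two Frobenius lifts $f,g$ associated to the same $\varpi$, the iterative Lubin-Tate construction yields an $\O_L$-isomorphism $\theta\colon F_f \xrightarrow{\sim} F_g$ determined uniquely up to composition with $[a]_f$ for $a \in \O_F^\times$; since $\theta$ intertwines $[\cdot]_f$ and $[\cdot]_g$, it sends $W_f^n$ bijectively onto $W_g^n$, so $L(W_f^n) = L(W_g^n)$. For a second uniformizer $\varpi' = u\varpi$ with $\Nm_{L/F}(\varpi') = \xi$, the unit $u$ has norm $1$; by Hilbert~90 applied to the cyclic extension $L/F$, $u = v^\phi / v$ for some $v \in \O_L^\times$, and the rescaled series $\psi(X) = vX + \cdots$ then intertwines the two Frobenius lifts and provides an isomorphism of formal groups respecting torsion. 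Together these prove (3) and show $L_\xi^n$ is stable under any $\tau \in \Gal(\bar F/F)$ (which permutes the norm-$\xi$ uniformizers), so $L_\xi^n/F$ is Galois; the sequence $1 \to (\O_F/\m_F^n)^\times \to \Gal(L_\xi^n/F) \to \Gal(L/F) \to 1$ is checked to be abelian from the explicit Frobenius action on torsion, completing (1).

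The main obstacle is (4)--(5): matching $u \mapsto \sigma_u$ with the local Artin map $r_F$. I would reduce this to the classical (absolute) Lubin-Tate reciprocity for $F$, which pins down $r_F$ on its ramified part. The key identity to verify is $r_F(\xi)|_{L_\xi^n} = \id$, equivalently $\xi \in \Nm_{L_\xi^n/F}\bigl((L_\xi^n)^\times\bigr)$. This follows because $\xi = \Nm_{L/F}(\varpi)$ and $r_F \circ \Nm_{L/F}$ equals the restriction to $L$ of $r_L$, while $r_L(\varpi)$ acts trivially on the classical Lubin-Tate tower over $L$ attached to the formal group $F_f$ with uniformizer $\varpi$. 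Item (5) then falls out of (4) by reading off kernels of characters factoring through $\Gal(L_\xi^n/F)$. The bookkeeping here---in particular relating the Frobenius $\phi$ on $L$, the action of $r_F(\xi)$ on $L^\ur$, and the action of $r_L(\varpi)$ on the torsion of $F_f$---is the most delicate point of the entire argument.
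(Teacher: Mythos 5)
First, note that the paper does not actually prove this proposition: it is quoted from \cite[\S 1]{dS}, so the only benchmark is the standard argument there. Your treatment of (1)--(3) is a correct reconstruction of that argument: the Weierstrass/Newton-polygon analysis of $[\varpi_F]_f$ (using $\O_F$-height one), the induction giving total ramification of degree $(q-1)q^{n-1}$, the identification $W_f^n \cong \O_F/\m_F^n$ forcing $\Gal(L_\xi^n/L) \cong (\O_F/\m_F^n)^\times$ by a degree count, and the Hilbert 90 comparison of two choices of $\varpi$ with the same norm are all exactly as in de Shalit.

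The gap is in your key step for (4)--(5). You assert that $r_L(\varpi)$ acts trivially on ``the classical Lubin--Tate tower over $L$ attached to the formal group $F_f$ with uniformizer $\varpi$,'' but no such object exists: $F_f$ is a formal $\O_F$-module over $\O_L$, not a formal $\O_L$-module, and for $\delta>1$ the series $f$ is an isogeny $F_f \to F_f^\phi$ rather than an endomorphism, with $f \equiv X^q$ rather than $X^{q^\delta} \pmod{\m_L}$. Concretely, the genuine Lubin--Tate tower of $(L,\varpi)$ has $n$-th layer of degree $(q^\delta-1)q^{\delta(n-1)}$ over $L$, whereas $[L_\xi^n:L]=(q-1)q^{n-1}$, so $L_\xi^n$ is not contained in that tower for obvious reasons of Galois theory alone; the statement you need, namely $\varpi \in \Nm_{L_\xi^n/L}\bigl((L_\xi^n)^\times\bigr)$ (equivalently $L_\xi^n \subset L_\varpi^{\infty}$ for the tower of $(L,\varpi)$), is essentially equivalent to assertion (5) and cannot be extracted from the classical theory over $L$ without circularity. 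The same issue affects the other half of (4): classical reciprocity over $F$ pins down $r_F(u)$ on the tower of a Lubin--Tate group \emph{defined over $\O_F$}, and you give no mechanism transporting that to $W_f^n$. The standard repair (and de Shalit's actual route) is to compare $F_f$ with a classical Lubin--Tate group $F_{f_0}$ for $(F,\varpi_F)$ after base change to $\O_{\widehat{F^{\ur}}}$: one produces $\theta \in \O_{\widehat{F^{\ur}}}^\times$ with $\theta^\phi/\theta = \varpi/\varpi_F$ by successive approximation, obtains an isomorphism $F_{f_0} \to F_f$ over $\O_{\widehat{F^{\ur}}}$ with linear term $\theta$, and then computes the action of $r_F(u\varpi_F^m)$ on $W_f^n$ by combining the known action on $W_{f_0}^n$ with the Frobenius action on $\theta$; the identity $\theta^{\phi^\delta}/\theta = \Nm_{L/F}(\varpi)/\varpi_F^\delta$ is what makes $r_F(\xi)$ act trivially and yields the norm group $\xi^\Z(1+\m_F^n)$. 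Without this descent step your argument for (4)--(5) does not close.
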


Writing $L_\xi = \bigcup_n L_\xi^n$, we see that $\Gal(L_\xi/L) \cong \O_F^\times$ and the group of universal norms in $F^\times$ coming from $L_\xi$ is $\xi^\Z$.  Moreover, we have an isomorphism $\Gal(L_\xi/ L) \to \O_F^\times$ who's inverse is $r_F|_{\O_F^\times}$ composed with the restriction $\Gal(F^\ab/F) \to \Gal(L_\xi/F)$.

% formal groups and ring class field towers
\subsection{Relative Lubin-Tate groups and ring class field towers}

Now let $v$ be a place of $H$ above $p$ and above the prime $\p$ of $K$.  For each $j \geq 1$, write $H_{j,w}$ for the completion of the ring class field $H_{p^j}$ of conductor $p^j$ at the unique place $w = w(j)$ above $v$.  In particular, $H_{0,v} = H_v$.  If $\delta$ is the order of $\p$ in $\Pic(\O_K)$, then $H_v$ is the unramified extension of $K_\p \cong \Q_p$ of degree $\delta$.  Since $p$ splits in $K$, $H_{j,w}/H_v$ is totally ramified of degree $(p-1)p^{j-1}/u$, where recall $u = \#\O_K^\times /2$.  Moreover, $\Gal(H_{j,w}/H_v)$ is cyclic and $H_{j,w}$ is abelian over $\Q_p$.  We call $H_\infty = \bigcup_j H_{j,w}$ the local ring class field tower; it contains the anticyclotomic $\Z_p$-extension of $K_\p$.  To ease notation and to recall the notation of the previous section, we write $L = H_v$.               

\begin{proposition}\label{unif}
Write $\p^\delta = (\pi)$ for some $\pi \in \O_K$.  Then $H_\infty$ is contained in the field $L_\xi$ attached to the Lubin-Tate group relative to the extension $L/\Q_p$ with parameter $\xi = \pi/\bar\pi$ in  $K_\p \cong \Q_p$.  If $\O_K^\times = \{\pm 1\}$, then $H_\infty = L_\xi$.  
\end{proposition}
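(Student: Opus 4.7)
The plan is to realize both $H_\infty$ and $L_\xi$ as abelian extensions of $F := K_\p \cong \Q_p$ (both are abelian over $L$ as well: for $L_\xi$ by Proposition \ref{relLT}, and for $H_\infty$ because each finite layer $H_{p^j}/K$ is abelian, hence so is its completion $H_{j,w}/F$) and then to compare them via local class field theory for $F$. Proposition \ref{relLT}(5) already identifies the subgroup $U_{L_\xi} \subseteq F^\times$ cut out by $L_\xi$ as $\bigcap_n \xi^\Z(1+\m_F^n) = \xi^\Z$. So the main task is to identify the corresponding subgroup $U_{H_\infty}$ for $H_\infty$.

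By global class field theory for $K$, the finite layer $H_{p^j}/K$ corresponds to the open subgroup $K^\times \cdot \hat\O_{p^j}^\times \subset \A_{K,f}^\times$. By the standard compatibility of global and local reciprocity at $\p$, the subgroup of $F^\times$ cut out by $H_{j,w}/F$ is $F^\times \cap K^\times \hat\O_{p^j}^\times$. Passing to the limit, using that $\bigcap_j (\O_{p^j}\otimes \Z_p)^\times = \Delta(\Z_p^\times)$ (the diagonal in $\O_{K,\p}^\times \times \O_{K,\bar\p}^\times \cong \Z_p^\times \times \Z_p^\times$), one finds $U_{H_\infty} = F^\times \cap K^\times U_\infty$, where
\[U_\infty := \Big(\prod_{q \neq \p,\bar\p} \O_{K,q}^\times\Big) \cdot \Delta(\Z_p^\times).\]

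The computational heart is to evaluate this intersection explicitly. An element $a = ku$ in it (with $k \in K^\times$, $u \in U_\infty$) has $a_q = 1$ for all $q \neq \p$; a place-by-place analysis forces $v_q(k) = 0$ for all $q \neq \p$ (the diagonal constraint at $p$ handling the $\bar\p$-component), so $(k) = \p^m$. Since $\p$ has order $\delta$ in $\Pic(\O_K)$, we must have $k = \pi^n w$ for some $n \in \Z$ and $w \in \O_K^\times$. A short calculation using $\xi = \pi/\bar\pi$ and $\iota_{\bar\p} = \iota_\p \circ \tau$ then gives
\[a_\p = k_\p/k_{\bar\p} = \xi^n \cdot (w/\bar w).\]
Hence $U_{H_\infty} = \xi^\Z \cdot \mu^1$, where $\mu^1 := \{w/\bar w : w \in \O_K^\times\} \subset F^\times$. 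The map $w \mapsto w/\bar w$ on $\O_K^\times$ has kernel $\{\pm 1\}$, so $\mu^1$ is cyclic of order $u = |\O_K^\times|/2$. The inclusion $\xi^\Z \subseteq \xi^\Z \mu^1$ of norm subgroups translates (under local class field theory, where smaller subgroup corresponds to larger field) into $H_\infty \subseteq L_\xi$; equality holds iff $\mu^1 = \{1\}$, i.e., iff $\O_K^\times = \{\pm 1\}$. The main subtlety lies in the place-by-place computation of $F^\times \cap K^\times U_\infty$ and in justifying the passage from the $\hat\O_{p^j}^\times$ to $U_\infty$ in the limit, where one must carefully track the diagonal constraint at $p$.
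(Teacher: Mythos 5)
Your proposal is correct and takes essentially the same route as the paper: both identify the norm subgroup of $H_\infty$ in $\Q_p^\times$ as $\xi^\Z$ times the image of $\O_K^\times$ under $w \mapsto w/\bar w$ (your $\mu^1$ coincides with the paper's $\mu_K^2$, since $\bar w = w^{-1}$ for a root of unity), using the kernel $K^\times \A_{K,\infty}^\times \hat\O_{p^j}^\times$ of the global Artin map together with local--global compatibility, and then compare with the subgroup $\xi^\Z$ attached to $L_\xi$ by Proposition \ref{relLT}(5). The paper merely packages the same computation as two separate verifications (that $\xi$ is a universal norm, and that the only unit universal norms are those in $\mu_K^2$) rather than as a single evaluation of $F^\times \cap K^\times U_\infty$.
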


\begin{remark}
Note that there are other natural Lubin-Tate groups relative to $L/\Q_p$ coming from the class field theory of $K$, namely the formal groups of elliptic curves with complex multiplication by $\O_K$.  These formal groups will have different parameters however, as can be seen from the discussion in \cite[II.1.10]{dS}.    
\end{remark}

\begin{proof}
By $(5)$ of Proposition \ref{relLT}, it is enough to prove that $H_\infty$ is the subfield of $\Q_p^\ab$ corresponding to the subgroup $(\pi/\bar\pi)^\Z\cdot \mu_K^2$ under local class field theory.  First we show that $(\pi/\bar\pi)$ is norm from every $H_{j,w}$.  Using the compatibility between local and global reciprocity maps, this will follow if the idele (with non-trivial entry in the $\p$ slot) 
\[(\ldots 1\, , 1\, , \pi/\bar\pi\, , 1,\,  1\,, \ldots) \in \A^\times_K\]
is in the kernel of the reciprocity map 
\[r_j: \A^\times_K/K^\times \to \Gal(K^\ab/K) \to \Gal(H_{p^j}/K),\]
for each $j$.  Since the kernel of $r_j$ is $K^\times\A_{K,\infty}^\times\hat \O_{p^j}^\times$, it is enough to show that
\[(\ldots 1/\pi\, , 1/\pi\, , 1/\bar\pi\,, 1/\pi\,, 1/\pi\,, \ldots) \in \hat\O_{p^j}^\times.\] 
This is clear at all primes away from $p$ since $\pi$ is a unit at those places.  At $p$, it amounts to showing that 
$(1/\bar \pi, 1/\pi) \in K_\p \times K_{\bar \p}$ lands in the diagonal copy of $\Z_p$ under the identification 
$K_\p \times K_{\bar \p} \cong \Q_p \times \Q_p$, and this is also clear.    

Since $L/\Q_p$ is unramified of degree $\delta$ and $\xi =\pi/\bar\pi$ has valuation $\delta$, it remains to prove that the only units in $\Q_p$ which are universal norms for the tower $H_\infty/\Q_p$ are those in $\mu_K^2$.  But by the same argument as above, the only way $\alpha \in \Z_p^\times$ can be a norm from every $H_{j,w}$ is if $\alpha \zeta = \bar \zeta$ for some global unit $\zeta \in K$.  But then $\zeta$ is a root of unity and $\alpha = \zeta^{-1}\bar\zeta = \zeta^{-2}$, so $\alpha$ is in $\mu_K^2$.  Conversely, it's clear that each $\zeta \in \mu_K^2$ is a universal norm.     
\end{proof}

\begin{remark}
Since we are assuming $K$ has odd discriminant, the equality $H_\infty = L_\xi$ holds unless $K = \Q(\mu_3)$.  For ease of exposition we will assume $K \neq \Q(\mu_3)$ for the rest of this section; the modifications needed for the case $K = \Q(\mu_3)$ are easy enough.       
\end{remark}

We will need one more technical fact about the relative Lubin-Tate group $F_f$ cutting out $H_\infty$.  Let $\chi_\xi: \Gal(\bar L/L) \to \Z_p^\times$, be the character giving the Galois action on the torsion points of $F_f$.  We let $\Q_p(\chi_\xi)$ denote the 1-dimensional $\Q_p$-vector space endowed with the action of $\Gal(\bar L/L)$ determined by $\chi_\xi$, and we denote by $D_\cris(\Q_p(\chi_\xi))$ the usual filtered $\phi$-module contravariantly attached to the $\Gal(\bar L/L)$-representation $\Q_p(\chi_\xi)$ by Fontaine.   

\begin{proposition}\label{LTcrys}
The representation $\Q_p(\chi_\xi)$ is crystalline and the frobenius map on the $1$-dimensional $L$-vector space $D_\cris(\Q_p(\chi_\xi))$ is given by multiplication by $\xi$.  
\end{proposition}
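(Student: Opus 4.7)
The plan is to realise $\Q_p(\chi_\xi)$ as the rational Tate module of the $p$-divisible group $G$ coming from the formal group $F_f$ over $\O_L$, apply the comparison theorem for $p$-divisible groups to deduce crystallinity, and then compute the crystalline Frobenius directly from the Dieudonn\'e module of the special fiber $G_k$, using that $f$ itself serves as a lift of the relative Frobenius.

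More precisely, $F_f$ defines a connected, one-dimensional $p$-divisible group $G/\O_L$ of height one. Its $p$-adic Tate module $V_p(G)$ is, by the very definition of $\chi_\xi$ as the Galois character on the torsion points of $F_f$, a one-dimensional $\Q_p$-vector space on which $\Gal(\bar L/L)$ acts through $\chi_\xi^{\pm 1}$, the sign reflecting the normalisation $\sigma_u\omega = [u^{-1}]_f\omega$ fixed in Proposition \ref{relLT}. Since $G$ has good reduction, the theorem of Fontaine--Messing, in the form extended to arbitrary $p$-divisible groups by Faltings, implies that $V_p(G)$ is crystalline; hence so is $\Q_p(\chi_\xi)$, which settles the first assertion.

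For the Frobenius, the crystalline--\'etale comparison for $G$ identifies (a Tate twist of) $D_\cris(V_p(G)^\vee)$ with the contravariant Dieudonn\'e module $\mathbb{D}(G_k) \otimes_{W(k)} L$, which is a free $L$-module of rank one, since $W(k) = \O_L$ (recall $L/\Q_p$ is unramified of degree $\delta$) and $G_k$ has height one. The crystalline Frobenius $\phi$ on this module is $\sigma$-semilinear, and its $\delta$-fold iterate $\phi^\delta$ is $L$-linear because $\sigma^\delta = \id$ on $L$; it is induced by any $\delta$-iterated lift of the relative Frobenius on $G_k$. The key observation is that $f$ itself is such a lift, because the congruence $f(X) \equiv X^q \pmod{\m_L}$ identifies $f$ modulo $\m_L$ with the relative Frobenius $G_k \to G_k^{(p)}$. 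Iterating,
\[ f^{(\delta)} := f^{\phi^{\delta-1}} \circ \cdots \circ f^\phi \circ f \colon F_f \longrightarrow F_f^{\phi^\delta} = F_f \]
is an honest endomorphism of $F_f$ whose leading coefficient is the telescoping product $\phi^{\delta-1}(\varpi) \cdots \phi(\varpi)\,\varpi = \Nm_{L/F}(\varpi) = \xi$. Hence $f^{(\delta)} = [\xi]_f$, and by functoriality of the Dieudonn\'e module this endomorphism acts on $\mathbb{D}(G_k) \otimes L$ as multiplication by $\xi \in \O_F = \Z_p$, giving the stated formula for the Frobenius.

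I expect the main technical obstacle to be keeping track of the various sign and duality conventions: contravariant versus covariant Dieudonn\'e module, whether $\chi_\xi$ matches the Galois action on $V_p(G)$ or on its dual, and whether the proposition's ``multiplication by $\xi$'' refers to the semilinear operator $\phi$ itself or to the $L$-linear $\phi^\delta$. Once these normalisations are harmonised, the calculation above yields exactly the scalar $\xi$ and not $\xi^{-1}$ or $\varpi$.
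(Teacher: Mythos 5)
Your proof is correct, but it takes a genuinely different route from the paper's. The paper invokes Conrad's criterion \cite[Prop.\ B.4]{conrad}: a character $\psi$ of $\Gal(\bar L/L)$ is crystalline iff some algebraic homomorphism $\chi'\colon L^\times\to\Q_p^\times$ agrees with $\psi\circ r_L$ on $\O_L^\times$, and then the Frobenius equals $\psi(r_L(\varpi))\chi'(\varpi)^{-1}$; taking $\chi'=\Nm^{-1}$ and $\varpi$ with $\Nm_{L/\Q_p}(\varpi)=\xi$, the factor $\psi(r_L(\varpi))=1$ because $\xi$ generates the universal norms of $L_\xi/L$. You instead work geometrically with $G=F_f[p^\infty]$, getting crystallinity from the comparison theorem for $p$-divisible groups and the Frobenius from the Dieudonn\'e module via the telescoping identity $f^{\phi^{\delta-1}}\circ\cdots\circ f=[\xi]_f$ --- which is exactly the Lubin--Tate avatar of the paper's universal-norm computation. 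On the conventions you flag: your argument pins down the $L$-linear iterate $\phi^\delta$ as multiplication by $\xi$ (the $\sigma$-semilinear $\phi$ itself is only well defined up to $\sigma(\lambda)/\lambda$, i.e.\ up to norm-one elements), and this is the correct reading of the statement --- it is what Conrad's formula computes, and all that Proposition \ref{hodgetate} uses is that the eigenvalues have complex absolute value one. The variance bookkeeping also closes up as you assert: with the paper's contravariant $D_\cris$, one has $D_\cris(V_p(G))\cong\mathbb{D}(G_k)\otimes\Q_p$ Frobenius-compatibly with no Tate twist (test against $\mu_{p^\infty}$, where both sides give $p$), so the scalar is $\xi$ on the nose. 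Your route avoids Conrad's result as a black box at the cost of some Dieudonn\'e theory; the paper's is shorter given the citation.
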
   

\begin{proof}
This is presumably well known, but with a lack of reference we will verify this fact using \cite[Prop.\ B.4]{conrad}. There it is shown that $\Q_p(\chi_\xi)$ is crystalline if and only if there exists a homomorphism of tori $\chi' : L^\times \to \Q_p^\times$ which agrees with the restriction of $\chi_\xi \circ r_L$ to $\O_L^\times$.  In that case, frobenius on $D_\cris(\Q_p(\chi_\xi))$ is given by multiplication by $\chi_\xi(r_L(\varpi)) \cdot \chi'(\varpi)^{-1}$, where $\varpi$ is any uniformizer of $L$.\footnote{Note that we are using the contravariant $D_\cris$, whereas \cite{conrad} uses the covariant version.}  Combining (2) and (4) of Proposition \ref{relLT} with the commutativity of the following diagram
       
\[\begin{CD}
L^\times @>r_L>>  \Gal(L^\ab/L) \\
@V\Nm VV          @VV\res V  \\
\Q_p^\times  @>r_{\Q_p}>> \Gal(\Q_p^\ab/\Q_p),
\end{CD}\]
we see that $\chi' = \Nm^{-1}$ gives such a homomorphism, so the crystallinity follows.  Note that by construction $\chi_\xi: \Gal(L^\ab/L) \to \Z_p^\times$ factors through a character $\tilde\chi_\xi: \Gal(\Q_p^\ab/L) \to \Z_p^\times$.  So if we choose $\varpi$ to be such that $\Nm_{L/\Q_p}(\varpi) = \xi$, then 
\begin{align*}
\chi_\xi(r_L(\varpi)) &= \tilde \chi_\xi(r_{\Q_p}(\Nm(\varpi)))\\
&= \tilde\chi_\xi(r_{\Q_p}(\xi)) = 1.      
\end{align*} 
Thus, the frobenius is given by multiplication by $\chi'(\varpi)^{-1} = \Nm_{L/\Q_p}(\varpi) = \xi$.        
\end{proof}

% FIXING NEKOVAR's ERRORS
\subsection{Local heights at $p$ in ring class field towers}\label{fixes}
The proofs of both \cite[II.5.6]{Nek} and \cite[II.5.10]{Nek} mistakenly assert that $H_{j,w}$ contains the $j$-th layer of the cyclotomic $\Z_p$-extension of $\Q_p$ (as opposed to the anticyclotomic $\Z_p$-extension).  This issue first arises in the proofs of \cite[II.5.9]{Nek} and \cite[II.5.10]{Nek}.  We explain now how to adjust the proof of \cite[II.5.10]{Nek}; similar adjustments may be used to fix the proof of \cite[II.5.9]{Nek}.  Our approach is in the spirit of Nekov\'{a}\v{r}'s original argument, but uses extra results from $p$-adic Hodge theory to carry the argument through.  

Recall the setting of \cite[II.5.10]{Nek}: $x$ is the Tate vector corresponding to our (generalized) Heegner cycle $\e_B \e Y$, and $V = H^1_\et(\bar X_0(N),j_{0*}\AA)(1)$.  We have the Tate cycle
\[x_f = \sum_{m \in S} c_{f, m} T_m x \in Z(Y_0(N),H) \otimes_{\Q_p} L,\]
a certain linear combination (with coefficients $c_{f,m}$ living in a large enough field $L$) of $T_m x$ such that\[\Phi_T(x_f) = z_f \in H^1_f(H,V) \otimes_{\Q_p} L.\]  Moreover, each $m \in S$ satisfies $(m,pN) = 1$ and $r(m) = 0$, where $r(m)$ is the number of ideals in $K$ of norm $m$.  To fix the proof of \cite[II.5.10]{Nek}, we prove the following vanishing result for local heights at primes $v$ of $H$ above $p$. 
 
\begin{theorem}\label{vanishing}
%Suppose $m, n \geq 1$ are such that $r(m) = 0$, $(m,pN) = 1$ and $(n,pmN) = 1$. 
For each $j \geq 1$, let $h^\sigma_j \in Z_f(Y_0(N), H_{j,w})$ be a Tate vector supported on a point $y_j \in Y_0(N)$ corresponding to an elliptic curve $E_j$ such that $\End(E_j)$ is the order in $\O_K$ of index $p^j$.    Then 
\[ \lim_{j \to \infty} \langle x_f, N_{H_{j,w}/H_v}(h^\sigma_j) \rangle_v = 0.\]
 \end{theorem}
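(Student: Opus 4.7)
The plan is to express the local height $\langle x_f, N_{H_{j,w}/L}(h^\sigma_j) \rangle_v$ (with $L = H_v$) in explicit $p$-adic Hodge theoretic terms, and then use the Lubin--Tate description of the local ring class field tower from Propositions \ref{relLT}--\ref{LTcrys} together with Theorem \ref{crysmixed} to force vanishing in the limit. Because $V_{f,A,\ell}$ is ordinary (Theorem \ref{ord}) and crystalline at $v$ (Theorem \ref{AJ}), Nekov\'a\v{r}'s local height pairing on $H^1_f(L,V_{f,A,\ell})$ admits an explicit description in terms of the Bloch--Kato exponential, the canonical splitting of the Hodge filtration furnished by the ordinary subfiltration, and the restriction $\ell_L$ to $L^\times$ of our cyclotomic logarithm.

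I would first package the classes $\{N_{H_{j,w}/L}(h^\sigma_j)\}_{j\geq 1}$ into a compatible norm system along the tower $H_\infty = \bigcup_j H_{j,w}$. By Proposition \ref{unif}, $H_\infty$ coincides with the relative Lubin--Tate extension $L_\xi$ for $\xi = \pi/\bar\pi$, so this norm system has Galois-theoretic behavior controlled by the character $\chi_\xi$; Proposition \ref{LTcrys} then tells us that $\Q_p(\chi_\xi)$ is crystalline and that the crystalline Frobenius on $D_{\cris}(\Q_p(\chi_\xi))$ acts by multiplication by $\xi$. The main point is to use this fact to show that, in the limit $j \to \infty$, the image of $N_{H_{j,w}/L}(h^\sigma_j)$ in the Hodge-theoretic model of $H^1_f(L, V_{f,A,\ell})$ is controlled by the $\xi$-twist of the crystalline data attached to $V_{f,A,\ell}$.

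The vanishing should then follow from Theorem \ref{crysmixed}: since $\xi$ is not an eigenvalue of the crystalline Frobenius on $D_{\cris}(V_{f,A,\ell})$ (as one checks using the ordinary filtration, whose graded-piece Frobenius eigenvalues are explicit products of $\alpha_p(f)^{\pm 1}$ with the unit-root Frobenius on $D_{\cris}(W)$, of slopes determined by $r$ and $k$, none of which matches the slope of $\xi = \pi/\bar\pi$), the local height paired against the crystalline class $\Phi_T(x_f)$ must vanish in the limit. Heuristically, the Lubin--Tate direction is transverse to the Frobenius structure on $V_{f,A,\ell}$, and the cyclotomic logarithm $\ell_K$ cannot see the limiting norm class.

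The main obstacle is to establish Theorem \ref{crysmixed} in the form needed to accommodate the local system $\Aa$ on the integral model $\underline X_0(N)_\Lambda$ and the fact that the $h^\sigma_j$ are not flat sections but rather fiber-by-fiber data varying up the tower. This requires Faltings' $C_\cris$ comparison to transport the ordinary-crystalline structure on $D_{\cris}(V_{f,A,\ell})$ through the relative Lubin--Tate geometry, and constitutes the substance of what remains of this section. Granted Theorem \ref{crysmixed}, the deduction of Theorem \ref{vanishing} follows Nekov\'a\v{r}'s argument in \cite[II.5.10]{Nek} closely, the only real change being that the tower along which the limit is taken is the anticyclotomic Lubin--Tate tower governed by $\xi = \pi/\bar\pi$ rather than the cyclotomic tower governed by $p$ -- the error in \cite{Nek} being precisely that the ring class field tower was mistakenly identified with the cyclotomic one.
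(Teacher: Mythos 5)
There is a genuine gap: you never identify the actual mechanism that makes the limit zero, and the mechanism you do propose --- that $\xi$ is not a Frobenius eigenvalue on $D_\cris(V_{f,A,\ell})$, so the ``Lubin--Tate direction is transverse to the Frobenius structure'' --- is not the argument and does not, as stated, produce a proof. In the paper the vanishing is elementary once the right structural facts are in place: after using norm-compatibility of local heights to rewrite $\langle x_f, N_{H_{j,w}/H_v}(h^\sigma_j)\rangle_{v,\ell_v}$ as $\langle x_f, h^\sigma_j\rangle_{w,\ell_w}$ with $\ell_w = \log_p\circ N_{H_{j,w}/\Q_p}$, the local height is by definition $\log_p\bigl(N_{H_{j,w}/\Q_p}(r_w([E_w]))\bigr)$ where $r_w([E_w])$ is the class of the mixed extension in $H^1(H_{j,w},\Q_p(1))$. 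Crystallinity of the mixed extension (Lemma \ref{disjoint} plus Theorem \ref{crysmixed}) forces this class into $H^1_f(H_{j,w},\Q_p(1)) = \widehat{\O_{H_{j,w}}^\times}\otimes\Q_p$, i.e.\ into the \emph{units}; ordinarity together with Proposition \ref{hodgetate} bounds the denominators uniformly in $j$; and then Proposition \ref{relLT}(5) --- local class field theory for the relative Lubin--Tate tower, since $H_{j,w} = L^j_\xi$ --- says that norms of units from $H_{j,w}$ down to $\Q_p$ lie in $1+p^j\Z_p$, so the height lies in $p^{j-d}\Z_p$ and tends to $0$. None of this appears in your proposal: you do not say that the height is the logarithm of a norm of a unit, you do not invoke the shrinking of the unit norm groups up the totally ramified tower, and you do not address the bounded-denominators issue at all (which is the one place ordinarity and the Frobenius-eigenvalue computation genuinely enter, via $H^0(H_\infty,V)=0$ in Proposition \ref{hodgetate}, not via the pairing itself).

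You have also misassigned the role of Theorem \ref{crysmixed}: it is not a statement about eigenvalues of Frobenius failing to match $\xi$, but a crystallinity statement for $H^1(\bar X_0(N)-\bar S \rel \bar T, j_{0*}\Aa)(1)$ whose only use is to place the extension class in the Bloch--Kato finite part. Separately, the ``compatible norm system'' framing is off: the $h^\sigma_j$ are not assembled into a norm-compatible family; each height is estimated individually. What you do get right is the diagnosis of the error in \cite{Nek} --- the tower is the anticyclotomic relative Lubin--Tate tower with parameter $\xi = \pi/\bar\pi$, not the cyclotomic tower --- and the fact that Propositions \ref{relLT}, \ref{unif}, \ref{LTcrys} and Theorem \ref{crysmixed} are the inputs. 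But the deduction connecting those inputs to the vanishing is missing.
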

\begin{proof}
%Recall that 
%$$b^\sigma_{p^j} = N_{H_{j,w}/H_v}(h^\sigma_j),$$
%where $h^\sigma_j \in Z_f(Y_0(N), H_{j,w})$ is a Tate vector supported on a point $y_j \in Y_0(N)$ corresponding to an elliptic curve $E_j$ with CM by the order of index $p^j$.  
Recall that $E_j$ is a quotient of an elliptic curve $E$ with CM by $\O_K$ by a (cyclic) subgroup of order $p^j$ which does not contain either the canonical subgroup $E[\p]$ or its dual $E[\bar \p]$.  By the compatibility of local heights with norms \cite[II.1.9.1]{Nek}, we have
\begin{equation}\label{pairing} \left\langle x_f, N_{H_{j,w}/H_v}(h^\sigma_j)\right\rangle_{v, \ell_v} = \left\langle x_f, h^\sigma_j \right\rangle_{w, \ell_w},\end{equation}
where $\ell_w = \ell_v \circ N_{H_{j,w}/H_v}$.  Recall that we are assuming now that $\ell_K = \log_p \circ \lambda$, where $\l: \Gal(K_\infty/K) \to 1 + p\Z_p$ is the cyclotomic character.  Thus the local component $\ell_v : H_v^\times \to \Q_p$ of $\ell_K$ is $\ell_v = \log_p \circ N_{H_v/\Q_p}$, and 
\[\ell_w = \log_p \circ N_{H_{j,w}/\Q_p}.\]  

We have seen that the ring class field tower $H_\infty$ is cut out by a relative Lubin-Tate group.  In fact, it follows from the results in the previous sections that $H_{j,w} = L^j_\xi$, where $L = H_v$ and $\xi = \pi/\bar\pi$ as before.  Let $E$ be the mixed extension used to compute the height pairing of $x_f$ and $h^\sigma_j$ (as in \cite[II.1.7]{Nek}), and let $E_w$ be its restriction to the decomposition group at $w$.  Assume that 
\[\mbox{$E_w$ is a crystalline representation of $\Gal(\bar H_{j,w}/H_{j,w})$}.\]  
Then by definition of the local height, we have 
\begin{align*}
\left\langle x_f, h^\sigma_j \right\rangle_{w, \ell_w} &= \ell_w(r_w([E_w]))\\
&=\log_p\left(N_{H_{j,w}/\Q_p}(r_w([E_w]))\right).
\end{align*}
where $r_w([E_w])$ is an element of $\widehat{\O_{H_{j,w}}^\times} \otimes_{\Z_p} \Q_p$.  In fact, the ordinarity of $f$ allows Nekov\'{a}\v{r} to ``bound denominators"; i.e. he shows  
\[p^{-d_j}\left\langle x_f, h^\sigma_j \right\rangle_{w, \ell_w}  \in \log_p\left(N_{H_{j,w}/\Q_p}\left(\widehat{\O_{H_{j,w}}^\times}\right)\right) .\]
for some integer $d_j$.  Indeed, this follows from our assumption that $E_w$ is crystalline and the proofs in \cite[II.1.10, II.5.10]{Nek}; note that $H^1_f(H_{j,w},\Z_p(1)) = \widehat{\O_{H_{j,w}}^\times}$.  Moreover, the $d_j$ are uniformly bounded as $j$ varies.  Nekov\'{a}\v{r}'s proof of this last fact does not quite work, but we fix this issue in Proposition \ref{hodgetate} below.  Let us write $d = \sup_j d_j$.  By Proposition \ref{relLT},  we have
\[p^{-d}\left\langle  x_f, h^\sigma_j \right\rangle_{w, \ell_w}  \in \log_p(1 + p^j\Z_p) \subset p^j\Z_p.\]
The theorem would then follow upon taking the limit as $j \to \infty$.

It therefore remains to show that $E_w$ is crystalline. First we need a lemma. 
%Thus Theorem \ref{vanishing} will follow if we can show that the Kummer class in $H^1(H_{j,w},\Q_p(1)) \cong \hat H^\times_{j,w} \otimes_{\Z_p} \Q_p$ obtained from the canonical mixed extension attached to $T_mT_nx$ and $h_j^\sigma$ (as in the definition of the local $p$-adic heights \cite[II.1.7]{Nek}) is a unit, i.e.\ if the Kummer class lies in $\widehat{\O_{H_{j,w}}^\times} \otimes_{\Z_p} \Q_p$.  Indeed, since $\log_p(1 + p^j\Z_p) \subset  p^j\Z_p$, upon taking the limit and keeping track of denominators (exactly as in \cite[II.5.6]{Nek}), we would conclude that $\lim_{j \to \infty} \langle T_mT_n x, b^\sigma_{p^j} \rangle_v = 0$.  
%Recall that this mixed extension is a subquotient of $H^1(\bar X_0(N) - \bar S\, \rel  \bar T, j_{0*}\AA)(1)$, where $S$ and $T$ are the finite sets of points supporting $T_mT_nx$ and $h_j^\sigma$ respectively.  To prove that the Kummer class is a unit, we use the following lemma.

\begin{lemma}\label{disjoint}
Let $m \in S$ and $j$ be as above.  Then the supports of $T_m x$ and $b^\sigma_{p^j}$ are disjoint on the generic and special fibers of the integral model $\X$ of $X_0(N)$.   
\end{lemma}  
\begin{proof}
Let $z \in Y_0(N)(\bar\Q_p)$ be in the support of $T_m x$ and let $y$ be the Heegner point supporting the Tate cycle $x$.  Thinking of these points as elliptic curves via the moduli interpretation, there is an isogeny $\phi: y \to z$ of degree prime to $p$ since $(p,m) = 1$.  Recall $p$ splits in $K$, so that $y$ has ordinary reduction $y_s$ at $v$.  Since $\End(y) \cong \O_K \cong \End(y_s)$, $y$ is a Serre-Tate canonical lift of $y_s$.  As $\phi$ induces an isomorphism of $p$-divisible groups, $z$ is also a canonical lift of its reduction.  On the other hand, the curve $E_j$ supporting $h_j^\sigma$ has CM by a non-maximal order of $p$-power index in $\O_K$ and is therefore not a canonical lift of its reduction.  Indeed, the reduction of $E_j$ is an elliptic curve with CM by the full ring $\O_K$ as it obtained by successive quotients of $y_s$ by either the kernel of Frobenius or Verschiebung.  This shows that $T_mx$ and $b^\sigma_{p^j}$ have disjoint support in the generic fiber.

%Now note that $T_mT_n = T_{mn}$ and $r(mn) = 0$, since $r(m) =0$ and $(m,n) =1$.  
By \cite[III.4.3]{GZ}, the divisors $T_{mn} y$ and $y^\tau$ are disjoint in the generic fiber, for any $\tau \in \Gal(H/K)$.  Since all points in the support of these divisors are canonical lifts, the divisors must not intersect in the special fiber either.  But we saw above that the special fiber of $E_j$ is a Galois conjugate of the reduction of $y$, so $E_j$ and $T_my$ are disjoint on the special fiber as well.                 
\end{proof}

Next we note that $T_m x$ is a sum $\sum d_i$, where each $d_i$ is supported on a single closed point $S$ of $Y_0(N)/H_{j,w}$.  Using norm compatibility once more and base changing to an extension $\F/H_{j,w}$ which splits $S$, we may assume that $S \in Y_0(N)(\F)$.  

It then suffices to show that the mixed extension $E_w'$ corresponding to $d_i$ and $h^\sigma_j$ is crystalline.  Recall from \cite[II.2.8]{Nek} that this mixed extension is a subquotient of 
\[H^1(\bar X_0(N) - \bar S\, \rel  \bar T, j_{0*}\AA)(1),\] 
where $T = y_j$ is the point supporting $h_j^\sigma$.  So it is enough to show that this cohomology group is itself crystalline.  Finally, this follows from combining the previous lemma with the following result.
\end{proof} 

%It suffices to show that the Kummer class in 
%\[H^1(\F,\Q_p(1)) \cong \hat \F^\times \otimes_{\Z_p} \Q_p\]
% corresponding to the canonical mixed extension $E$ of $d_i$ and $h^\sigma_j$ is a unit.  By \cite[II.1.7]{Nek}, it is enough to show that $E$ is itself crystalline, since then the corresponding Kummer extension lies in 
% \[H^1_f(\F,\Q_p(1)) \cong \O_{\F}^\times \hat{\otimes} \Q_p \subset \hat \F^\times \otimes_{\Z_p} \Q_p.\]  As $E$ is a subquotient of $H^1(\bar X_0(N)  - \bar S \rel \bar T, j_{0*}\AA)(1)$, its crystallinity follows from the next theorem,  concluding the proof of Theorem \ref{vanishing}.  
%\end{proof}

\begin{theorem}\label{crysmixed}
Suppose $\F$ is a finite extension of $\Q_p$ and let $S,T \in Y_0(N)(\F)$ be points with non-cuspidal reduction and which do not intersect in the special fiber.  Then $H^1(\bar X_0(N)  - \bar S \rel \bar T, j_{0*}\AA)(1)$ is a crystalline representation of $G_\F$.      
\end{theorem}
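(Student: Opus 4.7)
The plan is to lift the situation via the projector $\e_B\e'$ to the smooth proper total space $X = W \times A^\ell$, where Faltings' $C_{\mathrm{crys}}$ theorem becomes applicable. Exactly as in Theorem \ref{AJ}(3), the cohomology group $H^1(\overline{X_0(N)} - \bar S \rel \bar T, j_{0*}\Aa)(1)$ should be realized as a direct summand, cut out by $\e_B\e'$, of the corresponding cohomology of $X$ with relative supports at the fibers $X_S$ and $X_T$ of $X\to X_0(N)$ over $S$ and $T$. Since crystallinity passes to subrepresentations, it suffices to show that
$$\e_B\e'\, H^{2r+2k-1}(\bar X - \bar X_S \rel \bar X_T, \Q_p)(r+k)$$
is a crystalline $G_\F$-representation.

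After replacing $\F$ by a finite unramified extension if necessary (which preserves crystallinity), I would take the smooth proper integral model $\mathcal{X}/\O_\F$ of $X$ built from the Deligne-Scholl model of $W$ (available since $p \nmid N$) and the N\'eron model of $A$. The Zariski closures $\mathcal{X}_S, \mathcal{X}_T \subset \mathcal{X}$ are smooth and proper over $\O_\F$, and the hypothesis that $S$ and $T$ do not meet on the special fiber of $\underline X_0(N)$ forces $\mathcal{X}_S \cap \mathcal{X}_T = \emptyset$ by flatness over $\O_\F$. Next, blow up $\mathcal{X}$ along $\mathcal{X}_S$ to obtain a smooth proper $\tilde{\mathcal{X}}/\O_\F$ with exceptional divisor $E$; since $\mathcal{X}_S\cap\mathcal{X}_T = \emptyset$, the strict transform of $\mathcal{X}_T$ is still $\mathcal{X}_T$, and it is disjoint from $E$, while $\tilde{\mathcal{X}} - E \cong \mathcal{X} - \mathcal{X}_S$. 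This replaces the removal of the codimension-one subscheme $\mathcal{X}_S$ by the removal of a smooth divisor.

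With this setup, the cohomology of the open pair $(\tilde{\mathcal{X}}_\F - E_\F, \mathcal{X}_{T,\F})$ is accessible via Faltings' $C_{\mathrm{st}}$ comparison applied to the log smooth proper pair $(\tilde{\mathcal{X}}, E)$ together with the closed embedding $\mathcal{X}_T \hookrightarrow \tilde{\mathcal{X}} - E$. This gives an a priori semistable representation whose monodromy is determined by the Gysin/residue contribution from $E$. The remaining point is to verify that $\e_B\e'$ kills this monodromy: the Deligne-Scholl projector $\e_W$ selects the primitive weight-$(2r-1)$ part, which is orthogonal to Gysin classes pulled back from the CM fiber $\mathcal{X}_S$ (these classes sit in the ``wrong'' cohomological weight relative to the image of $\e_B\e'$). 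After projection one therefore obtains a crystalline representation, completing the argument.

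The hard part is making the monodromy-vanishing step rigorous, since extensions of crystalline representations are generally not crystalline and one needs the geometric semistable structure to be compatible with the projector. A cleaner alternative would be to invoke a direct ``relative'' $C_{\mathrm{crys}}$ theorem for the triple $(\mathcal{X}, \mathcal{X}_S, \mathcal{X}_T)$ with $\mathcal{X}_S, \mathcal{X}_T$ disjoint smooth closed subschemes, bypassing the log structure entirely. In either formulation, the essential geometric input is the disjointness $\mathcal{X}_S\cap\mathcal{X}_T=\emptyset$ on the special fiber, which is why the hypothesis on $S,T$ appears.
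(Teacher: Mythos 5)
Your reduction to the total space $X$ and your identification of the disjointness of $S$ and $T$ on the special fiber as the essential geometric input are both on target, but the core of your argument misplaces the difficulty, and the mechanism you propose cannot work. A small point first: $\mathcal{X}_S$ is already a smooth relative divisor in $\mathcal{X}$ (a fiber of $\mathcal{X} \to \underline X_0(N)_{\O_\F}$ over a section), so blowing up along it is an isomorphism; one simply puts the horizontal log structure on $(\mathcal{X}, \mathcal{X}_S \cup \mathcal{X}_T)$. Because that log structure is horizontal and the total space has good reduction, the comparison theorem in play is crystalline from the start --- there is no monodromy operator to kill, so the ``monodromy-vanishing step'' you flag as the hard part is a phantom difficulty. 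Worse, the mechanism you propose --- that $\e_B\e'$ is orthogonal to the Gysin contributions from the fiber over $S$ --- would, if correct, kill the graded pieces $H^0(\bar S, \Aa)$ and $H^0(\bar T, \Aa)(1)$ of the mixed extension after projection. It does not: these pieces contain the Tate vectors $a(Y^\a)$, and the nontriviality of the resulting extension classes is exactly what makes the height pairing nonzero. The theorem asserts that these genuinely nonsplit extensions are nevertheless crystalline, i.e.\ that their classes land in $H^1_f$ of the graded quotients; no projector argument can reduce this to the split case.

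The real content of the proof is the treatment of the relative cohomology --- the puncture at $S$ and the support along $T$ simultaneously --- which your proposal does not engage with: knowing that $H^0(\bar T,\Aa)(1)$ and $H^1(\bar X_0(N)-\bar S, j_{0*}\Aa)(1)$ are each crystalline says nothing about the extension between them. The paper's route is: (i) Olsson's version of Faltings' comparison for crystalline sheaves on the open curve, with the partial supports encoded by an isocrystal twist $J_S$ in the log-convergent topos, which identifies $D_\cris(\V)$ with a log-convergent cohomology group; (ii) Shiho's comparison of log-convergent with rigid cohomology, because rigid cohomology satisfies the Bloch--Ogus axioms and hence admits the two Gysin sequences relating $\V$ to $H^1(\bar X_0(N), j_{0*}\Aa)(1)$, $H^0(\bar S, \Aa)$ and $H^0(\bar T, \Aa)(1)$; and (iii) a dimension count $\dim_{F_0} D_\cris(\V) = \dim_{\Q_p}\V$, which uses the disjointness of the reductions of $S$ and $T$ so that the rigid-cohomology Gysin sequences have the same graded pieces as the \'etale ones. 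Your ``cleaner alternative'' of a relative $C_\cris$ theorem for the triple is essentially what this machinery delivers, but in your proposal it remains an appeal to an unproved statement rather than an argument.
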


\begin{remark}
Suppose $F$ is a $p$-adic field and $X/\Spec \O_F$ is a smooth projective variety of relative dimension $2k-1$.  If $Y, Z \subset X$ are two (smooth) subvarieties of codimension $k$ which do not intersect on the special fiber, then one expects that $H^{2k-1}(\bar X_F - \bar Y_F \rel \bar Z_F, \Q_p(k))$ is a crystalline representation of $G_F$.  The theorem above proves this for cycles sitting in fibers of a map $X \to C$ to a curve.  The general case should follow from the machinery developed in the recent preprint \cite{niziol}.   
\end{remark}

\begin{proof}
Write $\V = H^1(\bar X_0(N) - \bar S \rel T, j_{0*}\AA)(1)$.  The sketch of the proof is as follows.  Faltings' comparison isomorphism \cite{Falt} identifies $D_\cris(\V)$ with the crystalline analogue of $\V$, which we will refer to (in this sketch)  as $H^1_\cris(X - S \rel T, j_{0*}\AA)$.  The dimension of $\V$ is determined by the standard exact sequences
\begin{equation}\label{gysin}
0 \to H^0(\bar T,  j_{0*}\AA)(1) \to \V \to H^1(\bar X -\bar S,  j_{0*}\AA)(1) \to 0\end{equation}
\[0 \to H^1(\bar X,  j_{0*}\AA)(1) \to H^1(\bar X - \bar S,  j_{0*}\AA)(1) \to H^0(\bar S,  j_{0*}\AA) \to 0\]  
Similar exact sequences should hold in the crystalline theory (i.e.\ with $H^1$ replaced by $H^1_\cris$ everywhere) since $S$ and $T$ reduce to distinct points on the special fiber.  Using the known crystallinity of $H^1(\bar X, j_{0*}\AA)(1)$, $H^0(\bar T,  j_{0*}\AA)(1)$, and  $H^0(\bar S,  j_{0*}\AA)$ (the latter two because the fibers of $X \to X(N)$ above $S$ and $T$ have good reduction), we conclude that 
\[\dim_{\Q_p} \V = \dim_{F_0} H^1_\cris(X - S \rel T, j_{0*}\AA),\] i.e. that $\V$ is crystalline.  To turn this sketch into a proof, we need to say explicitly what $H^1_\cris(X - S \rel T, j_{0*}\AA)$ is.  Note that the usual crystalline cohomology is not a good candidate because it is not usually finite dimensional unless the variety is smooth and projective.  

Let us describe in more detail the comparison isomorphism which we invoked above.  The main result of \cite{Falt} concerns the cohomology of a smooth projective variety with trivial coefficients.  In our setting, however, we deal with cohomology of an affine curve with partial support along the boundary and with non-trivial coefficients.  The proof of the comparison isomorphism in this more complicated situation is sketched briefly in \cite{Falt} as well, but we follow the exposition \cite{ols}, where the modifications we need are explained explicitly  and in detail.  

Let $R$ be the ring of integers of $\F$ and set $V = \Spec(R)$.  Let $X/V$ be a smooth projective curve and let $S,T \in X(V)$ be two rational sections which we think of as divisors on $X$.  We assume that $S$ and $T$ do not intersect, even on the closed fiber.  Set $D = S \cup T$ and $X^o = X - D$.  The divisor $D$ defines a log structure $M_X$ on $X$ and we let $(Y,M_Y)$ be the closed fiber of $(X,M_X)$.  We use the log-convergent topos $((Y,M_Y)/V)_\conv$ to define the `crystalline' analogue of $\V$.  There is an isocrystal $J_S$ on $((Y,M_Y)/V)_\conv$ which is \'etale locally defined by the ideal sheaf of $S$; see \cite[\S13]{ols} for its precise definition and for more regarding the convergent topos.

\begin{theorem}[Faltings, Olsson]
Let $L$ be a crystalline sheaf on $X^o_\F$ associated to a filtered isocrystal $(F, \varphi_F, Fil_\FF)$.  Then there is an isomorphism
\begin{equation}
B_\cris(\bar V) \otimes_\F H^1(((Y,M_Y)/V)_\conv, F \otimes J_S) \to B_\cris(\bar V) \otimes_{\Q_p} H^1(\bar X - \bar S \rel \bar T, L).
\end{equation}     
\end{theorem}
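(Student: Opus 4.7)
The plan is to invoke the general framework of Faltings' comparison theorem as developed in Olsson's monograph \cite{ols}, which treats smooth log-schemes with crystalline coefficient systems, and to handle the partial support condition along $T$ (and the twist by $J_S$) via parallel localization triangles that are matched by the comparison.

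First, I would equip $(X, M_X)$ with the log structure determined by the normal-crossings divisor $D = S \cup T$, so that the morphism $(X, M_X) \to V$ is log smooth. By the logarithmic form of Faltings' comparison isomorphism, as worked out in \cite{ols} for log-smooth pairs with crystalline coefficients, one obtains a canonical $B_\cris$-linear, Galois- and Frobenius-equivariant isomorphism
\[ B_\cris(\bar V) \otimes_\F H^i(((Y,M_Y)/V)_\conv, F) \xrightarrow{\sim} B_\cris(\bar V) \otimes_{\Q_p} H^i(\bar X^o, L) \]
for every $i$. This is the comparison for the open curve $X^o$ \emph{without} support conditions; it is proved by globalizing Faltings' almost-\'etale local calculation and uses the log smoothness of $(X, M_X)$ in an essential way.

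Next, I would introduce the partial support along $T$ and the twist by $J_S$ via two parallel distinguished triangles. On the \'etale side one has
\[ R\Gamma(\bar X - \bar S \rel \bar T, L) \to R\Gamma(\bar X - \bar S, L) \to R\Gamma(\bar T, L) \xrightarrow{+1}, \]
realizing the relative cohomology as the fiber of restriction to $T$. On the log-convergent side, the short exact sequence of coefficient sheaves
\[ 0 \to F \otimes J_S \to F \to F|_{S} \to 0 \]
induces a corresponding distinguished triangle for the cohomology groups of $F \otimes J_S$, $F$, and $F|_{S}$ on $((Y, M_Y)/V)_\conv$. Since $S$ and $T$ are disjoint sections reducing to distinct smooth points of $Y$, the local analysis near $S$ and near $T$ decouples: a log-convergent Poincar\'e/residue duality in the one-dimensional fibral setting identifies the $F|_S$-cohomology with the \'etale $R\Gamma(\bar T, L)$ up to the appropriate Tate twist, while the ``absolute'' open terms correspond via the comparison of the first step.

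The main obstacle, and the bulk of the work, is verifying that Faltings' comparison of the first step is compatible with these two triangles, i.e.\ that the induced diagram of distinguished triangles after tensoring with $B_\cris(\bar V)$ commutes. This reduces to a purely local boundary calculation along each of $S$ and $T$, identifying the \'etale Gysin/residue map with the quotient $F \twoheadrightarrow F|_S$ on the convergent side. This is precisely the type of local matching that Olsson's framework provides for smooth divisors in a log-smooth pair. Once compatibility is verified, applying the five lemma to the two $B_\cris(\bar V)$-tensored triangles, and using that the comparison is already established for $H^\ast(\bar X^o, L)$ (from the first step) and for $H^\ast(\bar T, L)$ (which reduces to the classical smooth-proper comparison, since the fiber $X_T$ has good reduction and $L|_T$ is crystalline), yields the desired isomorphism and completes the proof.
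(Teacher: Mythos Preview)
The paper does not prove this theorem: it is stated as a result due to Faltings and Olsson, with pointers to \cite{Falt} and especially \cite{ols}, where the partial-support version of the comparison isomorphism is worked out in detail. So there is no ``paper's own proof'' to compare your proposal against; the paper uses this comparison as a black box and then applies it (together with Shiho's convergent--rigid comparison and the Bloch--Ogus axioms for rigid cohomology) to carry out the dimension count that proves Theorem~\ref{crysmixed}.

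That said, your sketch has a genuine mismatch that would need to be repaired before it could stand as an argument. In your first step you establish the comparison for $H^i(\bar X^o, L)$ with $X^o = X - S - T$, but the middle term of your \'etale triangle is $R\Gamma(\bar X - \bar S, L)$, which is \emph{not} $R\Gamma(\bar X^o, L)$; so ``the absolute open terms correspond via the comparison of the first step'' is not literally true as written. More seriously, you assert that the $F|_S$-cohomology matches $R\Gamma(\bar T, L)$: this swaps the roles of $S$ and $T$. The sheaf $J_S$ is the ideal sheaf of $S$, and the short exact sequence $0 \to F \otimes J_S \to F \to F|_S \to 0$ encodes information near $S$, not near $T$. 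In Olsson's setup the log structure from $D = S \cup T$ already builds in ``openness'' along both $S$ and $T$, and the twist by $J_S$ is what introduces the partial compact-support condition; the precise dictionary between $J_S$ on the convergent side and the pair $(-S, \rel T)$ on the \'etale side is exactly what \cite[\S13]{ols} is devoted to, and it is not simply the naive residue triangle you wrote down. A correct d\'evissage would either (a) compare directly with the framework in \cite{ols}, where the mixed open/compact-support case is handled intrinsically, or (b) run \emph{two} compatible Gysin/residue triangles, one at $S$ and one at $T$, keeping careful track of which divisor is ``removed'' and which is ``rel'', together with the appropriate Tate twists.
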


As $L = j_{0*}\AA$ is crystalline \cite[6.3]{Falt}, we may apply this theorem in our situation.  Taking Galois invariants, we conclude that $D_\cris(\V) = H^1(((Y, M_Y)/V)_\conv, F \otimes J_S)$.  To complete the proof of Theorem \ref{crysmixed}, it would be enough to know that the convergent cohomology group $D_\cris(\V)$ sits in exact sequences analogous to the standard Gysin sequences (\ref{gysin}).  These sequences hold in any cohomology theory satisfying the Bloch-Ogus axioms, but unfortunately convergent cohomology is not known to satisfy these axioms.  On the other hand, rigid cohomology does satisfy the Bloch-Ogus axioms \cite{petrequin}.  So we apply Shiho's log convergent-rigid comparison isomorphism \cite[2.4.4]{shiho} to identify $D_\cris(\V)$ with $H^1_\rig(Y - S_s \rel T_s, j^\dagger \E)$, for a certain overconvergent isocrystal $j^\dagger\E$ which is the analogue of $j_{0*}\AA$ on the special fiber.  Here $S_s$ and $T_s$ are the points on the special fiber.  We have similar identifications with rigid cohomology for each term appearing in the sequences (\ref{gysin}), and the corresponding short exact sequences of rigid cohomology groups are exact.  The crystallinity of $\V$ now follows from dimension counting.  
\end{proof}

\begin{remark}
Theorem \ref{vanishing} has two components: first one must bound denominators and then one shows that the heights go to 0 $p$-adically.  In the argument above, the ordinarity of $f$ was the crucial input needed to bound denominators.  We briefly explain the modifications need to fix the proof of \cite[II.5.9]{Nek}, where one pairs Heegner cycles of $p$-power conductor with cycles in the kernel of the local Abel-Jacobi map (the higher weight analogue of principal divisors).  The fact that these cycles are Abel-Jacobi trivial allows us to make a ``bounded denominators" argument even without an ordinarity assumption; see \cite[II.1.9]{Nek}.  To kill the $p$-adic height, we further note that the particular AJ-trivial cycles in the proof of II.5.9 are again linear combinations of various $T_n x$, with $r(n) = 0$.  This lets us invoke Lemme \ref{disjoint} and Theorem \ref{crysmixed}, as before.     
\end{remark}

As we alluded to in the proof of Theorem \ref{vanishing}, the proof of \cite[II.5.11]{Nek} again assumes that $H_\infty$ contains the cyclotomic $\Z_p$-extension of $\Q_p$.  To fix the proof there, it is enough to prove the following proposition.  

\begin{proposition}\label{hodgetate}
Let $V$ be the Galois representation $H^1_\et(\bar X_0(N),j_{0*}\AA)(1)$ attached to weight $2r$ cusp forms.  If we set $H_{\infty} = \bigcup_j H_{j,w}$, then \[H^0(H_\infty, V) = 0.\]  
\end{proposition}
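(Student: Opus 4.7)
The plan is to argue by contradiction. Suppose $V^{G_{H_\infty}} \neq 0$. Since $V$ decomposes (over a sufficiently large extension of $\Q_p$) under the action of the Hecke algebra into pieces $V_f(1)$ associated to weight-$2r$ newforms $f$ on $\Gamma_0(N)$, I may reduce to deriving a contradiction from $V_f(1)^{G_{H_\infty}} \neq 0$ for a single $f$. Any nonzero $H_\infty$-invariant vector in $V_f(1)$ generates a $G_L$-subrepresentation $W \subset V_f(1)|_{G_L}$ on which $G_L$ acts through the compact abelian quotient $\Gal(H_\infty/L) \cong \Z_p^\times$ (using $H_\infty = L_\xi$ from Proposition~\ref{unif}). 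Hence $W$ is semisimple, and in particular contains a one-dimensional $G_L$-stable line on which $G_L$ acts through a crystalline character $\chi'$ factoring through $\Gal(H_\infty/L)$.

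Next I identify $\chi'$ using the ordinary filtration on $V_f(1)|_{G_L}$ supplied by Theorem~\ref{ord}. This filtration shows that the Jordan--H\"older constituents of $V_f(1)|_{G_L}$ are
\[ \beta := \epsilon_2 \chi_\cyc, \qquad \alpha := \epsilon_2^{-1} \chi_\cyc^{2r}, \]
where $\epsilon_2$ is unramified with $\epsilon_2(\Frob_p)$ equal to the unit root $\alpha_p(f)$ of $x^2 - a_p(f)x + p^{2r-1}$. Thus $\chi'$ must equal either $\alpha$ or $\beta$.

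The heart of the argument is to rule this out by comparing archimedean absolute values of Frobenius eigenvalues on $D_\cris^L$. Since $\xi = \pi/\bar\pi$ is the ratio of complex conjugate algebraic numbers in $K$, one has $|\xi|_\infty = 1$. By the classification of continuous characters of $\Z_p^\times$ combined with integrality of Hodge--Tate weights (Sen), every crystalline character of $G_L$ factoring through $\Gal(H_\infty/L) \cong \Z_p^\times$ is of the form $\eta \cdot \chi_\xi^n$ for some finite-order $\eta$ and some $n \in \Z$; by Proposition~\ref{LTcrys} its $L$-linear Frobenius eigenvalue on $D_\cris^L$ equals $\eta(\Frob_L)\,\xi^n$, of archimedean absolute value $1$. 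On the other hand, for either $\alpha$ or $\beta$, Deligne's Ramanujan bound $|\alpha_p(f)|_\infty = p^{(2r-1)/2}$ gives an $L$-linear Frobenius eigenvalue on $D_\cris^L$ of complex absolute value $p^{\delta(2r+1)/2}$, strictly greater than $1$ for all $r \geq 1$ and all $\delta \geq 1$. This contradicts $\chi' \in \{\alpha, \beta\}$ factoring through $\Gal(H_\infty/L)$, and completes the proof.

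The main technical point that I will need to pin down carefully is the classification of crystalline characters of $G_L$ factoring through the Lubin--Tate quotient $\Gal(H_\infty/L)$ in the form $\eta \chi_\xi^n$. This reduces to Sen's theorem on integrality of Hodge--Tate weights together with the basic structure of continuous characters of $\Z_p^\times$; but careful book-keeping of Frobenius conventions (semi-linear $\phi$ versus $L$-linear $\phi^\delta$, and the matching of these on both sides of the comparison) is required to apply Proposition~\ref{LTcrys} correctly in the absolute-value computation above.
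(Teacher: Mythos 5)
Your argument is, at its core, the same as the paper's: both proofs reduce to comparing archimedean absolute values of crystalline Frobenius eigenvalues, using Proposition \ref{LTcrys} together with Sen's integrality of Hodge--Tate weights to see that any crystalline character of $G_L$ factoring through $\Gal(H_\infty/L)$ has Frobenius eigenvalue $\eta(\Frob_L)\xi^n$ of absolute value $1$, and then contradicting the fact that the Frobenius eigenvalues of $D_\cris(V)$ are Weil numbers of odd weight. The paper gets there a little more directly, by splitting $H^0(H_\infty,V)$ into $\chi_\xi^j$-isotypic pieces ($j\in\Z$) and embedding each into the eigenspace $D_\cris(V)^{f=\xi^{-j}}$, which makes your detour through the ordinary filtration and its Jordan--H\"older constituents unnecessary (though harmless). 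Two points in your write-up need repair. First, a continuous finite-dimensional $p$-adic representation of a compact abelian group need \emph{not} be semisimple (unipotent representations of $\Z_p$ are counterexamples), so "hence $W$ is semisimple" is false as stated; what you actually need, and what is true, is only that a commuting family of operators has a common eigenvector after extending scalars to $\bar\Q_p$, giving the $G_L$-stable line. Second, the $V$ of the proposition is $H^1(\overline{X_0(N)},j_{0*}\Aa)(1)$ with $\Aa$ containing the factor $\kappa_\ell H^{2k}(\bar A^{2k},\Q_p(k))$, so its Hecke eigencomponents are $V_f\otimes W$ rather than $V_f$, and the one-dimensional constituents you must rule out are the characters $\alpha,\beta$ twisted by the CM characters $\rho_\p^{\pm\ell}(k)$ coming from $A^\ell$. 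Since those CM characters are crystalline of motivic weight zero, their Frobenius eigenvalues have archimedean absolute value $1$ and your comparison survives unchanged, but the reduction step should be stated for $V_f\otimes W$. (The precise exponent $p^{\delta(2r+1)/2}$ is convention-dependent, as you note; the only fact the argument uses is that the total weight is odd, so no eigenvalue can have absolute value $1$, which is exactly how the paper invokes the Weil bounds.)
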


\begin{proof}
We follow Nekov\'{a}\v{r}'s approach, but instead of using the cyclotomic character we use the character $\chi_\xi$ coming from the relative Lubin-Tate group attached to $H_\infty$, defined above.  By Proposition \ref{LTcrys}, the $G_{\Q_p}$-representation $\Q_p(\chi_\xi)$ is crystalline and the frobenius on $D_\cris(\Q_p(\chi_\xi))$ is given by multiplication by $\xi$, where $\xi$ is defined in Proposition \ref{unif}. 

Since $V$ is Hodge-Tate, there is an inclusion of $\Gal(H_\infty/H_v)$-representations  
\[H^0(H_\infty, V) \subset \oplus_{j \in \Z} H^0(H_v, V(\chi_\xi^j))(\chi_\xi^{-j}).\]
Indeed, $H^0(H_\infty, V)$ has an action by $\Gal(H_\infty /H)$ which we can break up into isotypic parts indexed by characters $\chi_\xi^s$, with $s \in \Z_p$.  But of these characters, the only ones which are Hodge-Tate are those with $s \in \Z$, so we obtain the inclusion above.    

So it suffices to show that for each $j$, $H^0(H_v, V(\chi_\xi^j))(\chi_\xi^{-j}) = 0$.  Tensoring the inclusion $\Q_p \to B_\cris^{f = 1}$ by $V(\chi_\xi^j)$, taking invariants, and then twisting the resulting filtered frobenius modules by $\chi_\xi^{-j}$, we obtain 
\[H^0(H_v, V(\chi_\xi^j))(\chi_{\xi}^{-j}) \subset D_\cris(V)^{f = \xi^{-j}}.\]
As an element of $\C$, $\xi$ has absolute value 1.  Since $V$ appears in the odd degree cohomology of the Kuga-Sato variety, \cite{KM} implies that $D_\cris(V)^{f = \xi^{-j}}$ vanishes and the proposition follows.  
\end{proof}

Finally, for completeness, we explain how Proposition \ref{hodgetate} is used in the proof of Proposition \ref{localvanishing}.  Let $X$ be the (generalized) Kuga-Sato variety over $H_v$ and let $T$ be the image of the map $$H^{2r + 2k  -1}(\bar X, \Z_p(r+k)) \to V = H^{2r +2k-1}(\bar X, \Q_p(r+k)).$$
Proposition \ref{hodgetate} is used to infer the following fact, whose proof was left to the reader in \cite{Nek}. 

\begin{proposition}
The numbers $\#H^1(H_{j,w}, T)_\tors$ are bounded as $j \to \infty$.   
\end{proposition}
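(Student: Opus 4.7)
The plan is to bound $\#H^1(H_{j,w},T)_\tors$ uniformly in $j$ by relating it to $(V/T)^{G_{j,w}}$ and exploiting the vanishing $V^{G_{H_\infty}} = 0$ furnished by the routine extension of Proposition~\ref{hodgetate} to the present $V$. From the short exact sequence $0 \to T \to V \to V/T \to 0$ the Galois cohomology long exact sequence supplies
\[0 \to T^{G_{j,w}} \to V^{G_{j,w}} \to (V/T)^{G_{j,w}} \xrightarrow{\delta} H^1(H_{j,w},T) \to H^1(H_{j,w},V).\]
Because $V$ is a $\Q_p$-vector space, $H^1(H_{j,w},V)$ is torsion-free, so the kernel of the rightmost arrow is precisely $H^1(H_{j,w},T)_\tors$. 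Since $G_{H_\infty} \subset G_{j,w}$, the vanishing $V^{G_{H_\infty}}=0$ forces $V^{G_{j,w}}=0$; hence $\delta$ is injective and induces a canonical isomorphism $H^1(H_{j,w},T)_\tors \cong (V/T)^{G_{j,w}}$. It thus suffices to bound $\#(V/T)^{G_{j,w}}$ independently of $j$.

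The groups $G_{j,w}$ decrease with $j$ and satisfy $\bigcap_j G_{j,w} = G_{H_\infty}$, so the subgroups $(V/T)^{G_{j,w}}$ form an increasing chain inside $(V/T)^{G_{H_\infty}}$. It is therefore enough to show that $(V/T)^{G_{H_\infty}}$ is finite. Writing $d = \dim_{\Q_p} V$, the discrete module $V/T$ is isomorphic to $(\Q_p/\Z_p)^d$ and its Tate module is canonically identified with $T$. Consequently, the Tate module of the subgroup $(V/T)^{G_{H_\infty}} \subset V/T$ equals $T^{G_{H_\infty}} \subset V^{G_{H_\infty}} = 0$. Thus $(V/T)^{G_{H_\infty}}$ has trivial maximal divisible subgroup; since its $p$-torsion is contained in $(V/T)[p] \cong (\Z/p\Z)^d$, the structure theorem for $p$-primary abelian groups (any group satisfying these two conditions is a finite direct sum of cyclic $p$-groups) forces it to be finite.

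The principal (minor) obstacle is the promised extension of Proposition~\ref{hodgetate} from $V_{r,A,\ell}$ to the present $V = H^{2r+2k-1}(\bar X, \Q_p(r+k))$. The argument there is formal from two inputs: $V$ must be Hodge--Tate, and the frobenius eigenvalues on $D_\cris(V)$ must avoid the values $\xi^{-j}$ for $j \in \Z$. Both are available in our setting. Faltings' comparison theorem gives that $V$ is crystalline (hence Hodge--Tate) since $X$ has good reduction above $p$, while the Weil conjectures for the generalized Kuga--Sato variety show that every frobenius eigenvalue on $D_\cris(V)$ has complex absolute value $p^{(2r+2k-1)/2 - (r+k)} = p^{-1/2}$. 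Since $\xi = \pi/\bar\pi \in \Z_p^\times$ is a unit, the powers $\xi^{-j}$ all have complex absolute value $1$, so $D_\cris(V)^{\phi = \xi^{-j}} = 0$ for every $j \in \Z$, and the argument of Proposition~\ref{hodgetate} goes through verbatim.
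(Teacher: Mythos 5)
Your proof is correct and follows essentially the same route as the paper: both reduce the question to $(V/T)^{G_j}$ via the long exact sequence and torsion-freeness of $H^1(H_{j,w},V)$, pass to the fixed field $H_\infty$, and invoke $H^0(H_\infty,V)=0$. The only cosmetic difference is that where the paper bounds the exponent of $(V/T)^{G_j}$ by an explicit compactness/subsequence argument in $T$, you package the same content as the identification of the Tate module of $(V/T)^{G_{H_\infty}}$ with $T^{G_{H_\infty}}=0$; your closing observation that Proposition \ref{hodgetate} must be extended from the piece $H^1(\bar X_0(N), j_{0*}\Aa)(1)$ to the full $H^{2r+2k-1}(\bar X,\Q_p(r+k))$ is a legitimate point that the paper glosses over, and your Weil-conjecture justification for it is the right one.
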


\begin{proof}
From the short exact sequence
$$0 \to T \to V \to V/T \to 0,$$
we have $$(V/T)^{G_j} \to H^1(G_j, T) \to H^1(G_j, V) \to 0,$$ where $G_j = \Gal(\bar H_{j,w}/H_{j,w}).$  As $H^1(G_j, V)$ is torsion-free, we see that $(V/T)^{G_j}$ maps surjectively onto $H^1(G_j,T)_\tors$.  An element of order $p^a$ in $(V/T)^{G_j}$ is of the form $p^{-a}t$ for some $t \in T$ not divisible by $p$ in $T$.  We then have $\sigma t - t \in p^a T$ for all $\sigma \in G_j$.  As $V/T \cong (\Q_p/\Z_p)^n$ for some integer $n$, it suffices to show that $a$ is bounded as we vary over all elements of $(V/T)^{G_j}$ and all $j$.  

Suppose these $a$ are not bounded.  Then we can find a sequence $t_i \in T$ such that $t_i \notin pT$ and such that $\sigma t_i - t_i \in p^{a(i)}T$ for all $\sigma \in G_\infty := \Gal(\bar H/H_\infty)$.  Here, $a(i)$ is a non-decreasing sequence going to infinity with $i$.  Since $T$ is compact we may replace $t_i$ with a convergent subsequence, and define $t = \lim t_i$.  We claim that $t \in H^0(H_\infty, V)$.  Indeed, for any $i$ we have  
\[\sigma t - t = \sigma(t  - t_i) - (t - t_i) + \sigma t_i - t_i.\]
For any $n > 0$, we can choose $i$ large enough so that $(t - t_i) \in p^nT$ and $\sigma t_i - t_i \in p^nT$, showing that $\sigma t  = t$.  By Proposition \ref{hodgetate}, $t = 0$, which contradicts the fact that $t = \lim t_i$ and $t_i \not \in pT$.   
\end{proof}

% BIBLIOGRAPHY

\end{document}